\pdfoutput=1 % for arXiv to use pdflatex instead of latex+dvipdf

  %% Language: British English
  %% Bibliography: Whenever possible bib-entries from MathSciNet with URL entry
  %%
  %% Hyperlinks: There are hyperlinks from inside \cite[...] to various tables
  %% in the appendix; the hypertarget there is defined using a ``hack''
  %% documented below (see \usepackage{hyperref})

\documentclass[a4paper,twoside]{article}

\usepackage{amsmath}       %
\usepackage{amssymb}       %
\usepackage{amsthm}        %
\usepackage{mathscinet}    %
\usepackage{mathrsfs}      % for calligraphic characters for Hausdorff measure
\usepackage{microtype}     % better typesetting
\usepackage{tabularx}      % better tables (in terms of column widths)
\usepackage[T1]{fontenc}   % use T1 fonts (in particular, 8 bit encoding)
                           % proper LaTeX hyphenation,
                           % copy-and-paste support in the resulting PDF
\usepackage{lmodern}       % Latin Modern font (refinement of Computer Modern)
                           % preferred over the default (in case of T1
                           % encoding) cm-super -- consider result for ``\ss''
\usepackage[utf8]{inputenc}% allow direct input of special characters
\usepackage{enumitem}      % for different enumeration styles

% hyperref
\usepackage{hyperref}
   % The usage of \hypertarget is modified according to the ``hack'' described
   % in
   % <http://tex.stackexchange.com/questions/17057/hypertarget-seems-to-aim-a-line-too-low>
   % as otherwise the target is ``a line too low''.

\hyphenation{Lipschitzian}

% "empty" theorem enviroment for use in the introduction
\newtheoremstyle{citing}% name
  {3pt}%      Space above, empty = `usual value'
  {3pt}%      Space below
  {\itshape}% Body font
  {}%         Indent amount (empty = no indent, \parindent = para indent)
  {\bfseries}% Thm head font
  {.}%        Punctuation after thm head
  {.5em}%     Space after thm head: " " = normal interword space;
        %       \newline = linebreak
  {\thmnote{#3}}% Thm head spec
\theoremstyle{citing}
\newtheorem*{citing}{}

% theorem, lemma, corollary, remark, example and definition as usual
% also, intro_problem to describe the problem dealt with in the introduction
% and miniremark as environment without name to gather assumptions and the
% like

\swapnumbers

\theoremstyle{plain}
\newtheorem{theorem}{Theorem}[section]
\newtheorem{lemma}[theorem]{Lemma}
\newtheorem{corollary}[theorem]{Corollary}

\theoremstyle{remark}
\newtheorem{remark}[theorem]{Remark}
\newtheorem{example}[theorem]{Example}
\newtheorem*{question}{Questions}

\theoremstyle{definition}
\newtheorem*{intro_definition}{Definition}
\newtheorem{definition}[theorem]{Definition}
\newtheorem{miniremark}[theorem]{}

% some counters
\newcounter{counter1}
\newcounter{counter2}

%%%%%%%%%%%%%%%%%%%%%%%%%%%%%%%%%%%%%%%%%%%%%%%%%%%%%%%%%%%%%%%%%%%%%%%%%%%%%%%%
% New commands                                                                 %
%%%%%%%%%%%%%%%%%%%%%%%%%%%%%%%%%%%%%%%%%%%%%%%%%%%%%%%%%%%%%%%%%%%%%%%%%%%%%%%%

%%%%% Spaces and symbols with fixed meaning %%%%%

% Commands for varifolds
\newcommand{\Var}{\mathbf{V}}     % general varifolds
\newcommand{\IVar}{\mathbf{IV}}   % integral varifolds

% The Lebesgue space, '\Lp{p}' produces "L_p" with bold L
\newcommand{\Lp}[1]{\mathbf{L}_{#1}}

% the real numbers
\newcommand{\rel}{\mathbf{R}}

% the Grassmannian '\grass{n}{m}' produces "G(n,m)" with bold "G", i.e. the
% set of m dimensional planes in R^n
\newcommand{\grass}[2]{\mathbf{G}(#1,#2)}

% The two standard projections 'p' and 'q'
\newcommand{\Mypp}{\mathbf{p}}
\newcommand{\Myqq}{\mathbf{q}}

% the open ball: '\oball{a}{r}' produces "U(a,r)" with bold "U"
\newcommand{\oball}[2]{\mathbf{U}(#1,#2)}

% the closed ball: '\cball{a}{r}' produces "B(a,r)" with bold "B"
\newcommand{\cball}[2]{\mathbf{B}(#1,#2)}

% the density function for measures
\newcommand{\density}{\boldsymbol{\Theta}}

% measure of the n dimensional unit ball, '\unitmeasure{n}' produces
% "\alpha(n)" with bold alpha
\newcommand{\unitmeasure}[1]{\boldsymbol{\alpha}(#1)}

% The identity mapping on A, '\id{A}', produces "1_A" with bold 1
\newcommand{\id}[1]{\mathbf{1}_{#1}}

% the upright "d" in integrals
\newcommand{\ud}{\,\mathrm{d}}

%%%%% Shortcuts %%%%%

% classification { x : properties }, : is '\with'
\DeclareMathOperator{\with}{:}

% turning a plane into a projection, i.e. obtain '\project{T}' produces
% "T_\natural", the projection associated with the plane T
\newcommand{\project}[1]{#1_\natural}

% similar for the orthogonal complement of the plane T
\newcommand{\perpproject}[1]{#1_\natural^\perp}

% The image of a set A under a map f is "f[A]" to be distinguished from "f(x)"
% for elements x of the domain. 'f \lIm A \rIm' is used for this purpose, lIm
% = left Image, rIm = right image.
\newcommand{\lIm}{[}
\newcommand{\rIm}{]}

% the dimension of the varifold
\newcommand{\vdim}{{m}}
% the codimension of the varifold
\newcommand{\codim}{{n-m}}
% the dimension of the ambient space
\newcommand{\adim}{{n}}

% In case an enumeration has to be interrupted
\newcommand{\intertextenum}[1]{\setcounter{counter2}{\value{enumi}}\end{enumerate}#1\begin{enumerate}\setcounter{enumi}{\value{counter2}}}

% enumerations go with arabic numbers

% for Roman numbers
\newcommand{\printRoman}[1]{\setcounter{counter1}{#1}\Roman{counter1}}

% textstyle integral signs
\newcommand{\tint}[2]{{\textstyle\int_{#1}^{#2}}}

% textstyle sum sign
\newcommand{\tsum}[2]{{\textstyle\sum_{#1}^{#2}}}

%%%%% Operators %%%%%

% The measure of a ball, '\measureball{\mu}{\oball{a}{r}}' produces '\mu
% U(a,r)', i.e. no parentheses for measure of balls for readability
\newcommand{\measureball}[2]{{#1}\,{#2}}

\DeclareMathOperator{\without}{\sim}% complement of a set
\DeclareMathOperator{\restrict}{\llcorner}
                                    % restriction of measure to a set
\DeclareMathOperator{\Clos}{Clos}   % closure
\DeclareMathOperator{\Tan}{Tan}     % tangent space
\DeclareMathOperator{\spt}{spt}     % support
\DeclareMathOperator{\im}{im}       % image
\DeclareMathOperator{\Lip}{Lip}     % Lipschitz constant
\DeclareMathOperator{\grad}{grad}   % grad
\DeclareMathOperator{\dmn}{dmn}     % domain
\DeclareMathOperator{\dist}{dist}   % distance
\DeclareMathOperator{\Hom}{Hom}     % Vectorspace of homomorphisms
\DeclareMathOperator{\Der}{D}       % derivative
\DeclareMathOperator{\pt}{pt}       % pointwise
\DeclareMathOperator{\ap}{ap}       % approximate

% The following definition is taken from the AMS BibTeX entry for MR1376656
\def\polhk#1{\setbox0=\hbox{#1}{\ooalign{\hidewidth
\lower1.5ex\hbox{`}\hidewidth\crcr\unhbox0}}}

\begin{document}

%%%%%%%%%%%%%%%%%%%%%%%%%%%%%%%%%%%%%%%%%%%%%%%%%%%%%%%%%%%%%%%%%%%%%%%%%%%%%%

\title{Pointwise differentiability of higher order for sets}
\author{Ulrich Menne}
\date{\today}
\maketitle
\begin{abstract}
	The present paper develops two concepts of pointwise differentiability
	of higher order for arbitrary subsets of Euclidean space defined by
	comparing their distance functions to those of smooth submanifolds.
	Results include that differentials are Borel functions, higher order
	rectifiability of the set of differentiability points, and a
	Rademacher result.  One concept is characterised by a limit procedure
	involving inhomogeneously dilated sets.

	The original motivation to formulate the concepts stems from studying
	the support of stationary integral varifolds.  In particular, strong
	pointwise differentiability of every positive integer order is shown
	at almost all points of the intersection of the support with a given
	plane.
\end{abstract}
\paragraph{MSC-classes 2010}
	51M05 (Primary); 26B05, 49Q20 (Secondary).

\paragraph{Keywords}
	Higher order pointwise differentiability $\cdot$ rectifiability
	$\cdot$ Rademacher-Stepanov type theorem $\cdot$ stationary integral
	varifold.

%%%%%%%%%%%%%%%%%%%%%%%%%%%%%%%%%%%%%%%%%%%%%%%%%%%%%%%%%%%%%%%%%%%%%%%%%%%%%%

\section{Introduction}

\emph{Suppose throughout the introduction that $k$ and $\adim$ are positive
integers, $0 \leq \alpha \leq 1$, $\gamma = k$ if $\alpha = 0$, and $\gamma =
(k,\alpha)$ if $\alpha > 0$.} Before outlining the characterisations and the
differentiability theory for pointwise differentiability of higher order for
sets in Subsections~\ref{subsect:sets_def} and~\ref{subsect:sets_theory}, the
notation is introduced in Subsection \ref{subsect:notation} and the simpler
case of functions is reviewed in Subsection~\ref{subsect:functions}.  The
results on varifolds are summarised in Subsection~\ref{subsect:varifolds}.

\subsection{Notation} \label{subsect:notation}

The notation is consistent with Federer \cite[pp.~669--671]{MR41:1976}.
Therefore the term ``class~$\gamma$'' is used instead of the more common term
``class~$\mathscr{C}^\gamma$'', see \ref{def:k_alpha} and
\ref{def:submanifold}.

Additionally, the following definitions are made.  If $x \in \rel^\adim$
and $A \subset \rel^\adim$, then
\begin{equation*}
	\dist (x,A) = \inf \{ |x-a| \with a \in A \}
\end{equation*}
denotes the \emph{distance of $x$ to $A$}.  Suppose $\vdim$ is an integer with
$0 \leq \vdim \leq \adim$.  If $S$ is an $\vdim$~dimensional subspace
of~$\rel^\adim$, then $\project S$ denotes the orthogonal projection
of~$\rel^\adim$ onto~$S$ with $\project S | S = \id S$ and $S^\perp = \ker
\project S$ denotes the \emph{orthogonal complement of $S$ in $\rel^\adim$}.%
\begin{footnote}
	{Whenever $A$ is a set $\id A$ denotes the identity map of $A$, see
	\cite[p.~669]{MR41:1976}.}
\end{footnote}
The Grassmann manifold $\grass \adim \vdim$ of $\vdim$~dimensional subspaces
of~$\rel^\adim$ is topologised by its injection into $\Hom ( \rel^\adim,
\rel^\adim )$ which maps $S \in \grass \adim \vdim$ onto $\project S$.%
\begin{footnote}
	{Equivalently, the topology on $\grass \adim \vdim$ is characterised
	by the requirement that $\grass \adim \vdim$ becomes a homogeneous
	space through the canonical transitive left action of the orthogonal
	group $\mathbf{O}(n)$ on $\grass \adim \vdim$, see
	\cite[\hyperlink{2_7_1}{2.7.1},
	\hyperlink{3_2_28}{3.2.28}\,(2)\,(4)]{MR41:1976}.}
\end{footnote}
Moreover, the image of a set $A$ under a relation $f$ is denoted by
\begin{equation*}
	f \lIm A \rIm = \{ y \with \text{$(x,y) \in f$ for some $x \in A$} \},
\end{equation*}
see Kelley \cite[p.~8]{MR0370454}.  In this regard all relations, in
particular functions, are considered as subsets of $\dmn f \times \im f$, the
product of their domain and image.

Finally, concerning varifolds, the notation is consistent with Allard
\cite{MR0307015}.

\subsection{Higher order differentiability theory for functions}
\label{subsect:functions}

\begin{intro_definition} [classical, see
	\ref{miniremark:k_jet_for_class_gamma} and \ref{def:pt_diff_fct}]

	Suppose $\vdim$ is an integer, $0 < \vdim < \adim$, $f : \rel^\vdim
	\to \rel^\codim$, and $a \in \rel^\vdim$.

	Then $f$ is termed \emph{pointwise differentiable of order~$\gamma$ at
	$a$} if and only if there exists a polynomial function $P : \rel^\vdim
	\to \rel^\codim$ of degree at most $k$ such that%
	\begin{footnote}
		{The symbol $\cball ar$ denotes the closed ball with centre $a$
		and radius $r$, see \cite[2.8.1]{MR41:1976}.}
	\end{footnote}
	\begin{gather*}
		\lim_{r \to 0+} r^{-k} \sup \{ | f(x)-P(x) | \with x \in
		\cball ar \} = 0 \quad \text{if $\alpha = 0$}, \\
		\limsup_{r \to 0+} r^{-k-\alpha} \sup \{ | f(x)-P(x) | \with x
		\in \cball ar \} < \infty \quad \text{if $\alpha > 0$}.
	\end{gather*}
	In this case $P$ is unique and the \emph{pointwise differentials of
	order $i$ of $f$ at $a$} are defined by
	\begin{equation*}
		\pt \Der^i f (a) = \Der^i P (a) \quad \text{for $i = 0,
		\ldots, k$}.
	\end{equation*}
\end{intro_definition}

These differentials are also called ``$i$th Peano derivatives'', see Zibman
\cite{MR508884}.

The case $(k,\alpha) = (1,0)$ corresponds to classical differentiability.  If
$\codim = 1$ and $f$ is convex, then $f$ is pointwise differentiable of
order~$2$ at $\mathscr{L}^\vdim$ almost all~$a$ by Alexandrov's theorem, see
for instance \cite[Theorem~6.9]{MR3409135}.  Pointwise differentiability of
order~$2$ also plays an important role in the study of viscosity solutions to
nonlinear elliptic equations, see for instance Trudinger \cite{MR995142} and
Caffarelli, Crandall, Kocan, and {\'S}wi{\polhk{e}}ch \cite{MR1376656}.
Examples of arbitrary order of differentiability may be obtained from
Rešetnjak's differentiability result for Sobolev functions, see
\cite{MR0225159-english},%
\begin{footnote}
	{The Russian original is \cite{MR0225159}.}
\end{footnote}
in conjunction with embedding theorems into continuous functions (see O'Neil
\cite{MR0146673} and Stein \cite{MR607898} for related sharp results).

The present development of a higher order differentiability theory of sets
aims at generalising the following theorem concerning functions.  The latter
is readily deduced from known results and included here for expository
reasons.

{ \hypertarget{A}{}
\begin{citing} [Theorem~A, see \ref{thm:pt_diff_functions}]
	Suppose $\vdim$ is an integer, $0 < \vdim < \adim$, $f : \rel^\vdim
	\to \rel^\codim$, and $X$ is the set of $a \in \rel^\vdim$ at which
	$f$ is pointwise differentiable of order~$\gamma$ at $a$.

	Then the following four statements hold.
	\begin{enumerate}
		\item \label{item:intro_thm:borel} The functions $\pt \Der^i
		f$ are Borel functions whose domains are Borel subsets of
		$\rel^\vdim$ for $i = 0, \ldots, k$ and $X$ is a Borel subset
		of $\rel^\vdim$.
		\item \label{item:intro_thm:rect} There exists a sequence of
		functions $g_j : \rel^\vdim \to \rel^\codim$ of class~$\gamma$
		such that $\mathscr{L}^\vdim \big ( X \without
		\bigcup_{j=1}^\infty \{ x \with f(x) = g_j(x) \} \big ) = 0$.%
		\begin{footnote}
			{The symbol $\mathscr{L}^\vdim$ denotes the $\vdim$
			dimensional Lebesgue measure, see
			\cite[2.6.5]{MR41:1976}.}
		\end{footnote}
		\item \label{item:intro_thm:diff} If $g : \rel^\vdim \to
		\rel^\codim$ is of class~$\gamma$ and $Y = \{ y \with
		f(y)=g(y) \}$, then
		\begin{gather*}
			\pt \Der^i f (a) = \Der^i g(a) \quad \text{for $i = 0,
			\ldots, k$}, \\
			\lim_{r \to 0+} r^{-k-\alpha} \sup \{ | f(x)-g(x)|
			\with x \in \cball ar \} = 0
		\end{gather*}
		at $\mathscr{L}^\vdim$ almost all $a \in X \cap Y$.
		\item \label{item:intro_thm:rademacher} If $\alpha = 1$, then
		$f$ is pointwise differentiable of order~$k+1$ at
		$\mathscr{L}^\vdim$ almost all $a \in X$.
	\end{enumerate}
\end{citing} }

For differentiability in Lebesgue spaces $\Lp p ( \mathscr{L}^\vdim,
\rel^\codim )$ with $1 < p \leq \infty$ similar results to Theorem~\hyperlink
AA were developed by Calder{\'o}n and Zygmund, see \cite[Theorems 5, 9, 10,
13]{MR0136849}.  The present proof of Theorem \hyperlink AA mainly relies on a
characterisation of almost everywhere approximate differentiability of
order~$\gamma$, see Isakov \cite{MR897693-english},%
\begin{footnote}
	{The Russian original is \cite{MR897693}.}
\end{footnote}
and some techniques from \cite{MR41:1976}.  A more detailed description of its
proof will be given jointly with that of Theorem~\hyperlink BB for sets below.

\subsection{Defining and characterising higher order pointwise
differentiability for sets} \label{subsect:sets_def}

For sets the first concept of pointwise differentiability is defined as
follows.

\begin{intro_definition} [see \ref{def:diff_sets},
	\ref{remark:same_tangent_cone}, \ref{miniremark:cone}, and
	\ref{thm:eq_diff_sets}]

	Suppose $A \subset \rel^\adim$ and $a \in \rel^\adim$.

	Then $A$ is termed \emph{strongly pointwise differentiable of
	order~$\gamma$ at $a$} if and only if there exist an integer $\vdim$
	with $0 \leq \vdim \leq \adim$, $S \in \grass \adim \vdim$, and a
	polynomial function $P : S \to S^\perp$ of degree at most $k$ such
	that $B = \{ \chi + P(\chi) \with \chi \in S \}$ satisfies $a \in B$
	and
	\begin{gather*}
		\lim_{r \to 0+} r^{-k} \sup \{ | \dist (x,A) - \dist (x,B) |
		\with x \in \cball ar \} = 0 \quad \text{if $\alpha = 0$}, \\
		\limsup_{r \to 0+} r^{-k-\alpha} \sup \{ | \dist (x,A) - \dist
		(x,B) | \with x \in \cball ar \} < \infty \quad \text{if
		$\alpha > 0$}.
	\end{gather*}
\end{intro_definition}

In this case $a$ belongs to the closure of $A$, $\Tan(A,a) = \Tan (B,a)$,%
\begin{footnote}
	{The tangent cone $\Tan (A,a)$ consists of all $v \in \rel^\adim$ such
	that for $\varepsilon > 0$ there exist $x \in A$ and $0 < r < \infty$
	such that $|x-a| < \varepsilon$ and $|r(x-a)-v| < \varepsilon$, see
	\cite[3.1.21]{MR41:1976}.  In set-valued analysis this cone is called
	``contingent cone'' of $A$ at $a$, see \cite[4.1.1]{MR2458436}.}
\end{footnote}
$\vdim$ is determined by $A$ and $a$, a plane $S \in \grass \adim \vdim$ is
admissible in the definition if and only if $S^\perp \cap \Tan (A,a) = \{ 0
\}$, and $P$ is determined by $k$, $A$, and $(a,S)$, see
\ref{remark:same_tangent_cone}, \ref{miniremark:cone}, and
\ref{thm:eq_diff_sets}.  Unlike in the case of functions, $\vdim$ may depend
on~$a$, and $\project S (a)$ need not to belong to the interior of $\project S
\lIm A \rIm$ relative to~$S$.

All these remarks also hold with respect to the following weaker
differentiability requirement which treats the sets $A$ and $B$ in an
asymmetric manner.

\begin{intro_definition} [see \ref{def:diff_sets} and
	\ref{def:definition_pt_differential_sets}]

	Suppose $A \subset \rel^\adim$ and $a \in \rel^\adim$.

	Then $A$ is termed \emph{pointwise differentiable of order~$\gamma$ at
	$a$} if and only if there exist an integer $\vdim$ with $0 \leq \vdim
	\leq \adim$, $S \in \grass \adim \vdim$, and a polynomial function $P
	: S \to S^\perp$ of degree at most $k$ such that $B = \{ \chi +
	P(\chi) \with \chi \in S \}$ satisfies $a \in B$ and
	\begin{gather*}
		\lim_{r \to 0+} r^{-1} \sup \{ | \dist (x,A) - \dist (x,B) |
		\with x \in \cball ar \} = 0, \\
		\lim_{r \to 0+} r^{-k} \sup \{ \dist (x,B) \with x \in A \cap
		\cball ar \} = 0 \quad \text{if $\alpha = 0$}, \\
		\limsup_{r \to 0+} r^{-k-\alpha} \sup \{ \dist (x,B) \with x
		\in A \cap \cball ar \} < \infty \quad \text{if $\alpha > 0$}.
	\end{gather*}
	The \emph{pointwise differential $\pt \Der^k A$ of order~$k$ of~$A$}
	is the function whose domain is the set of $(a,S)$ such that these
	conditions are satisfied for some $\vdim$ and $P$ with $\alpha = 0$
	and whose value at such $(a,S)$ equals $\Der^k ( P \circ \project S )
	(a)$.
\end{intro_definition}

Requiring $S = \Tan (A,a)$, one could define an equivalent notion of pointwise
differentials of higher order whose domains are subsets of $\rel^\adim$, see
\ref{remark:uniqueness_derivative_sets}.  The present definition is chosen
for notational effectiveness.

The motivation for the asymmetric definition is that in case $\alpha = 0$ it
may be characterised by an inductive procedure considering pointwise limits of
the distance functions associated to inhomogeneously dilated sets, suitably
subtracting homogeneous polynomial functions of smaller degree at each step,
see \ref{thm:inductive_blowup}.  In particular, if $A$ is pointwise
differentiable of order $1$ at $a$ and $S = \Tan (A,a)$, then pointwise
differentiability of order $2$ of $A$ at $a$ is equivalent to the requirement
that for some homogeneous polynomial function $Q : S \to S^\perp$ of degree
$2$, the sets $A_s = \big \{ s^{-1} \project S (\chi-a) + s^{-2} \perpproject
S (\chi-a) \with \chi \in A \big \}$ and $B = \{ \chi + Q ( \chi ) \with \chi
\in S \}$ satisfy
\begin{equation*}
	\lim_{s \to 0+} \dist (x,A_s) = \dist (x,B) \quad \text{for $x \in
	\rel^\adim$},
\end{equation*}
see \ref{remark:equiv_convergence_sets} for alternate formulations of the last
condition.  Therefore pointwise differentiability of order~$2$ corresponds to
``twice differentiability'' as defined by the author in
\cite[p.~2253]{snulmenn.mfo1230}.  Similar inhomogeneous dilations of
order~$2$ with weak convergence of the correspondingly restricted Hausdorff
measures to ``approximate tangent paraboloids'' occur in Anzellotti and
Serapioni \cite[\S3]{MR1285779}.

Basic examples of sets which are $\mathscr{H}^\vdim$ almost everywhere
strongly pointwise differentiable of order~$2$ are relative boundaries of
$\vdim+1$ dimensional convex subsets of $\rel^\adim$, see
\ref{example:alexandrov}.  However, the main motivation of the author to study
higher order differentiability theory of sets is given by sets for which no
local graphical representation is available.

In order to characterise the preceding two concepts, one may assume
\begin{align*}
	& A \subset \big \{ \chi \with | \perpproject S (\chi-a) | \leq \kappa |
	\project S (\chi-a) | \big \} \\
	& \qquad \text{for some integer $\vdim$ with $0 \leq \vdim \leq
	\adim$, $S \in \grass \adim \vdim$, and $0 \leq \kappa < \infty$}
\end{align*}
by \ref{miniremark:cone}.  In this case both pointwise differentiability and
strongly pointwise differentiability are characterised in terms of vertical
closeness of order~$\gamma$ of~$A$ near~$a$ to a polynomial function $P : S
\to S^\perp$ of degree at most $k$ and the behaviour of the set $\project S
\lIm A \rIm$ near $\project S(a)$, see \ref{thm:eq_diff_sets}.  The difference
of the two concepts is solely given by the degree to which $\project S \lIm A
\rIm$ is required to cover~$A$ near~$\project S(a)$.  This characterisation
provides the basic link between the differentiability theory of sets and that
of functions on which the further development rests.

It would also appear natural to investigate whether the regularity conditions,
pointwise and strong pointwise differentiability of higher order, can be
characterised in terms of the behaviour of the various higher order tangent
sets of set-valued analysis designed to capture higher order tangential
behaviour of possibly irregular sets, see \cite[\S\,4.7]{MR2458436}.

\subsection{Higher order differentiability theory for sets}
\label{subsect:sets_theory}

The next theorem summarises the main results of the present paper on
differentiability theory of sets.  Their formulation is completely analogous
to Theorem~\hyperlink{A}{A} but their proofs are more complex.

{ \hypertarget B{}
\begin{citing} [Theorem~B, see
\ref{thm:diff_order_one}, \ref{thm:derivative_Borel}, and
\ref{thm:rademacher_sets}]
	Suppose $\vdim$ is an integer, $0 \leq \vdim \leq \adim$, $A \subset
	\rel^\adim$, $X$ is the set of $a \in \rel^\adim$ such that $A$ is
	pointwise [strongly pointwise] differentiable of order~$\gamma$ at~$a$
	with $\dim \Tan (A,a) = \vdim$, $Y$ is set of $a \in \rel^\adim$ such
	that $A$ is pointwise differentiable of order~$1$ at~$a$ with
	$\dim \Tan(A,a) = \vdim$, and $\tau : Y \to \grass \adim \vdim$
	satisfies $\tau (a) = \Tan (A,a)$ for $a \in Y$.

	Then the following four statements hold.
	\begin{enumerate}
		\item \label{intro:rademacher_sets:borel} The function $\tau$
		is a Borel function and its domain~$Y$ is a Borel set, the
		functions $\pt \Der^i A$ are Borel functions whose domains are
		Borel sets for $i = 1, \ldots, k$, and the set $X$ is a Borel
		set.
		\item \label{intro:rademacher_sets:approx} There exists a
		countable collection of $\vdim$~dimensional submanifolds
		of class~$\gamma$ of $\rel^\adim$ covering $\mathscr{H}^\vdim$
		almost all of $X$.%
		\begin{footnote}
			{The symbol $\mathscr{H}^\vdim$ denotes the $\vdim$
			dimensional Hausdorff measure, see
			\cite[2.10.2]{MR41:1976}.}
		\end{footnote}
		\item \label{intro:rademacher_sets:closeness} If $B$ is an
		$\vdim$ dimensional submanifold of class~$\gamma$ of
		$\rel^\adim$, then $\mathscr{H}^\vdim$ almost all $a \in B
		\cap X$ satisfy $\pt \Der^i A (a,\cdot) = \pt \Der^i B
		(a,\cdot)$ for $i = 0, \ldots, k$ and
		\begin{gather*}
			\lim_{r \to 0+} r^{-k-\alpha} \sup \{ \dist ( x, B )
			\with x \in A \cap \cball ar \} = 0. \\
			\left [ \lim_{r \to 0+} r^{-k-\alpha} \sup \{ | \dist
			( x, A ) - \dist ( x, B ) | \with x \in \cball ar \} =
			0. \right ]
		\end{gather*}
		\item \label{intro:rademacher_sets:points} If $\alpha = 1$,
		then $A$ is pointwise [strongly pointwise] differentiable of
		order~$k+1$ at $\mathscr{H}^\vdim$ almost all $a \in X$.
	\end{enumerate}
\end{citing} }

In order to prove \hyperlink AA\,\eqref{item:intro_thm:borel} and~\hyperlink
BB\,\eqref{intro:rademacher_sets:borel}, the main task is to prove that the
functions in question are Borel \emph{subsets} of suitable complete, separable
metric spaces.  By a classical result in descriptive set theory, see
\ref{thm:borel_maps}, this then readily yields
\eqref{intro:rademacher_sets:borel}.  This pattern of proof is taken from
\cite[3.1.1]{MR41:1976}.  As the natural domain of the polynomial functions
associated with the differentials of sets, the tangent plane, depends on the
point considered, some additional considerations, see \ref{lemma:convergence}
and \ref{lemma:varying_poly_fcts}, are needed to prove closedness of the
auxiliary sets in the case of sets.

The proofs of~\hyperlink AA\,(2) and~\hyperlink
BB\,\eqref{intro:rademacher_sets:approx} rely on
\eqref{intro:rademacher_sets:borel} and a direct consequence, see
\ref{thm:isakov}, of Isakov's characterisation of functions which are almost
everywhere approximately differentiable of order~$\gamma$, see
\cite{MR897693-english}.  Whereas the case of functions is immediate, the case
of sets requires the construction of an auxiliary function
to which \ref{thm:isakov} can be applied.  The function is constructed
using the countable $\vdim$ rectifiability%
\begin{footnote}
	{A subset of $\rel^\adim$ is called countably $\vdim$ rectifiable if
	and only if it can be covered by the union of a countable family of
	Lipschitzian images of subsets of $\rel^\vdim$, see
	\cite[3.2.14\,(2)]{MR41:1976}.}
\end{footnote}
of $X$, see \ref{thm:diff_order_one}.  It inherits the higher order
differentiability properties of $A$ by \ref{lemma:subset_pt_diff_order_gamma}.

The proofs of~\hyperlink AA\,\eqref{item:intro_thm:diff} and~\hyperlink
BB\,\eqref{intro:rademacher_sets:closeness} rest on a special case of a
differentiability theorem for functions on varifolds from Kolasiński and
the author \cite[4.4]{MR3625810}.  The short proof of the presently
required case is included in \ref{thm:big_O_little_o} for the convenience of
the reader.  In the case of functions, one may directly apply
\ref{thm:big_O_little_o} to the modulus of the difference of the two functions
involved.  In the case of sets, one instead applies that theorem to a sequence
of auxiliary functions constructed so as to encode all necessary information
on the relative position of the sets $A$ and $B$.

Finally, as in Liu \cite[1.6]{MR2414427}, \hyperlink
AA\,\eqref{item:intro_thm:rademacher} and \hyperlink
BB\,\eqref{intro:rademacher_sets:points} follow from the two statements
respectively preceding it in conjunction with a Lusin type approximation of
functions of class~$(k,1)$ by functions of class~$k+1$, see
\cite[3.1.15]{MR41:1976}.

\subsection{An application to stationary integral varifolds}
\label{subsect:varifolds}

The following are two important questions for integral varifolds (a concept of
generalised submanifolds with positive integer multiplicity) which either have
``bounded mean curvature and no boundary'' or are stationary.

\begin{question}
	Suppose $0 \leq \kappa < \infty$, $V$ is an at least two-dimensional
	integral varifold in $\rel^\adim$, $\| \delta V \| \leq \kappa \| V
	\|$, and $A = \spt \| V \|$.
	\begin{enumerate}
		\item \label{item:question:bounded_var} How regular needs $A$
		to be near $\mathscr{H}^\vdim$ almost all of its points?
		\item \label{item:question:stationary} How much \emph{more}
		regular needs $A$ to be if $\kappa = 0$, i.e. if $V$ is
		stationary?
	\end{enumerate}
\end{question}

Calling a point of $A$ regular if and only if it possesses a neighbourhood $U$
such that $A \cap U$ is a submanifold of class~$1$ of $\rel^\adim$, it is
known that \eqref{item:question:bounded_var} does not entail regularity
$\mathscr{H}^\vdim$ almost everywhere; in fact, a sequence of increasingly
irregular examples was constructed by Allard \cite[8.1\,(2)]{MR0307015},
Brakke \cite[6.1]{MR485012}, and Kolasiński and the author \cite[10.3,
10.8]{MR3625810}.  On the other hand various weaker properties almost
everywhere resembling the behaviour of closed submanifolds of class~$2$ were
established under the hypotheses of \eqref{item:question:bounded_var} by
Brakke \cite[5.8]{MR485012}, Schätzle \cite[Theorems~4.1, 5.1, 6.1,
6.2]{MR2064971} and \cite[Theorems 3.1, 4.1]{MR2472179}, White
\cite[Theorem~2]{MR2747434}, and the author \cite[4.11]{snulmenn.poincare},
\cite[10.2]{snulmenn.decay}, \cite[4.8, 5.2]{snulmenn.c2},
\cite[14.2]{MR3528825}, and \cite[6.8]{MR3626845}.

The only regularity properties valid across points with higher multiplicity
and specific to the stationary case, see \eqref{item:question:stationary}, are
a number of increasingly delicate maximum principles by Solomon and White
\cite{MR1017330}, Ilmanen \cite[Theorem~A]{MR1402732}, and Wickramasekera
\cite[Theorem~19.1]{MR3171756} and \cite[Theorem~1.1]{MR3268871}.  The present
paper makes the first contribution to \eqref{item:question:stationary} valid
across points with higher multiplicity which is different from a maximum
principle.

{ \hypertarget C{}
\begin{citing} [Theorem~C, see \ref{corollary:approx_implies_pt}]
	Suppose $\vdim$ is an integer, $2 \leq \vdim \leq \adim$, $S \in
	\grass \adim \vdim$, $V$~is an $\vdim$ dimensional stationary integral
	varifold in $\rel^\adim$, and $A = \spt \| V \|$.

	Then $A$ is strongly pointwise differentiable of every positive
	integer order at $\mathscr{H}^\vdim$ almost all $a \in A \cap S$.
\end{citing} }

In \ref{example:hesitate_to_vanish} an example is constructed that shows that
Theorem~\hyperlink CC constitutes a regularity property of integral varifolds
setting the stationary case apart of that of bounded mean curvature and no
boundary, see \ref{remark:better_than_c2}.  As far as pointwise
differentiability is concerned, it relies on ``local maximum estimates'' for
subsolutions to the Laplace equation on varifolds by Michael and Simon
\cite[3.4]{MR0344978} and the general principle that control in an approximate
sense tends to entail control in an integral sense in the presence of an
elliptic partial differential equation.  This principle was first discovered
by Schätzle in \cite[Theorem 3.1]{MR2472179} and was since used by author
in \cite[5.2]{snulmenn.c2} and jointly with Kolasiński in
\cite[9.2]{MR3625810}.  Its present implementation in \ref{lemma:zero}
using Michael and Simon's result is considerably simpler than previous
approaches.  Strong pointwise differentiability then follows using a result of
Kolasiński and the author \cite[10.4]{MR3625810} which relies on
Almgren's multiple valued functions from \cite{MR1777737}.

Theorem~\hyperlink CC naturally raises the following regularity question for
stationary integral varifolds that appears to be more tractable than that of
possible almost everywhere regularity: \emph{Suppose $V$ is an $m$ dimensional
stationary integral varifold in $\mathbf R^n$ and $A = \spt \| V \|$.  Is $A$
necessarily [strongly] pointwise differentiable of every positive integer
order at $\mathscr H^m$~almost all $a \in A$?}

It has been announced by the author in
\cite[Corollary~2\,(1)]{snulmenn.mfo1230} that under weaker hypotheses than
those of \eqref{item:question:bounded_var} the set $A$ is pointwise
differentiable of order~$2$ at $\mathscr{H}^\vdim$ almost all of its points.
Theorem~\hyperlink BB is a significantly generalised version of the first part
of the proof of that result.  The second part of the proof along with some
generalisations of Theorem~\hyperlink CC shall appear elsewhere.  The
formulation of Theorem~\hyperlink BB as a separate result for general subsets
of Euclidean space shall facilitate the use of this technique outside the
varifold context.
 
\subsection{Subsequent developments}

Since the publication of the initial version of this paper, two sequels have
been written.  Firstly, Santilli introduced the corresponding approximate
notion of pointwise differentiability for sets (see~\cite[3.8,
3.19]{IUMJ_7645} and the footnote
to~\ref{def:definition_pt_differential_sets}).  He thus obtains a
characterisation of higher order rectifiability, see~\cite[1.2]{IUMJ_7645};
a problem left open by Anzellotti and Serapioni in~\cite{MR1285779}.
Secondly, the author developed an analogous theory of pointwise
differentiability of higher order for distributions
in~\cite{arXiv:1803.10855v1}.  The latter is related to the afore-mentioned
regularity question for stationary integral varifolds as is elaborated upon
in~\cite[\S\,1.3]{arXiv:1803.10855v1}.

\subsection{Organisation of the paper}

After Section~\ref{sect:preliminaries} on preliminaries,
Section~\ref{sec:diff_sets} provides definitions and characterisations for
higher order differentiability of sets.  Sections~\ref{sect:diff_fcts}
and~\ref{sect:diff_sets} treat the higher order differentiability theory for
functions and sets respectively.  In Section~\ref{sect:example} an example for
use in Section~\ref{sect:varifolds} on varifolds is constructed.  Finally,
Appendix~\ref{sect:appendix} contains a table with brief descriptions of the
items employed from \cite{MR41:1976}.

\subsection{Acknowledgements}

The author would like to thank Mario Santilli for reading part of the
manuscript and for bringing a series of papers of Isakov to his attention,
Dr~Sławomir Kolasiński for helping him to become acquainted with some
of these results available only in Russian, and Dr~Yangqin Fang for pointing
him to \cite{MR1979936}.  The initial version of this paper (see
\href{https://arxiv.org/abs/1603.08587v1}{\path{arXiv:1603.08587v1}}) was
written while the author worked at the Max Planck Institute for Gravitational
Physics (Albert Einstein Institute) and the University of Potsdam.

\section{Preliminaries} \label{sect:preliminaries}

In this section, firstly, an a~priori estimate for polynomial functions, see
\ref{lemma:poly_control}, and a resulting uniqueness theorem, see
\ref{thm:poly_uniqueness}, are proven.  Secondly, classical notions of higher
order differentiability are compiled, see \ref{def:k_alpha},
\ref{def:pt_diff_fct}, and \ref{def:submanifold}.

\begin{lemma} \label{lemma:poly_control}
	Suppose $1 \leq M < \infty$.

	Then there exists a positive, finite number $\Gamma$ with the
	following property.

	If $k$ is a nonnegative integer, $k \leq M$, $S$ is a Hilbert space,
	$\dim S \leq M$, $a \in S$, $0 < r < \infty$, $Y$ is a normed
	vectorspace, $P : S \to Y$ is a polynomial function of degree at
	most~$k$, and $X \subset \cball ar$ satisfies $\dist ( x, X ) \leq
	\Gamma^{-1} r$ for $x \in \cball ar$, then there holds
	\begin{equation*}
		\sup \big \{ r^i \| \Der^i P ( x ) \| \with \text{$x \in
		\cball ar$, $i = 0, \ldots, k$} \big \} \leq \Gamma \sup \{ |
		P (x) | \with x \in X \}.
	\end{equation*}
\end{lemma}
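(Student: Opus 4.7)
\emph{Plan.} I would begin by rescaling: setting $Q(\xi) = P(a+r\xi)$, the problem reduces to the case $a = 0$, $r = 1$, with $X \subset \cball{0}{1}$ satisfying $\dist(x,X) \leq \Gamma^{-1}$ for every $x \in \cball{0}{1}$, and the target estimate becomes
\[
\sup \{ \|\Der^i Q(x)\| \with x \in \cball{0}{1},\ 0 \leq i \leq k \} \leq \Gamma \sup \{ |Q(x)| \with x \in X \},
\]
since $\|\Der^i Q(\xi)\| = r^i \|\Der^i P(a+r\xi)\|$.

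Next I would reduce to the case $Y = \rel$ by Hahn--Banach. For each $\lambda$ in the closed unit ball of the continuous dual of $Y$, the composition $p_\lambda = \lambda \circ Q$ is a real polynomial of degree at most $k$ on $S$. Since $|Q(x)| = \sup_\lambda |p_\lambda(x)|$, and applying Hahn--Banach to the values of the symmetric multilinear form $\Der^i Q(x)$ gives $\|\Der^i Q(x)\| = \sup_\lambda \|\Der^i p_\lambda(x)\|$, it is enough to prove the rescaled estimate with a single constant $\Gamma$, uniformly in $\lambda$, for all real polynomials on $S$ of degree at most $k$.

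The space of such polynomials is finite-dimensional, of dimension bounded in terms of $M$ alone (since $\dim S \leq M$ and $k \leq M$); and all the norms occurring below are invariant under isometric isomorphisms of $S$, so the constants really depend only on $M$. On this space the two norms
\[
\|Q\|_0 = \sup \{ |Q(x)| \with x \in \cball{0}{1} \}, \quad \|Q\|_* = \sup \{ \|\Der^i Q(x)\| \with x \in \cball{0}{1},\ 0 \leq i \leq k \}
\]
are equivalent, yielding a constant $C = C(M) \geq 1$ with $\|Q\|_* \leq C \|Q\|_0$. To relate $\|Q\|_0$ to $\sup_X |Q|$, for $x \in \cball{0}{1}$ I pick $x' \in X$ with $|x - x'| \leq \Gamma^{-1}$; since the segment from $x'$ to $x$ lies in $\cball{0}{1}$ by convexity, the mean value inequality gives $|Q(x)| \leq |Q(x')| + \Gamma^{-1}\|Q\|_*$, whence $\|Q\|_0 \leq \sup_X |Q| + \Gamma^{-1} \|Q\|_*$. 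Inserting $\|Q\|_* \leq C \|Q\|_0$ produces $\|Q\|_* \leq C \sup_X |Q| + C \Gamma^{-1} \|Q\|_*$; choosing $\Gamma := 2C$ absorbs the last term and yields $\|Q\|_* \leq \Gamma \sup_X |Q|$, as required.

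The main delicacy is extracting a norm-equivalence constant $C$ depending solely on $M$, independent of $S$ and $Y$; this rests on the isometric classification of Hilbert spaces of fixed dimension and on Hahn--Banach, respectively. After that, the proof is a rescaling together with a routine mean value estimate and absorption.
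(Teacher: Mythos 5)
Your proof is correct, and it reaches the conclusion by a different final argument than the paper. The reduction steps coincide: both you and the paper first normalise to $a=0$, $r=1$ and reduce to $Y=\rel$ via the dual characterisation $|y|=\sup\{|\lambda(y)| \with \lambda\in Y^{*},\ \|\lambda\|\leq 1\}$, and both ultimately exploit that the polynomials of degree at most $k$ on $S$ form a finite-dimensional space with dimension controlled by $M$ (after identifying $S$ with $\rel^{\dim S}$ by an isometry, which makes all constants depend on $M$ alone). The difference is in how the conclusion is extracted: the paper argues by contradiction and compactness, taking a sequence of putative counterexamples with $\Gamma_j\to\infty$, normalising the derivative norm to $1$, passing to a convergent subsequence in the finite-dimensional polynomial space, and observing that the limit polynomial must vanish on the closed unit ball (since the sets $X_j$ become dense in it) while still having norm $1$; you instead give a direct, quantitative argument, combining the equivalence $\|Q\|_{*}\leq C(M)\,\|Q\|_{0}$ of the two norms with a mean-value estimate $\|Q\|_{0}\leq\sup_{X}|Q|+\Gamma^{-1}\|Q\|_{*}$ and absorbing the last term by choosing $\Gamma=2C$. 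Your route has the advantage of producing an explicit admissible $\Gamma$ in terms of the norm-equivalence constant, at the cost of stating the two-norm comparison separately; the paper's compactness argument is shorter to write but non-constructive. One microscopic point: since $X$ need not be closed, the distance to $X$ may not be attained, so you should pick $x'\in X$ with $|x-x'|\leq 2\Gamma^{-1}$ (or pass to the infimum), which only changes the absorption constant to, say, $\Gamma=4C$ and affects nothing else.
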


\begin{proof}
	Using translations, homotheties, and the equation
	\begin{equation*}
		|y| = \sup \big \{ | \alpha (y)| \with \text{$\alpha \in
		Y^\ast$ and $\| \alpha \| \leq 1$} \big \} \quad \text{for $y
		\in Y$},
	\end{equation*}
	see \cite[\printRoman 2.3.15]{MR0117523}, it is sufficient to prove
	the assertion resulting from additionally hypothesising $a = 0$, $r =
	1$, and $Y = \rel$ in the body of the lemma.
	
	If the remaining assertion were false for some $M$, there would exist
	a sequence $\Gamma_j$ with $\Gamma_j \to \infty$ as $j \to \infty$ and
	sequences $k_j$, $S_j$, $P_j$, and $X_j$ showing that $\Gamma_j$ does
	not have the property described in the remaining assertion.  One could
	assume that, for some $k$ and $S$, there would hold
	\begin{equation*}
		k_j = k, \quad S_j = S, \quad
		\sup \big \{ \| \Der^i P_j ( x ) \| \with \text{$x \in
		S \cap \cball 01$, $i = 0, \ldots, k$} \big \} = 1
	\end{equation*}
	for every positive integer $j$.  Since the space of polynomial
	functions $P : S \to \rel$ of degree at most $k$ is finite
	dimensional and the function on that space with value
	\begin{equation*}
		\sup \big \{ \| \Der^i P ( x ) \| \with \text{$x \in
		S \cap \cball 01$, $i = 0, \ldots, k$} \big \}
	\end{equation*}
	at $P$ is a norm, see \cite[\hyperlink{1_10_2}{1.10.2},
	\hyperlink{1_10_4}{1.10.4}, \hyperlink{3_1_11}{3.1.11}]{MR41:1976},
	possibly passing to a subsequence, there would exist a polynomial
	function $P : S \to \rel$ such that
	\begin{gather*}
		\sup \big \{ \| \Der^i P ( x ) \| \with \text{$x \in
		S \cap \cball 01$, $i = 0, \ldots, k$} \big \} = 1,
		\\
		\sup \{ |P(x)| \with x \in S \cap \cball 01 \} \leq
		\liminf_{j \to \infty} \sup \{ | P_j(x) | \with x \in X_j \} =
		0.
	\end{gather*}
	This would be a contradiction.
\end{proof}

\begin{remark}
	A conceptually similar lemma appears in Campanato
	\cite[Lemma~2.\printRoman 1]{MR0167862} and is attributed there to
	De~Giorgi.
\end{remark}

\begin{theorem} \label{thm:poly_uniqueness}
	Suppose $k$ is a nonnegative integer, $S$ is a Hilbert space, $\dim S <
	\infty$, $Y$ is a normed vectorspace, $P : S \to Y$ is a polynomial
	function of degree at most $k$, $a \in S$, $X \subset S$, and
	\begin{equation*}
		\lim_{r \to 0+} r^{-1} \sup \dist ( \cdot, X ) \lIm S \cap
		\cball ar \rIm = 0, \quad \lim_{r \to 0+} r^{-k} \sup | P |
		\lIm X \cap \cball ar \rIm = 0.
	\end{equation*}

	Then $P = 0$.
\end{theorem}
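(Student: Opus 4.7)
The plan is to show that $\Der^i P(a) = 0$ for every $i = 0, \ldots, k$; since $P$ is a polynomial of degree at most $k$, Taylor's formula at $a$ then forces $P \equiv 0$.

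The main tool will be \ref{lemma:poly_control}, applied with $M = \max\{k, \dim S\}$ and the resulting constant $\Gamma$. The first hypothesis, expressed as $\varepsilon(r) := r^{-1} \sup \dist(\cdot, X) \lIm S \cap \cball{a}{r} \rIm \to 0$ as $r \to 0+$, asserts the required density of $X$ in $\cball{a}{r}$ but not of $X \cap \cball{a}{r}$ as the lemma formally demands. A brief contraction argument fills this small gap: for $x \in \cball{a}{r}$, consider $x_\ast := a + (1 - \lambda)(x - a) \in \cball{a}{(1-\lambda)r}$ with $\lambda$ slightly larger than $\varepsilon(r)$, pick $y \in X$ with $|y - x_\ast|$ within $\varepsilon(r) r$ plus a tiny slack, and check that $y \in \cball{a}{r}$ while $|y - x| \leq 2\varepsilon(r) r$ up to the same slack. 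Hence, for $r$ small enough that $2\varepsilon(r) \leq \Gamma^{-1}$, the lemma applies with $X \cap \cball{a}{r}$ in place of the abstract set, inside the ball $\cball{a}{r}$.

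The lemma then yields, for every such $r$ and each $i \in \{0, \ldots, k\}$,
\begin{equation*}
	r^i \| \Der^i P(a) \| \leq \Gamma \sup |P| \lIm X \cap \cball{a}{r} \rIm .
\end{equation*}
The second hypothesis makes the right-hand side $o(r^k)$, so $\|\Der^i P(a)\| = o(r^{k-i})$; letting $r \to 0+$ gives $\Der^i P(a) = 0$ for each $i = 0, \ldots, k$. Because $P$ has degree at most $k$, its Taylor polynomial of order $k$ at $a$ is $P$ itself, and it vanishes; therefore $P \equiv 0$. The only genuinely technical step is the density matching of the first paragraph; everything else is a direct combination of the two hypotheses with \ref{lemma:poly_control}.
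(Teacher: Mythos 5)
Your proposal is correct and follows essentially the same route as the paper: apply Lemma \ref{lemma:poly_control} at each sufficiently small radius to conclude $\Der^i P(a)=0$ for $i=0,\ldots,k$, and then use the polynomial Taylor expansion (Federer 3.1.11 in the paper) to get $P=0$. Your contraction argument merely makes explicit the small point, left implicit in the paper, that the hypothesis on $\dist(\cdot,X)$ yields the required density of $X\cap\cball ar$ in $\cball ar$ once $r$ is small.
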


\begin{proof}
	Applying \ref{lemma:poly_control} for each sufficiently small $r$
	yields $\Der^i P(a) = 0$ for $i = 0, \ldots, k$, hence $P = 0$ by
	\cite[\hyperlink{3_1_11}{3.1.11}]{MR41:1976}.
\end{proof}

\begin{definition} \label{def:k_alpha}
	Suppose $k$ is a nonnegative integer, $0 < \alpha \leq 1$, $X$ and $Y$
	are normed vectorspaces, and $f$ maps a subset of $X$ into $Y$.

	Then $f$ is said to be \emph{of class $(k,\alpha)$} if and only if $f$
	is of class $k$ and $\Der^k f$ locally satisfies a Hölder
	condition with exponent $\alpha$.%
	\begin{footnote}
		{The map $f$ is called of class $k$ if and only if its domain
		is open and it is $k$ times continuously differentiable, see
		\cite[3.1.11]{MR41:1976}.}
	\end{footnote}
	Moreover, $f$ is called a \emph{diffeomorphism of class~$(k,\alpha)$}
	if and only if $f$ is a homeomorphism, $f$ is of class $(k,\alpha)$,
	and $f^{-1}$ is of class~$(k,\alpha)$.
\end{definition}

\begin{miniremark} \label{miniremark:class_ka}
	Suppose $k$ is a nonnegative integer, $0 < \alpha \leq 1$, $\vdim$ is
	a positive integer, $U$ is an open subset of $\rel^\vdim$, $Y$ is a
	normed vectorspace, and $f_i : U \to \bigodot^i ( \rel^\vdim, Y )$ for
	$i = 0, \ldots, k$.%
	\begin{footnote}
		{If $V$ and $W$ are vectorspaces, then $\bigodot^0 (V,W)=W$
		and $\bigodot^i (V,W)$ is the vectorspace of all symmetric $i$
		linear maps from $V^i$ into $W$ whenever $i$ is a positive
		integer, see \cite[1.10.1]{MR41:1976}.}
	\end{footnote}
	Then \cite[\hyperlink{3_1_11}{3.1.11},
	\hyperlink{3_1_14}{3.1.14}]{MR41:1976} and \ref{lemma:poly_control}
	may be used to verify that $f_0$ is of class $(k,\alpha)$ with $\Der^i
	f_0 = f_i$ for $i = 0, \ldots, k$ if and only if
	\begin{equation*}
		\sup \Big \{ |x-a|^{-k-\alpha} \big | f_0 (x) - \tsum{i=0}k
		\langle (x-a)^i/i!, f_i (a) \rangle \big | \with \text{$a,x \in
		K$, $a \neq x$} \Big \} < \infty
	\end{equation*}
	whenever $K$ is a compact subset of $U$.%
	\begin{footnote}
		{If $V$ and $W$ are vectorspaces, $i$ is a positive
		integer, and $\phi \in \bigodot^i (V,W)$, then
		\begin{equation*}
			\langle v^i/i!, \phi \rangle = i!^{-1} \phi ( v,
			\ldots, v ) \quad \text{for $v \in V$},
		\end{equation*}
		see \cite[1.9.1, 1.10.1, 1.10.4]{MR41:1976}. Similarly,
		$\langle v^i/i!, \phi \rangle = \phi$ if $i = 0$ and $\phi \in
		\bigodot^0 (V,W)$.}
	\end{footnote}
\end{miniremark}

\begin{miniremark} \label{miniremark:k_jet_for_class_gamma}
	Suppose $k$ is a nonnegative integer, $0 \leq \alpha \leq 1$, $\gamma
	= k$ if $\alpha = 0$ and $\gamma = (k,\alpha)$ if $\alpha > 0$, $X$
	and $Y$ are normed vectorspaces, $a \in U \subset X$, $f : U \to Y$ is
	of class~$\gamma$, and $P$ is the $k$~jet of $f$ at $a$.%
	\begin{footnote}
		{The $k$~jet of~$f$ at~$a$ is the polynomial function $P : X
		\to Y$ of degree at most $k$ satisfying $P (x) = \sum_{i=0}^k
		\langle (x-a)^i/i!, \Der^i f(a) \rangle$ for $x \in X$, see
		\cite[3.1.11]{MR41:1976}.}
	\end{footnote}
	Then \cite[\hyperlink{3_1_11}{3.1.11}]{MR41:1976} yields
	\begin{gather*}
		\lim_{x \to a} | f(x)-P(x) |/|x-a|^k = 0 \quad \text{if
		$\alpha = 0$}, \\
		\limsup_{x \to a} | f(x)-P(x) |/|x-a|^{k+\alpha} < \infty
		\quad \text{if $\alpha > 0$}.
	\end{gather*}
\end{miniremark}

\begin{definition} \label{def:pt_diff_fct}
	Suppose $k$ is a nonnegative integer, $0 \leq \alpha \leq 1$, $\gamma
	= k$ if $\alpha = 0$ and $\gamma = (k,\alpha)$ if $\alpha > 0$, $X$
	and $Y$ are normed vectorspaces, $f$~maps a subset of $X$ into $Y$,
	and $a \in X$.

	Then $f$ is called \emph{pointwise differentiable of order~$\gamma$
	at~$a$} if and only if there exist an open subset $U$ of $X$ and a
	function $g : U \to Y$ of class $\gamma$ such that
	\begin{gather*}
		a \in U \subset \dmn f, \quad f(a) = g(a), \\
		\lim_{x \to a} | f(x)-g(x) |/|x-a|^k = 0 \quad \text{if
		$\alpha = 0$}, \\
		\limsup_{x \to a} | f(x)-g(x) |/|x-a|^{k+\alpha} < \infty
		\quad \text{if $\alpha > 0$}.
	\end{gather*}
	Whenever $f$ is pointwise differentiable of order~$k$ at $a$ one
	defines (see \ref{miniremark:k_jet_for_class_gamma}) the
	\emph{pointwise differential of order~$i$ of $f$ at $a$} by
	\begin{equation*}
		\pt \Der^i f(a) = \Der^i g(a) \quad \text{for $i = 0, \ldots,
		k$}.
	\end{equation*}
\end{definition}

\begin{remark} \label{remark:pt_diff_fct_low_order}
	A function is pointwise differentiable of order~$0$ at $a$ if and only
	if it is continuous at $a$. In this case $\pt \Der^0 f(a) = f(a)$.
	Similarly, a function is pointwise differentiable of order~$1$ at $a$
	if and only if it is differentiable at $a$ in which case $\pt \Der^1
	f(a) = \Der f(a)$.
\end{remark}

\begin{remark}
	If $f$ is $k$ times differentiable at $a$, then $f$ is pointwise
	differentiable of order~$k$ at $a$ and $\pt \Der^i f(a)= \Der^i f(a)$
	for $i = 0, \ldots, k$; in fact, one may employ an induction argument
	based on the fact that $\Lip f = \sup \| \Der f \| \lIm U \rIm$
	whenever $U$ is open and convex with $U \subset \dmn \Der f$, see
	\cite[\hyperlink{2_2_7}{2.2.7}, \hyperlink{3_1_1}{3.1.1}]{MR41:1976},
	the one-dimensional case on which appears in Weil
	\cite[p.~589]{MR1363850}.%
	\begin{footnote}
		{If $g$ is a map between metric spaces, then $\Lip g$ is its
		Lipschitz constant, see \cite[2.2.7]{MR41:1976}.}
	\end{footnote}
\end{remark}

\begin{definition} \label{def:submanifold}
	Suppose $k$ and $\adim$ are positive integers, $\vdim$ is an integer,
	$0 \leq \vdim \leq \adim$, $0 < \alpha \leq 1$, and $B \subset
	\rel^\adim$.

	Then $B$ is called an \emph{$\vdim$ dimensional submanifold of class
	$(k,\alpha)$} if and only if for each $b \in B$ there exists a
	neighbourhood $U$ of $b$ in $\rel^\adim$, a diffeomorphism $f : U \to
	\rel^\adim$ of class $(k,\alpha)$, and an $\vdim$ dimensional
	subspace $T$ of $\rel^\adim$ with
	\begin{equation*}
		f \lIm B \cap U \rIm = T \cap \im f.
	\end{equation*}
\end{definition}

\begin{remark} \label{remark:hoelder_ok}
	The basic properties of maps and submanifolds of class $k$ given in
	\cite[\hyperlink{3_1_18}{3.1.18},
	\hyperlink{3_1_19}{3.1.19}]{MR41:1976} remain valid for maps and
	submanifolds of class $(k,\alpha)$.
\end{remark}

\section{Basic characterisations} \label{sec:diff_sets}

In the present section the key definitions concerning differentiability of
higher order for sets are provided in \ref{def:diff_sets} and
\ref{def:definition_pt_differential_sets}.  The main characterisations of
these concepts are proven in \ref{thm:eq_diff_sets} and
\ref{thm:inductive_blowup}; the first of which takes a particularly simple
form if the set is associated to the graph of a function, see
\ref{corollary:cmp_fct_set}.

\begin{miniremark} \label{miniremark:isolated_points}
	Suppose $A \subset \rel^\adim$ and $a \in \rel^\adim$.  Then $\Tan
	(A,a) = \{ 0 \}$ if and only if $a$ is an isolated point of $A$, as
	may be verified using \cite[\hyperlink{3_1_21}{3.1.21}]{MR41:1976}.
\end{miniremark}

\begin{miniremark} \label{miniremark:diff_sets}
	Suppose $A \subset \rel^\adim$, $B \subset \rel^\adim$, $a \in (
	\Clos A ) \cap ( \Clos B )$, and $0 < r < \infty$.%
	\begin{footnote}
		{The closure of a set $A$ is denoted $\Clos A$, see
		\cite[p.~669]{MR41:1976}.}
	\end{footnote}
	Then one verifies%
	\begin{footnote}
		{The symbol $\oball ar$ denotes the open ball with centre $a$
		and radius $r$, see \cite[2.8.1]{MR41:1976}.}
	\end{footnote}
	\begin{gather*}
		\begin{split}
			& \sup | \dist ( \cdot, A ) - \dist ( \cdot, B ) |
			\lIm \oball ar \rIm \\
			& \qquad \leq \sup \big ( \dist ( \cdot, A ) \lIm B
			\cap \oball a{2r} \rIm \cup \dist ( \cdot, B ) \lIm A
			\cap \oball a{2r} \rIm \big ).
		\end{split}
	\end{gather*}
\end{miniremark}

\begin{definition} \label{def:diff_sets}
	Suppose $k$ and $\adim$ are positive integers, $0 \leq \alpha \leq 1$,
	$\gamma = k$ if $\alpha = 0$ and $\gamma = (k,\alpha)$ if $\alpha >
	0$, and $A \subset \rel^\adim$.

	Then $A$ is called \emph{pointwise [strongly pointwise] differentiable
	of order $\gamma$ at $a$} if and only if there exists a submanifold
	$B$ of class $\gamma$ of $\rel^\adim$ such that $a \in B$,
	\begin{equation*}
		\lim_{r \to 0+} r^{-1} \sup | \dist (\cdot, A) - \dist (
		\cdot, B ) | \lIm \cball ar \rIm = 0,
	\end{equation*}
	and
	\begin{gather*}
		\begin{gathered}
			\lim_{r \to 0+} r^{-k} \sup \dist ( \cdot, B ) \lIm A
			\cap \cball ar \rIm = 0 \quad \text{if $\alpha =
			0$}, \\
			\limsup_{r \to 0+} r^{-k-\alpha} \sup \dist ( \cdot,
			B) \lIm A \cap \cball ar \rIm < \infty \quad
			\text{if $\alpha > 0$}.
		\end{gathered} \\
		\left [
		\begin{gathered}
			\lim_{r \to 0+} r^{-k} \sup | \dist ( \cdot, A
			) - \dist ( \cdot, B ) | \lIm \cball ar \rIm
			= 0 \quad \text{if $\alpha = 0$}, \\
			\limsup_{r \to 0+} r^{-k-\alpha} \sup | \dist
			( \cdot, A ) - \dist ( \cdot, B ) | \lIm
			\cball ar \rIm < \infty \quad \text{if
			$\alpha > 0$}.
		\end{gathered}
		\right ]
	\end{gather*}
\end{definition}

\begin{remark} \label{remark:same_tangent_cone}
	It follows that $a \in \Clos A$. Moreover, one verifies $\Tan (A,a) =
	\Tan (B,a)$, in particular $\Tan (A,a)$ is a $\dim B$ dimensional
	subspace of $\rel^\adim$.
\end{remark}

\begin{remark}
	In the bracketed case the condition
	\begin{equation*}
		\lim_{r \to 0+} r^{-1} \sup | \dist (\cdot, A) - \dist (
		\cdot, B ) | \lIm \cball ar \rIm = 0
	\end{equation*}
	is redundant.
\end{remark}

\begin{remark} \label{remark:agreement_strong_normal}
	If $(k,\alpha)=(1,0)$, then pointwise differentiability and strong
	pointwise differentiability agree.
\end{remark}

\begin{remark} \label{remark:diffeo_invariance}
	If $U$ is an open subset of $\rel^\adim$, $g : U \to \rel^\adim$ is a
	diffeomorphism of class~$\gamma$, $A \subset U$, $a \in U$, and $A$ is
	pointwise [strongly pointwise] differentiable of order~$\gamma$ at
	$a$, then $g \lIm A \rIm$ is pointwise [strongly pointwise]
	differentiable of order~$\gamma$ at $g(a)$, as may be verified using
	\ref{miniremark:diff_sets}.
\end{remark}

\begin{remark} \label{remark:diff_sets_zero_dim}
	Suppose $A$, $k$, $\alpha$, and $\gamma$ are as the hypotheses of
	\ref{def:diff_sets}.  Then $A$ is pointwise [strongly pointwise]
	differentiable of order $\gamma$ at $a \in \rel^\adim$ with $\Tan
	(A,a) \in \grass \adim 0$ if and only if $a$ is isolated in $A$ by
	\ref{miniremark:isolated_points}.
\end{remark}

\begin{example} \label{example:subset_of_plane}
	Suppose $\adim$ is a positive integer, $\vdim$ is an integer, $0 \leq
	\vdim \leq \adim$, and $A \subset S \in \grass \adim \vdim$.  Then $A$
	is pointwise differentiable of order~$1$ at $a$ with $\Tan (A,a) = S$
	whenever $a \in S$ and $\density^\vdim ( \mathscr{H}^\vdim \restrict S
	\without \Clos A, a ) = 0$ which is the case at $\mathscr{H}^\vdim$
	almost all $a \in \Clos A$ by
	\cite[\hyperlink{2_10_19}{2.10.19}\,(4)]{MR41:1976}.%
	\begin{footnote}
		{The $\vdim$ dimensional density of a measure $\phi$ over
		$\rel^\adim$ at $a$ equals
		\begin{equation*}
			\density^\vdim ( \phi, a ) = \lim_{r \to 0+}
			\frac{\measureball \phi {\cball ar}}{\unitmeasure
			\vdim r^\vdim},
		\end{equation*}
		where $\unitmeasure \vdim =
		\measureball{\mathscr{L}^\vdim}{\cball 01}$ if $\vdim > 0$ and
		$\unitmeasure 0 = 1$, see \cite[2.7.16\,(1),
		2.10.19]{MR41:1976}.}
	\end{footnote}
\end{example}

\begin{miniremark} \label{miniremark:cone}
	If $\adim$ is a positive integer, $\vdim$ is an integer, $0 \leq \vdim
	\leq \adim$, $A \subset \rel^\adim$, $a \in \Clos A$, $S \in \grass
	\adim \vdim$, and $S^\perp \cap \Tan (A,a) = \{ 0 \}$, then there
	exist $r > 0$ and $0 \leq \kappa < \infty$ such that
	\begin{equation*}
		A \cap \cball ar \subset \big \{ \chi \with | \perpproject S
		(\chi-a) | \leq \kappa | \project S (\chi-a) | \big \}.
	\end{equation*}
\end{miniremark}

\begin{theorem} \label{thm:eq_diff_sets}
	Suppose $k$ and $\adim$ are positive integers, $\vdim$ is an integer,
	$0 \leq \vdim \leq \adim$, $0 \leq \alpha \leq 1$, $\gamma = k$ if
	$\alpha = 0$ and $\gamma = (k,\alpha)$ if $\alpha > 0$, $A \subset
	\rel^\adim$, $a \in \Clos A$, $S \in \grass \adim \vdim$, $0 \leq
	\kappa < \infty$, and
	\begin{equation*}
		A \subset \big \{ \chi \with | \perpproject S (\chi-a) | \leq
		\kappa | \project S (\chi-a) | \big \}.
	\end{equation*}

	Then the following three conditions are equivalent.
	\begin{enumerate}
		\item \label{item:eq_diff_sets:eq} There exists an $\vdim$
		dimensional submanifold $B$ of class $\gamma$ of $\rel^\adim$
		such that $B$ satisfies the conditions of \ref{def:diff_sets}.
		\item \label{item:eq_diff_sets:manifold} The set $\project S
		\lIm A \rIm$ is strongly pointwise differentiable of order $1$
		[order $\gamma$] at $\project S (a)$ with $\Tan ( \project S
		\lIm A \rIm, \project S (a) ) = S$ and there exists an $\vdim$
		dimensional submanifold $B$ of class $\gamma$ of $\rel^\adim$
		such that $a \in B$ and
		\begin{gather*}
			\lim_{r \to 0+} r^{-k} \sup \dist ( \cdot, B) \lIm A
			\cap \cball ar \rIm = 0 \quad \text{if $\alpha = 0$},
			\\
			\limsup_{r \to 0+} r^{-k-\alpha} \sup \dist ( \cdot,
			B) \lIm A \cap \cball ar \rIm < \infty \quad \text{if
			$\alpha > 0$}.
		\end{gather*}
		\item \label{item:eq_diff_sets:polynomial} The set $\project S
		\lIm A \rIm$ is strongly pointwise differentiable of order $1$
		[order $\gamma$] at $\project S (a)$ with $\Tan ( \project S
		\lIm A \rIm, \project S ( a )) = S$ and there exists a
		function $f : S \to S^\perp$ of class~$\gamma$ satisfying
		\begin{equation*}
			\lim_{r \to 0+} r^{-k} \sup \big \{ | \perpproject S
			(\chi)- f ( \project S (\chi) ) | \with \chi \in A
			\cap \project S^{-1} \lIm \cball{ \project S(a)} r
			\rIm \big \} = 0
		\end{equation*}
		if $\alpha = 0$, and
		\begin{equation*}
			\limsup_{r \to 0+} r^{-k-\alpha} \sup \big \{ |
			\perpproject S (\chi)- f ( \project S (\chi) ) | \with
			\chi \in A \cap \project S^{-1} \lIm \cball{ \project
			S(a)} r \rIm \big \} < \infty
		\end{equation*}
		if $\alpha > 0$.
	\intertextenum{In this case the $k$ jet $P$ of $f$ at $\project S (a)$
	is uniquely determined by $k$, $A$, and $(a,S)$ and the following
	three additional statements hold.}
		\item \label{item:eq_diff_sets:add_manifold} An $\vdim$
		dimensional submanifold $B$ of class~$\gamma$ of $\rel^\adim$
		satisfies \eqref{item:eq_diff_sets:eq} if and only if there
		exists a function $f : S \to S^\perp$ of class~$\gamma$ such
		that
		\begin{equation*}
			B \cap U = \{ \chi + f ( \chi ) \with \chi \in S \}
			\cap U \quad \text{for some neighbourhood $U$ of $a$}
		\end{equation*}
		and the $k$~jet of $f$ at $\project S(a)$ equals $P$.
		\item \label{item:eq_diff_sets:add_half_manifold} The
		statement \eqref{item:eq_diff_sets:add_manifold} holds with
		``\eqref{item:eq_diff_sets:eq}'' replaced by
		``\eqref{item:eq_diff_sets:manifold}''.
		\item \label{item:eq_diff_sets:add_fct} A function $f : S \to
		S^\perp$ of class~$\gamma$ satisfies
		\eqref{item:eq_diff_sets:polynomial} if and only if its
		$k$~jet at $\project S(a)$ equals $P$.
	\end{enumerate}
\end{theorem}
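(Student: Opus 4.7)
The plan is to establish a cycle (3) $\Rightarrow$ (1) $\Rightarrow$ (2) $\Rightarrow$ (3), then deduce uniqueness of the $k$ jet $P$ from \ref{thm:poly_uniqueness}, and finally read off the additional statements (4), (5), (6) as byproducts of the cycle. Throughout the argument the cone inclusion acts as the bridge between distances in $\rel^\adim$ and their projected counterparts in $S$, since it forces $\project S$ to be bi-Lipschitz on $A$ near $a$ with constant depending only on $\kappa$.

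For (3) $\Rightarrow$ (1) I set $B = \{\xi + f(\xi) \with \xi \in S\}$, which is an $\vdim$ dimensional submanifold of class $\gamma$ of $\rel^\adim$ because $\xi \mapsto \xi + f(\xi)$ is a class $\gamma$ embedding with injective derivative; that $a \in B$ amounts to $f(\project S(a)) = \perpproject S(a)$, which I obtain by evaluating the vertical closeness along a sequence in $A$ converging to $a$. For $\chi \in A \cap \cball a r$ the cone inequality together with the form of $B$ yields $\dist(\chi, B) \leq |\perpproject S(\chi) - f(\project S(\chi))|$, directly controlled by the hypothesis. For $x = \xi + f(\xi) \in B$ near $a$, strong pointwise differentiability of order~$1$ (respectively order~$\gamma$ in the bracketed case) of $\project S \lIm A \rIm$ at $\project S(a)$ produces $\xi' \in \project S \lIm A \rIm$ close to $\xi$; lifting to $\chi' \in A$ with $\project S(\chi') = \xi'$ and combining with vertical closeness and the local Lipschitz constant of $f$ controls $|\chi' - x|$, hence $\dist(x, A)$. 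The two one-sided estimates together with \ref{miniremark:diff_sets} yield the conditions of \ref{def:diff_sets}.

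For (1) $\Rightarrow$ (2) I use \ref{remark:same_tangent_cone}: $\Tan(B, a) = \Tan(A, a)$ lies in the closed cone and has dimension $\vdim$, so projects isomorphically onto $S$; hence near $a$ the submanifold $B$ is the graph over $S$ of a class $\gamma$ function $g$. Evaluating the symmetric distance estimate of \ref{def:diff_sets} at points $x = \xi + g(\xi) \in B$ with $\xi$ near $\project S(a)$ produces $\chi' \in A$ close to $x$, whose projection lies in $\project S \lIm A \rIm$ close to $\xi$; this yields strong pointwise differentiability of order~$1$ (resp.\ order~$\gamma$) of $\project S \lIm A \rIm$ with tangent $S$, and the same $B$ witnesses the submanifold clause of (2). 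For (2) $\Rightarrow$ (3) I first observe that under the cone assumption $\Tan(\project S \lIm A \rIm, \project S(a)) = S$ forces $\project S$ to map $\Tan(A, a)$ onto $S$ while meeting $S^\perp$ only at $0$, hence $\dim \Tan(A, a) = \vdim$; next the one-sided estimate $\dist(\chi, B) = o(|\chi - a|^k)$ gives $\Tan(A, a) \subset \Tan(B, a)$, and equality of these $\vdim$ dimensional spaces forces $\Tan(B, a) \cap S^\perp = \{0\}$, so $B$ is locally a graph over $S$ of a class $\gamma$ function $g$; extending $g$ globally via a smooth cutoff gives $f : S \to S^\perp$ of class $\gamma$, and vertical closeness is verified by picking, for each $\chi \in A$ close to $a$, an almost-minimizing $\xi^*$ with $|\chi - \xi^* - g(\xi^*)| \leq \dist(\chi, B)$ and using the local Lipschitz constant of $g$ to replace $\xi^*$ with $\project S(\chi)$.

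Uniqueness of $P$ is the cleanest step: if $f_1$, $f_2$ both satisfy (3) with $k$ jets $P_1$, $P_2$ at $\project S(a)$, two applications of vertical closeness give $|f_1 - f_2| = o(r^k)$ on $\project S \lIm A \rIm \cap \cball{\project S(a)}r$, and \ref{miniremark:k_jet_for_class_gamma} converts this into $|P_1 - P_2| = o(r^k)$ on the same set; the $o(r)$ density of $\project S \lIm A \rIm$ in $S$ near $\project S(a)$ then lets \ref{thm:poly_uniqueness} force $P_1 = P_2$. Statement (6) is then immediate because the triangle inequality shows that replacing $f$ by any class $\gamma$ function with the same $k$ jet preserves vertical closeness; statements (4) and (5) follow by combining (6) with the local graph representation of $B$ developed in the cycle, noting that local agreement of $B$ with $\{\xi + f(\xi) \with \xi \in S\}$ near $a$ makes their distance functions coincide on sufficiently small balls $\cball a r$. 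The main obstacle I expect is the dimension bookkeeping in (2) $\Rightarrow$ (3), where the identities $\project S \Tan(A,a) = S$, $\Tan(A,a) \cap S^\perp = \{0\}$, and $\Tan(A,a) \subset \Tan(B, a)$ must be assembled from the one-sided distance estimate and the projection regularity in order to conclude that $B$ admits a graph representation over $S$.
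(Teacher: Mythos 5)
Your proposal is correct and takes essentially the same approach as the paper: the same cone-condition and tangent-cone bookkeeping to obtain graph representations of the comparison submanifolds over $S$, the same two nearest-point estimates (projected-set density controlling $\dist(\cdot,A)$ on the graph, and $\dist(\cdot,B)$ on $A$ plus a local Lipschitz bound yielding vertical closeness), and the same uniqueness argument via \ref{thm:poly_uniqueness} together with \ref{miniremark:k_jet_for_class_gamma}. The only difference is organisational: you run the cycle \eqref{item:eq_diff_sets:polynomial}$\Rightarrow$\eqref{item:eq_diff_sets:eq}$\Rightarrow$\eqref{item:eq_diff_sets:manifold}$\Rightarrow$\eqref{item:eq_diff_sets:polynomial}, whereas the paper gets \eqref{item:eq_diff_sets:polynomial}$\Rightarrow$\eqref{item:eq_diff_sets:manifold} for free and proves \eqref{item:eq_diff_sets:manifold}$\Rightarrow$\eqref{item:eq_diff_sets:eq} and (graph case of) \eqref{item:eq_diff_sets:eq}$\Rightarrow$\eqref{item:eq_diff_sets:polynomial}, with identical constituent estimates.
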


\begin{proof}
	If $\vdim = 0$ then $A = \{ a \}$.  Therefore one may assume $\vdim >
	0$ by \ref{remark:diff_sets_zero_dim}.

	Clearly, \eqref{item:eq_diff_sets:polynomial} implies
	\eqref{item:eq_diff_sets:manifold} with $B = \{ \chi + f ( \chi )
	\with \chi \in S \}$.  Moreover, if $B$ satisfies
	\eqref{item:eq_diff_sets:eq} then there exists a function $f : S \to
	S^\perp$ of class~$\gamma$ such that
	\begin{equation*}
		B \cap U = \{ \chi + f ( \chi ) \with \chi \in S \} \cap U
		\quad \text{for some neighbourhood $U$ of $a$}
	\end{equation*}
	by \ref{remark:hoelder_ok}, \ref{remark:same_tangent_cone}, and
	\cite[\hyperlink{3_1_18}{3.1.18},
	\hyperlink{3_1_19}{3.1.19}\,(4)]{MR41:1976}.
	
	Next, two implications will be shown.

	\emph{Firstly, if $B$ satisfies \eqref{item:eq_diff_sets:manifold}
	then $B$ satisfies \eqref{item:eq_diff_sets:eq}.}  In this regard,
	notice that
	\begin{equation*}
		\Tan (A,a) \subset \Tan (B,a), \quad \Tan ( \project S \lIm A
		\rIm, \project S (a) ) \subset \project S \lIm \Tan (A,a)
		\rIm,
	\end{equation*}
	where the second inclusion is implied by $ S^\perp \cap \Tan (A,a) =
	\{ 0 \}$.  It follows $\project S \lIm \Tan (B,a) \rIm = S$, the
	linear map $\project S | \Tan (B,a)$ is univalent%
	\begin{footnote}
		{The term ``univalent'' is also known as ``injective''.}
	\end{footnote}
	and
	\begin{equation*}
		\Tan (A,a) = \Tan (B,a).
	\end{equation*}
	Therefore, in view of \cite[\hyperlink{3_1_18}{3.1.18},
	\hyperlink{3_1_19}{3.1.19}\,(4)]{MR41:1976}, one may assume
	\begin{equation*}
		| \perpproject{S} ( \chi-b ) | \leq \kappa | \project S
		(\chi-b) | \quad \text{whenever $\chi, b \in B$}
	\end{equation*}
	possibly enlarging $\kappa$ and shrinking $B$.  Also observe that
	\ref{remark:same_tangent_cone} with $A$, $a$, and $B$ replaced by
	$\project S \lIm A \rIm$, $\project S (a)$, and the submanifold
	furnished by the strong pointwise differentiability of order $1$
	[order $\gamma$] of $\project S \lIm A \rIm$ at $\project S (a)$,
	\cite[\hyperlink{3_1_19}{3.1.19}\,(5)]{MR41:1976}, and
	\begin{equation*}
		\dist ( \project S ( \chi ), \project S \lIm A \rIm ) \leq
		\dist ( \chi, \project S \lIm A \rIm ) \quad \text{for $\chi
		\in \rel^\adim$}
	\end{equation*}
	imply for $\delta (r) = \sup \dist ( \cdot, \project S \lIm A
	\rIm ) \lIm S \cap \cball{ \project{S} (a)} r \rIm$ that
	\begin{gather*}
		\lim_{r \to 0+} r^{-1} \delta (r) = 0. \\
		\left [ \lim_{r \to 0+} r^{-k} \delta (r) = 0 \quad
		\text{if $\alpha = 0$}, \qquad \limsup_{r \to 0+}
		r^{-k-\alpha} \delta (r) < \infty \quad \text{if $\alpha
		> 0$}. \right ]
	\end{gather*}
	If $\chi \in B$ and $| \chi-a | \leq r$, then, as $\Clos \project S
	\lIm A \rIm = \project S \lIm \Clos A \rIm$, there exist $x \in \Clos
	A$ with $| \project S(x-\chi) | = \dist ( \project S (\chi), \project
	S \lIm A \rIm )$ and $b \in \Clos B$ with $|x-b| = \dist (x,B)$,
	hence, noting
	\begin{gather*}
		| \project {S} (\chi-x) | \leq \delta (r), \quad |
		\project S (x-a) | \leq | \project{S} (x-\chi) | + | \project
		S ( \chi-a) | \leq 2r, \\
		|x-a| \leq ( 1 + \kappa ) | \project{S} (x-a) | \leq 2 ( 1 +
		\kappa ) r, \quad |x-b| \leq d ( ( 3 + 2 \kappa ) r ), \\
		| \project{S} ( \chi-b ) | \leq | \project{S} ( \chi-x ) | + |
		\project S (x-b) | \leq \delta (r) + |x-b|, \\
		| \chi-b | \leq ( 1 + \kappa ) | \project{S} (\chi-b) |,
	\end{gather*}
	where $d(s) = \sup \dist ( \cdot, B ) \lIm A \cap \cball as \rIm$ for
	$0 < s < \infty$, one estimates
	\begin{equation*}
		| \chi-x | \leq | \chi-b | + | b - x | \leq (2+\kappa) \big (
		\delta (r) + d ((3+2\kappa)r) \big ).
	\end{equation*}
	Therefore, in view of \ref{miniremark:diff_sets}, $B$ satisfies
	\eqref{item:eq_diff_sets:eq} and the first implication is proven.

	\emph{Secondly, if $f : S \to S^\perp$ is of class~$\gamma$ and $B =
	\{ \chi + f ( \chi ) \with \chi \in S \}$
	satisfies~\eqref{item:eq_diff_sets:eq}, then $f$ satisfies
	\eqref{item:eq_diff_sets:polynomial}.}  For this purpose choose $r >
	0$ and $0 \leq \lambda < \infty$ such that
	\begin{equation*}
		\Lip ( f | \cball {\project{S}(a)}{(2+\kappa)r} ) \leq
		\lambda.
	\end{equation*}
	If $\chi \in B$ and $| \project S (\chi-a) | \leq s \leq r$, then
	there exists $x \in \Clos A$ with $|\chi-x| = \dist ( \chi,A )$ and,
	noting $\project S (x) \in \Clos \project S \lIm A \rIm$ and $| \chi-a
	| \leq ( 1 + \lambda ) s$, one infers
	\begin{equation*}
		\dist ( \project S ( \chi ), \project S \lIm A \rIm ) \leq |
		\project S (\chi-x) | \leq | \chi-x | \leq \sup \dist ( \cdot,
		A ) \lIm B \cap \cball a {(1+\lambda )s)} \rIm,
	\end{equation*}
	hence, noting $\project S \lIm B \rIm = S$, the set $\project S \lIm A
	\rIm$ is strongly pointwise differentiable of order $1$ [order
	$\gamma$] at $\project S ( a) \in \Clos \project S \lIm A \rIm$ and
	\begin{equation*}
		\Tan ( \project S \lIm A \rIm, \project S (a) ) = S
	\end{equation*}
	by \ref{miniremark:diff_sets} and \ref{remark:same_tangent_cone}.
	Moreover, if $\chi \in A$ and $| \project S ( \chi-a ) | \leq s \leq
	r$, then $| \chi-a| \leq ( 1 + \kappa) s$ and there exists $b \in B$
	such that
	\begin{equation*}
		| \chi-b | = \dist ( \chi, B ) \leq \sup \dist ( \cdot, B )
		\lIm A \cap \cball a{(1+\kappa) s } \rIm \leq ( 1 + \kappa )
		r,
	\end{equation*}
	hence, defining $x = \project S (\chi) +f ( \project S (\chi))$ and
	noting $\project{S} (x) = \project{S} (\chi)$, one obtains
	\begin{gather*}
		| \project S (b-a) | \leq | \project S (\chi-b) | + | \project
		S ( \chi-a ) |  \leq (2+\kappa)r, \\
		x \in B, \quad | x-b | \leq ( 1 + \lambda ) | \project{S}
		(x-b) | \leq ( 1 + \lambda ) | \chi-b |, \\
		| \perpproject{S} ( \chi ) - f ( \project S ( \chi ) ) | = |
		\chi - x | \leq | \chi - b | + | x-b | \leq ( 2 + \lambda ) |
		\chi - b |.
	\end{gather*}
	Therefore $f$ satisfies \eqref{item:eq_diff_sets:polynomial} and the
	second implication is proven.

	In view of the first implication with $A$, $a$, and $B$ replaced by
	$\project S \lIm A \rIm$, $\project S (a)$, and $S$, the uniqueness of
	$P$ and \eqref{item:eq_diff_sets:add_fct} follow from
	\ref{thm:poly_uniqueness} and \ref{miniremark:k_jet_for_class_gamma}.
	Therefore the preceding two implications and the initial paragraph
	yield the equivalence of
	\eqref{item:eq_diff_sets:eq}--\eqref{item:eq_diff_sets:polynomial} and
	\eqref{item:eq_diff_sets:add_fct} implies
	\eqref{item:eq_diff_sets:add_manifold} and
	\eqref{item:eq_diff_sets:add_half_manifold}.
\end{proof}

\begin{definition} \label{def:definition_pt_differential_sets}
	Suppose $\adim$ is a positive integer and $A \subset \rel^\adim$.

	Then for every positive integer $k$ the function $\pt \Der^k A$ is
	defined (see \ref{miniremark:cone} and \ref{thm:eq_diff_sets}) to be
	the function whose domain consists of all $(a,S)$ such that $a \in
	\Clos A$, the set $A$ is pointwise differentiable of order~$k$ at $a$,
	and
	\begin{equation*}
		S \in \grass \adim {\dim \Tan (A,a)}, \quad S^\perp \cap \Tan
		(A,a) = \{ 0 \}
	\end{equation*}
	and whose value at such $(a,S)$ equals the unique $\phi \in \bigodot^k
	(\rel^\adim,\rel^\adim)$ such that for some function $f : S \to
	S^\perp$ of class~$k$ there holds $\phi = \Der^k ( f \circ \project S
	) ( a)$ and
	\begin{gather*}
		\lim_{r \to 0+} r^{-1} \sup | \dist (\cdot,A) - \dist ( \cdot,
		B) | \lIm \cball ar \rIm = 0, \\
		\lim_{r \to 0+} r^{-k} \sup \dist ( \cdot, B) \lIm A \cap
		\cball ar \rIm = 0,
	\end{gather*}
	where $B = \{ \chi + f ( \chi ) \with \chi \in S \}$.  Abbreviate $\pt
	\Der^1 A = \pt \Der A$.  Moreover, the function $\pt \Der^0 A$ is
	defined to be the function whose domain consists of all $(a,S) \in (
	\Clos A ) \times \bigcup_{\vdim=0}^\adim \grass \adim \vdim$ and whose
	value at such $(a,S)$ equals $\perpproject{S} (a)$.  The value $\pt
	\Der^i A(a,S)$ is called the \emph{pointwise differential of order~$i$
	of $A$ at~$(a,S)$} whenever $i$ is a nonnegative integer and $(a,S)
	\in \dmn \pt \Der^i A$.%
	\begin{footnote}
		{Anticipating the results of this paper and its logical
		sequel~\cite{IUMJ_7645}, we remark that -- employing the
		terminology of approximate differentiation from~\cite[3.8,
		3.19]{IUMJ_7645} -- the following proposition may be deduced
		from~\cite[4.1, 4.3, 4.11]{IUMJ_7645}
		and~\ref{miniremark:cone},
		\ref{thm:eq_diff_sets}\,\eqref{item:eq_diff_sets:add_manifold}:
		\emph{Whenever $a \in \mathbf R^n$, $A \subset \mathbf R^n$,
		$k$ is a positive integer, $0 \leq \alpha \leq 1$, $\gamma =
		k$ if $\alpha = 0$ and $\gamma = ( k, \alpha )$ if $\alpha >
		0$, $A$~is approximately differentiable of order~$1$ at $a$,
		$T = \ap \Tan (A,a)$, and $m = \dim T \geq 1$, the following
		two conditions are equivalent:
		\begin{enumerate}
			\item The set $A$ is approximately differentiable of
			order~$(k,\alpha)$ at $a$.
			\item \label{item:santilli:pt} There exists a subset
			$B$ of $\mathbf R^n$ that is pointwise differentiable
			of order $\gamma$ at $a$ and satisfies the conditions
			$\Tan (B,a) = T$ and $\density^\vdim ( \mathscr H^m
			\restrict A\without B, a) = 0$.
		\end{enumerate}
		In this case, $\ap \Der^i A (a) = \pt \Der^i B (a,T)$ for $i =
		2, \ldots, k$.} In~\eqref{item:santilli:pt}, one may require
		$B \subset A$.}
	\end{footnote}
\end{definition}

\begin{remark} \label{remark:uniqueness_derivative_sets}
	If $k$ is a positive integer and $(a,S) \in \dmn \pt \Der^k A$, then,
	by \ref{thm:eq_diff_sets}\,\eqref{item:eq_diff_sets:add_manifold}, the
	values $\pt \Der^i A (a,S)$ for $i = 0, \ldots, k$ determine $\pt
	\Der^i A(a,\cdot)$ for $i = 0, \ldots, k$.
\end{remark}

\begin{corollary} \label{corollary:cmp_fct_set}
	Suppose $k$ and $\adim$ are positive integers, $\vdim$ is an integer,
	$0 \leq \vdim \leq \adim$ $0 \leq \alpha \leq 1$, $\gamma = k$ if
	$\alpha = 0$ and $\gamma = (k,\alpha)$ if $\alpha > 0$, $S \in \grass
	\adim \vdim$, $x \in X \subset S$, $f : X \to S^\perp$, $f$ is
	continuous at $x$, $A = \{ \chi + f(\chi) \with \chi \in \dmn f \}$,
	and $a = x + f(x)$.

	Then the following two conditions are equivalent.
	\begin{enumerate}
		\item \label{item:eq_diff_sets:set2} The set $A$ is pointwise
		[strongly pointwise] differentiable of order $\gamma$ at $a$,
		$\Tan (A,a) \in \grass \adim \vdim$, and $S^\perp \cap \Tan
		(A,a) = \{ 0 \}$.
		\item \label{item:eq_diff_sets:fct} The set $X$ is strongly
		pointwise differentiable of order~$1$ [order~$\gamma$] at $x$,
		$\Tan (X, x ) = S$, and there exists a polynomial function $P
		: S \to S^\perp$ of degree at most $k$ satisfying
		\begin{gather*}
			\lim_{r \to 0+} r^{-k} \sup | f-P | \lIm X \cap \cball
			xr \rIm = 0 \quad \text{if $\alpha = 0$}, \\
			\limsup_{r \to 0+} r^{-k-\alpha} \sup | f-P | \lIm X
			\cap \cball xr \rIm < \infty \quad \text{if $\alpha >
			0$}.
		\end{gather*}
	\end{enumerate}
	In this case $\Der^i ( P \circ \project S ) (a) = \pt \Der^i A (a,S)$
	for $i = 0, \ldots, k$.
\end{corollary}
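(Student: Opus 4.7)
The strategy is to reduce the statement to Theorem~\ref{thm:eq_diff_sets} by exploiting the fact that $A$ is the graph of $f$, so $\project S \lIm A \rIm = X$ and the vertical deviation $|\perpproject S (\chi) - P(\project S (\chi))|$ for $\chi = \chi_0 + f(\chi_0) \in A$ reduces to $|f(\chi_0) - P(\chi_0)|$ for $\chi_0 \in X$. The polynomial $P$ is $C^\infty$, so it is automatically of class~$\gamma$ and may play the role of the function in the third clause of Theorem~\ref{thm:eq_diff_sets}. I would first extend $P$ (trivially) to all of $S$, note that $a = x + f(x)$ and that $\project S (a) = x$, and record that under either condition the limit (or limsup) behaviour at $r \to 0+$ forces $P(x) = f(x)$, so $a \in \{\chi + P(\chi) \with \chi \in S\}$.

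For the implication \eqref{item:eq_diff_sets:fct}$\Rightarrow$\eqref{item:eq_diff_sets:set2}, I would verify directly the hypotheses of Theorem~\ref{thm:eq_diff_sets}\,\eqref{item:eq_diff_sets:polynomial} with that $P$ taken as the function there: the strong pointwise differentiability of $\project S \lIm A \rIm = X$ at $x$ is given, and the vertical closeness follows from the identification
\begin{equation*}
	\sup \big \{ | \perpproject S (\chi) - P ( \project S (\chi)) | \with \chi \in A \cap \project S^{-1} \lIm \cball xr \rIm \big \} = \sup |f - P| \lIm X \cap \cball xr \rIm.
\end{equation*}
To verify the cone hypothesis of Theorem~\ref{thm:eq_diff_sets} I would use that $|f - P|$ being $O(|{\cdot} - x|)$ on $X$ near $x$ (immediate from the approximation condition together with $P$ being $C^\infty$ and $f(x) = P(x)$) implies $A$ lies locally in $\{\chi \with |\perpproject S (\chi - a)| \leq \kappa |\project S (\chi - a)|\}$ for some $\kappa$; since pointwise differentiability is local, we may replace $A$ by $A \cap \cball{a}{r_0}$ and apply Theorem~\ref{thm:eq_diff_sets}. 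From the resulting submanifold $B = \{\chi + P(\chi) \with \chi \in S\}$ one reads off that $\dim \Tan(A,a) = \vdim$ and $S^\perp \cap \Tan(A,a) = \{0\}$.

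For the converse \eqref{item:eq_diff_sets:set2}$\Rightarrow$\eqref{item:eq_diff_sets:fct}, I would invoke \ref{miniremark:cone} to obtain the cone condition near $a$, apply Theorem~\ref{thm:eq_diff_sets}\,\eqref{item:eq_diff_sets:eq}$\Rightarrow$\eqref{item:eq_diff_sets:polynomial} to $A \cap \cball{a}{r_0}$, and use the continuity of $f$ at $x$ to identify $\project S \lIm A \cap \cball{a}{r_0} \rIm$ with $X$ in a neighbourhood of $x$; this transfers the strong pointwise differentiability of the projection to $X$. The function $\tilde f : S \to S^\perp$ of class~$\gamma$ supplied by Theorem~\ref{thm:eq_diff_sets} may, by \eqref{item:eq_diff_sets:add_fct}, be replaced by its $k$~jet at $x$, which is the required polynomial $P$; the vertical closeness then translates back to the desired bound on $|f - P|$ over $X$. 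The concluding identity $\Der^i (P \circ \project S)(a) = \pt \Der^i A(a,S)$ is immediate from Definition~\ref{def:definition_pt_differential_sets} and \ref{thm:eq_diff_sets}\,\eqref{item:eq_diff_sets:add_fct}, since the $k$~jet of any admissible class~$k$ function at $x$ equals $P$ and hence its composition with $\project S$ has the same derivatives at $a$ up to order $k$. The main obstacle is the global cone hypothesis of Theorem~\ref{thm:eq_diff_sets}, which is circumvented cleanly in both directions by passing to $A \cap \cball{a}{r_0}$ using the locality of the definitions together with the continuity of $f$.
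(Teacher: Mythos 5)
Your proposal is correct and takes essentially the same route as the paper: the paper likewise localises $A$ near $a$ (via \ref{miniremark:cone} and the continuity of $f$ at $x$) so that the cone hypothesis of \ref{thm:eq_diff_sets} holds, identifies $\project S \lIm A \rIm$ with $X$ near $x$, and then reads off the equivalence and the postscript from \ref{remark:same_tangent_cone} and \ref{thm:eq_diff_sets}\,\eqref{item:eq_diff_sets:add_manifold}\,\eqref{item:eq_diff_sets:add_fct}. Your explicit derivation of the cone condition from $f(x)=P(x)$ and the polynomial closeness in the direction \eqref{item:eq_diff_sets:fct}$\Rightarrow$\eqref{item:eq_diff_sets:set2} merely fills in a step the paper leaves implicit.
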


\begin{proof}
	As \eqref{item:eq_diff_sets:set2} implies the existence of $r > 0$, $0
	\leq \kappa < \infty$, and $s > 0$ such that
	\begin{equation*}
		A \cap \cball ar \subset \big \{ \chi \with | \perpproject{S}
		( \chi - a ) | \leq \kappa | \project S ( \chi - a ) | \big
		\}, \quad A \cap \project S^{-1} \lIm \cball {\project S (a)}
		s \rIm \subset \cball ar
	\end{equation*}
	by \ref{miniremark:cone} and the continuity of $f$ at $x$
	respectively, the equivalence and the postscript follow from
	\ref{remark:same_tangent_cone} and
	\ref{thm:eq_diff_sets}\,\eqref{item:eq_diff_sets:add_manifold}\,\eqref{item:eq_diff_sets:add_fct}.
\end{proof}

\begin{remark} \label{remark:cmp_fct_set}
	If $X$ is a neighbourhood of $x$ in $S$, then
	\eqref{item:eq_diff_sets:fct} is equivalent to the requirement that
	$f$ is pointwise differentiable of order~$\gamma$ at $x$.
\end{remark}

\begin{remark}
	In view of \ref{miniremark:cone} and
	\ref{thm:eq_diff_sets}\,\eqref{item:eq_diff_sets:add_fct}, it is clear
	that if $f$ is not continuous at~$x$ and $0 < \vdim < \adim$ it may
	happen that \eqref{item:eq_diff_sets:set2} is satisfied but
	$\limsup_{\chi \to x} |f(\chi)| = \infty$ in which case
	\eqref{item:eq_diff_sets:fct} does not hold.
\end{remark}

\begin{example} \label{example:alexandrov}
	If $A$ is an $\vdim+1$ dimensional convex subset of $\rel^\adim$, then
	the relative boundary of $A$ is strongly pointwise differentiable of
	order~$2$ at $\mathscr{H}^\vdim$~almost all of its points; in fact,
	one may employ Alexandrov's theorem, see for instance
	\cite[Theorem~6.9]{MR3409135}, to deduce this from
	\ref{corollary:cmp_fct_set} and \ref{remark:cmp_fct_set}.
\end{example}

\begin{lemma} \label{lemma:converge_to_hom_poly_fct}
	Suppose $k$ and $\adim$ are positive integers, $\vdim$ is an integer,
	$0 \leq \vdim \leq \adim$, $S \in \grass \adim \vdim$, $A \subset
	\rel^\adim$, $P : S \to S^\perp$ is a homogeneous polynomial
	function of degree~$k$, $B = \{ \chi + P ( \chi ) \with \chi \in S
	\}$, and $\beta_s : \rel^\adim \to \rel^\adim$ satisfy
	\begin{equation*}
		\beta_s (x) = s^{-1} \project S (x) + s^{-k} \perpproject S
		(x) \quad \text{for $x \in \rel^\adim$ and $0 < s < \infty$}.
	\end{equation*}

	Then the following two conditions are equivalent.
	\begin{enumerate}
		\item \label{item:convergence_to_hom_poly_fct:diff} The set
		$A$ is pointwise differentiable of order~$k$ at $0$ and
		\begin{equation*}
			\pt \Der^i A(0,S) = \Der^i ( P \circ \project S ) (0)
			\quad \text{for $i = 0, \ldots, k$}.
		\end{equation*}
		\item \label{item:convergence_to_hom_poly_fct:blow_up} There
		holds
		\begin{equation*}
			\lim_{s \to 0+} \dist (x,\beta_s \lIm A \rIm) = \dist
			(x,B) \quad \text{for $x \in \rel^\adim$}.
		\end{equation*}
	\end{enumerate}
\end{lemma}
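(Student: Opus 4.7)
The plan hinges on the identity $\beta_s \lIm B \rIm = B$, valid for every $s > 0$ thanks to the homogeneity of $P$; indeed,
\begin{equation*}
	\beta_s ( \chi + P ( \chi ) ) = s^{-1} \chi + s^{-k} P ( \chi ) = s^{-1} \chi + P ( s^{-1} \chi ) \quad \text{for $\chi \in S$},
\end{equation*}
and $\beta_s$ is invertible. Observing that $\dist ( x, \beta_s \lIm A \rIm )$ depends, for $s$ small, only on $A$ in an arbitrarily small neighbourhood of $0$, one may replace $A$ by $A \cap \cball 0r$ for any sufficiently small $r$. Combined with \ref{miniremark:cone} and \ref{thm:eq_diff_sets}\,\eqref{item:eq_diff_sets:polynomial} (applied with $f = P$), condition \eqref{item:convergence_to_hom_poly_fct:diff} is then equivalent to the conjunction of (i) a cone condition $A \subset \{ \chi \with | \perpproject S ( \chi ) | \leq \kappa | \project S ( \chi ) | \}$ near $0$, (ii) strong pointwise differentiability of order~$1$ of $\project S \lIm A \rIm$ at $0$ with tangent $S$, and (iii) $\sup \{ | \perpproject S ( \chi ) - P ( \project S ( \chi ) ) | \with \chi \in A,\ | \project S ( \chi ) | \leq r \} = o(r^k)$.

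For $(1) \Rightarrow (2)$, fix $x \in \rel^\adim$ and choose $\bar\chi \in S$ with $\dist ( x, B ) = | x - \bar\chi - P ( \bar\chi ) |$. By~(ii) there exist $\chi_s \in A$ with $\project S ( \chi_s ) = s \bar\chi + o(s)$; condition~(iii) combined with the homogeneity and local Lipschitz behaviour of $P$ then yields $\perpproject S ( \chi_s ) = s^k P ( \bar\chi ) + o(s^k)$, whence $\beta_s ( \chi_s ) \to \bar\chi + P ( \bar\chi )$ and $\limsup_{s \to 0+} \dist ( x, \beta_s \lIm A \rIm ) \leq \dist ( x, B )$. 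Conversely, for any near-minimiser $y_s = \beta_s ( \chi_s ) \in \beta_s \lIm A \rIm$ with $| y_s - x |$ bounded, the relations $\project S ( \chi_s ) = s \project S ( y_s )$ and $\perpproject S ( \chi_s ) = s^k \perpproject S ( y_s )$ combined with~(iii) and the homogeneity of $P$ force $| \perpproject S ( y_s ) - P ( \project S ( y_s ) ) | = o(1)$, so $\dist ( y_s, B ) \to 0$ and $\liminf_{s \to 0+} \dist ( x, \beta_s \lIm A \rIm ) \geq \dist ( x, B )$.

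For $(2) \Rightarrow (1)$, since each $\dist ( \cdot, \beta_s \lIm A \rIm )$ is $1$-Lipschitz, Arzel\`a--Ascoli upgrades pointwise convergence to uniform convergence on compact subsets of $\rel^\adim$. To derive~(i), suppose for contradiction some sequence $\chi_n \in A$ satisfies $\chi_n \to 0$ with $| \perpproject S ( \chi_n ) | / | \project S ( \chi_n ) |^k \to \infty$ (or $\project S ( \chi_n ) = 0 \neq \perpproject S ( \chi_n )$); setting $s_n = | \perpproject S ( \chi_n ) |^{1/k} \to 0$, the points $y_n = \beta_{s_n} ( \chi_n )$ satisfy $| \perpproject S ( y_n ) | = 1$ and $\project S ( y_n ) \to 0$, so a subsequential limit $\bar y$ lies in $S^\perp$ with $| \bar y | = 1$ and $\dist ( \bar y, B ) = 0$; but $B \cap S^\perp = \{ 0 \}$ (since $P ( 0 ) = 0$), a contradiction. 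Once~(i) is in hand, for $\chi \in A$ with $| \project S ( \chi ) | \leq s$ and $s$ small, the rescaled point $y = \beta_s ( \chi )$ lies in the fixed compact set $K = \{ y \with | \project S ( y ) | \leq 1,\ | \perpproject S ( y ) | \leq C \}$ with $C$ depending on the cone constant; uniform convergence on~$K$ together with local Lipschitz continuity of $P$ on~$K$ yields~(iii) via the homogeneity $P ( s \eta ) = s^k P ( \eta )$. Finally, uniform convergence on the compact set $\{ \bar\chi + P ( \bar\chi ) \with \bar\chi \in S,\ | \bar\chi | \leq 1 \} \subset B$ produces points $\chi_s \in A$ with $\project S ( \chi_s ) = s \bar\chi + o(s)$ uniformly in $\bar\chi$, establishing~(ii).

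The main difficulty lies in the direction $(2) \Rightarrow (1)$: \eqref{item:convergence_to_hom_poly_fct:blow_up} provides only pointwise convergence of distance functions, so the passage to the uniform $o(r)$ and $o(r^k)$ rates required by \ref{thm:eq_diff_sets}\,\eqref{item:eq_diff_sets:polynomial} depends on the joint use of the Arzel\`a--Ascoli upgrade, the cone condition (to confine the rescaled points to a fixed compact region), and the non-degeneracy $B \cap S^\perp = \{ 0 \}$ forced by $P ( 0 ) = 0$.
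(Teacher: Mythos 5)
Your proposal follows essentially the same route as the paper: both directions are funnelled through \ref{miniremark:cone} and the characterisation \ref{thm:eq_diff_sets}\,\eqref{item:eq_diff_sets:polynomial}\,\eqref{item:eq_diff_sets:add_fct} with $f = P$; the implication \eqref{item:convergence_to_hom_poly_fct:diff}$\Rightarrow$\eqref{item:convergence_to_hom_poly_fct:blow_up} is a two-sided distance estimate exploiting $\beta_s \lIm B \rIm = B$ and the homogeneity of $P$; and \eqref{item:convergence_to_hom_poly_fct:blow_up}$\Rightarrow$\eqref{item:convergence_to_hom_poly_fct:diff} first upgrades to locally uniform convergence (the paper cites \cite[2.10.21]{MR41:1976} for this Ascoli step) and then exploits scale invariance. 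The paper's variant of your blow-up step is to intersect $\beta_s \lIm A \rIm$ with the $\beta_s$-invariant set $C = \{ x \with | \perpproject S (x) - P ( \project S (x) ) | \geq \varepsilon | \project S (x) |^k \}$, which meets $B$ only at $0$; this yields the cone condition and the vertical $o(r^k)$ closeness in one stroke, whereas you obtain them in two steps.

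One point in your \eqref{item:convergence_to_hom_poly_fct:blow_up}$\Rightarrow$\eqref{item:convergence_to_hom_poly_fct:diff} argument needs tightening: the confinement of $\beta_s ( \chi )$, for $\chi \in A$ with $| \project S ( \chi ) | \leq s$, to a fixed compact set does \emph{not} follow from the linear cone condition (i) when $k \geq 2$, since that condition only gives $| \perpproject S ( \beta_s ( \chi ) ) | \leq \kappa s^{1-k}$, which is unbounded as $s \to 0+$. What you need is the stronger bound $| \perpproject S ( \chi ) | \leq \kappa' | \project S ( \chi ) |^k$ for $\chi \in A$ near $0$ — and your contradiction argument in fact proves exactly this, because its hypothesis (a sequence with $| \perpproject S ( \chi_n ) | / | \project S ( \chi_n ) |^k \to \infty$) is precisely the negation of that bound. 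So record the degree-$k$ bound as the conclusion of the blow-up step (it implies (i) for small $| \chi |$); with it, the compact confinement, hence (iii) and the remainder of your argument, goes through as written.
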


\begin{proof}
	Notice that $\beta_s^{-1} = \beta_{1/s}$ and $\beta_s \lIm B \rIm = B$
	for $0 < s < \infty$ and
	\begin{equation*}
		\Lip \beta_s \leq s^{-k}, \quad \beta_{1/s} \lIm \cball 0r
		\rIm \subset \cball 0{sr}, \quad \cball 0{r/s} \subset \beta_s
		\lIm \cball 0r \rIm,
	\end{equation*}
	whenever $0 < s \leq 1$ and $0 < r < \infty$, in particular
	\begin{gather*}
		\dist ( \beta_s ( a), B ) \leq s^{-k} \dist (a,B) \quad
		\text{for $a \in A$}, \\
		\sup \dist ( \cdot, B ) \big \lIm \beta_s \lIm A \rIm \cap
		\cball 0r \big \rIm \leq s^{-k} \sup \dist ( \cdot, B ) \lIm A
		\cap \cball 0 {sr} \rIm.
	\end{gather*}

	Suppose \eqref{item:convergence_to_hom_poly_fct:diff} holds.  To prove
	\eqref{item:convergence_to_hom_poly_fct:blow_up}, one may assume
	\begin{equation*}
		A \subset \big \{ \chi \with | \perpproject S ( \chi ) | \leq
		\kappa | \project S (\chi) | \big \} \quad \text{for some $0
		\leq \kappa < \infty$}
	\end{equation*}
	by \ref{remark:same_tangent_cone} and \ref{miniremark:cone}.  Whenever
	$0 < r < \infty$ one estimates
	\begin{multline*}
		\sup \dist ( \cdot, \beta_s \lIm A \rIm ) \lIm B \cap \cball
		0r \rIm \leq \lambda s^{-1} \sup \dist ( \cdot, \project S
		\lIm A \rIm) \lIm S \cap \cball 0 {sr} \rIm \\
		+ s^{-k} \sup \big \{ | \perpproject S(a) - P ( \project S (a)
		) | \with a \in A, | \project S (a) | \leq 2sr \big \}
	\end{multline*}
	for $0 < s \leq 1$, where $\lambda = 1 + \Lip ( P | \cball 0{2r} )$;
	in fact, it is sufficient to notice
	\begin{gather*}
		b-\beta_s(a) = \project{S} ( b-s^{-1} a ) + P ( \project{S}
		(b)) - P ( \project S (s^{-1} a ) ) + s^{-k} ( P ( \project S
		(a)) - \perpproject S ( a )), \\
		| b- \beta_s(a)| \leq \lambda s^{-1} | \project S(sb) -
		\project{S} ( a ) | + s^{-k} | \perpproject S(a) - P (
		\project S(a) ) |
	\end{gather*}
	whenever $b \in B \cap \cball 0r$, $a \in A$, and $|\project S(a)|
	\leq 2sr$, as $0 \in \Clos A$ by \ref{remark:same_tangent_cone} implies
	\begin{equation*}
		0 \in \Clos \project S \lIm A \rIm, \quad \dist ( \project S
		(sb), \project S \lIm A \rIm \cap \cball 0 {2sr}) = \dist (
		\project S (sb), \project S \lIm A \rIm ).
	\end{equation*}
	In conjunction with the first paragraph,
	\eqref{item:convergence_to_hom_poly_fct:blow_up} now follows using
	\ref{miniremark:diff_sets} and
	\ref{thm:eq_diff_sets}\,\eqref{item:eq_diff_sets:add_fct}.

	Conversely, suppose \eqref{item:convergence_to_hom_poly_fct:blow_up}
	holds.  In order to prove
	\eqref{item:convergence_to_hom_poly_fct:diff}, firstly notice that $0
	\in \Clos A$ as $0 \in B$.  Next, the following assertion will be
	shown:
	\begin{gather*}
		A \cap \cball 0r \subset \big \{ \chi \with | \perpproject S (
		\chi ) | \leq \kappa | \project S ( \chi ) | \big \} \quad
		\text{for some $r > 0$ and $0 \leq \kappa < \infty$}, \\
		\lim_{A \owns x \to 0} | \perpproject S (x) - P ( \project S
		(x)) | / | \project S (x) |^k = 0.
	\end{gather*}
	For this purpose let $\varepsilon > 0$ and $C = \rel^\adim \cap \big
	\{ x \with | \perpproject{S} (x) - P ( \project S (x) ) | \geq
	\varepsilon | \project S (x) |^k \big \}$.  Clearly, $B \cap C = \{ 0
	\}$ and $\beta_s \lIm C \rIm = C$ for $0 < s < \infty$.  Since, by
	\cite[\hyperlink{2_10_21}{2.10.21}]{MR41:1976},
	\begin{equation*}
		\dist ( \cdot, \beta_s \lIm A \rIm ) \to
		\dist ( \cdot, B ) \quad \text{uniformly on compact subsets of
		$\rel^\adim$ as $s \to 0+$}
	\end{equation*}
	there exists $0 < s < \infty$ with $\beta_t \lIm A \rIm \cap C
	\cap \{ x \with |x| = 1 \} = \varnothing$ for $0 < t \leq s$, hence
	\begin{equation*}
		A \cap \beta_{1/s} \lIm \cball 01 \rIm \cap C \subset \{ 0 \}
	\end{equation*}
	as $\beta_{1/s} \lIm \cball 01 \rIm \without \{ 0 \} = \bigcup \big \{
	\beta_{1/t} \lIm \rel^\adim \cap \{ x \with |x| = 1 \} \rIm \with 0 <
	t \leq s \big \}$, and the assertion follows.  In particular, one may
	assume $A \subset \big \{ \chi \with | \perpproject S ( \chi ) | \leq
	\kappa | \project S ( \chi ) | \big \}$.  Noting
	\begin{gather*}
		s^{-1} \dist ( \project S(sx), \project S \lIm A \rIm ) \leq |
		\project S (x-\beta_s(a)) | \leq | x- \beta_s (a) | \quad
		\text{for $a \in A$, $x \in \rel^\adim$}, \\
		s^{-1} \sup \dist ( \cdot, \project S \lIm A
		\rIm ) \lIm S\cap \cball 0s \rIm \leq \sup \dist ( \cdot,
		\beta_s \lIm A \rIm ) \big \lIm B \cap \project
		S^{-1} \lIm \cball 01 \rIm \big \rIm
	\end{gather*}
	for $0 < s < \infty$, the set $\project S \lIm A \rIm$ is strongly
	pointwise differentiable of order~$1$ at $0$ with $\Tan ( \project S
	\lIm A \rIm, 0) = S$ by \ref{miniremark:diff_sets} and
	\ref{remark:same_tangent_cone}.  Consequently,
	\ref{thm:eq_diff_sets}\,\eqref{item:eq_diff_sets:add_fct} yields
	\eqref{item:convergence_to_hom_poly_fct:diff}. 
\end{proof}

\begin{remark} \label{remark:char_pt_diff_order_1}
	It follows by \ref{remark:same_tangent_cone}, \ref{miniremark:cone},
	and \ref{thm:eq_diff_sets}\,\eqref{item:eq_diff_sets:add_manifold}
	that a subset $A$ of $\rel^\adim$ is pointwise differentiable of
	order~$1$ at $a$ if and only if $A_s = \rel^\adim \cap \{ x \with a +
	sx \in A \}$ satisfy
	\begin{equation*}
		\lim_{s \to 0+} \dist (x,A_s) = \dist (x,T) \quad
		\text{whenever $x \in \rel^\adim$}
	\end{equation*}
	for some subspace $T$ of $\rel^\adim$.  In this case $T = \Tan(A,a)$.
	$\big ($But this condition on $\Tan(A,a)$ does not imply pointwise
	differentiability of order~$1$ of $A$ at $a$.$\big)$
\end{remark}

\begin{miniremark} \label{miniremark:graphical_planes}
	Suppose $\adim$ is a positive integer, $\vdim$ is an integer, $0 \leq
	\vdim \leq \adim$, $S \in \grass \adim \vdim$, $T \subset \rel^\adim$,
	$h \in \Hom ( S, S^\perp )$, and $L = \id{\rel^\adim} - h \circ
	\project S$.  Then $S^\perp \cap T = \{ 0 \}$ if and only if $S^\perp
	\cap L \lIm T \rIm = \{ 0 \}$.
\end{miniremark}

\begin{lemma} \label{lemma:substraction_lemma}
	Suppose $k$ and $\adim$ are positive integers, $\vdim$ is an integer,
	$0 \leq \vdim \leq \adim$, $A \subset \rel^\adim$, $S \in \grass \adim
	\vdim$, $f : S \to S^\perp$ of class~$k$, $B = \{ x - f(\project S(x))
	\with x \in A \}$, $a \in \rel^\adim$, and $b = a-f(\project{S}(a))$.

	Then the following two statements are equivalent.
	\begin{enumerate}
		\item The set $A$ is pointwise differentiable of order~$k$ at
		$a$, $\Tan (A,a) \in \grass \adim \vdim$, and $S^\perp \cap
		\Tan (A,a) = \{ 0 \}$.
		\item The set $B$ is pointwise differentiable of order~$k$ at
		$b$, $\Tan(B,b) \in \grass \adim \vdim$, and $S^\perp \cap
		\Tan (B,a) = \{ 0 \}$.
	\end{enumerate}
	In this case $\pt \Der^i A ( a,S ) = \pt \Der^i B(b,S) + \Der^i ( f
	\circ \project S) (a)$ for $i = 0, \ldots, k$.
\end{lemma}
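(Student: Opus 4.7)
The plan is to encode the passage from $A$ to $B$ via a global diffeomorphism of $\rel^\adim$ and then transport everything through the invariance property \ref{remark:diffeo_invariance} and the local graph characterisation \ref{thm:eq_diff_sets}\,\eqref{item:eq_diff_sets:add_fct}.  Define $\Phi : \rel^\adim \to \rel^\adim$ by $\Phi (x) = x - f( \project S (x))$.  Since $f$ takes values in $S^\perp$, one has $\project S \circ \Phi = \project S$, whence $\Phi$ admits the two-sided inverse $\Phi^{-1} (y) = y + f(\project S (y))$.  Both $\Phi$ and $\Phi^{-1}$ are of class~$k$, so $\Phi$ is a diffeomorphism of class~$k$ of $\rel^\adim$ onto itself with $\Phi ( a) = b$ and $\Phi \lIm A \rIm = B$.

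First, applying \ref{remark:diffeo_invariance} to $\Phi$ and to $\Phi^{-1}$ yields that $A$ is pointwise differentiable of order~$k$ at $a$ if and only if $B$ is pointwise differentiable of order~$k$ at $b$.  Next, $\Der \Phi ( a) = \id{\rel^\adim} - \Der f(\project S (a)) \circ \project S$ is a linear automorphism sending $\Tan(A,a)$ bijectively onto $\Tan(B,b)$, so $\dim \Tan (A,a) = \dim \Tan(B,b)$; moreover, \ref{miniremark:graphical_planes} applied with $h = \Der f(\project S (a))$ and $T = \Tan(A,a)$ gives that $S^\perp \cap \Tan(A,a) = \{0\}$ if and only if $S^\perp \cap \Tan(B,b) = \{0\}$.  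Together these establish the equivalence of~(1) and~(2).

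Finally, I identify the differentials via the graph characterisation.  Assuming the equivalent conditions hold, \ref{miniremark:cone} and \ref{thm:eq_diff_sets}\,\eqref{item:eq_diff_sets:add_fct} furnish a function $g : S \to S^\perp$ of class~$k$ whose $k$~jet at $\project S (a)$ realises the representation characterising pointwise differentiability of $A$ at $(a,S)$, so $\pt \Der^i A (a,S) = \Der^i ( g \circ \project S ) ( a )$ for $i = 0, \ldots, k$.  Because $\Phi ( \chi + g ( \chi ) ) = \chi + ( g ( \chi) - f( \chi ) )$ for $\chi \in S$, the function $g - f : S \to S^\perp$ plays the analogous role at $(b,S)$ for $B$, giving $\pt \Der^i B(b,S) = \Der^i ((g-f) \circ \project S)(b)$.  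Since $\project S (a) = \project S ( b)$, the values at $a$ and at $b$ of $\Der^i ( g \circ \project S )$ and of $\Der^i ( f \circ \project S )$ coincide, and rearranging produces $\pt \Der^i A ( a, S ) = \pt \Der^i B( b,S ) + \Der^i ( f \circ \project S ) (a)$.  The main delicacy will be verifying cleanly that $g - f$ indeed witnesses the characterisation of \ref{thm:eq_diff_sets} for $B$ near $b$; this reduces to transporting the cone containment through $\Phi$ and using that $\Phi$ is bi-Lipschitz on a neighbourhood of $a$, so the suprema defining the pointwise approximation transform by harmless constants.
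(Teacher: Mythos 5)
Your argument is correct and essentially reproduces the paper's own proof: the same shear $x \mapsto x - f(\project S (x))$, the diffeomorphism invariance \ref{remark:diffeo_invariance} combined with \ref{miniremark:graphical_planes} and the tangent-cone mapping property for the equivalence of (1) and (2), and \ref{miniremark:cone} together with \ref{thm:eq_diff_sets}\,\eqref{item:eq_diff_sets:add_fct} for the identity between the differentials. The ``delicacy'' you flag at the end is in fact immediate and needs no bi-Lipschitz constants: since $\project S \circ \Phi = \project S$ and $\project S (a) = \project S (b)$, the cylinder-based suprema in \ref{thm:eq_diff_sets}\,\eqref{item:eq_diff_sets:polynomial} for $B$ and $g-f$ at $b$ are literally equal to those for $A$ and $g$ at $a$, and $\project S \lIm B \rIm = \project S \lIm A \rIm$, so $g-f$ witnesses the characterisation verbatim.
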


\begin{proof}
	Define $g : \rel^\adim \to \rel^\adim$ by $g (x) = x - f ( \project S
	( x) )$ for $x \in \rel^\adim$.  Noting $\Tan (B,b) = \Der g (a) \lIm
	\Tan (A,a) \rIm$ by \cite[\hyperlink{3_1_21}{3.1.21}]{MR41:1976}, the
	principal conclusion follows from \ref{remark:diffeo_invariance} and
	\ref{miniremark:graphical_planes}.  The postscript then is a
	consequence of \ref{miniremark:cone} and
	\ref{thm:eq_diff_sets}\,\eqref{item:eq_diff_sets:add_fct}.
\end{proof}

\begin{theorem} \label{thm:inductive_blowup}
	Suppose $k$ and $\adim$ are positive integers, $\vdim$ is an integer
	with $0 \leq \vdim \leq \adim$, $a \in \mathbf R^n$, $A \subset
	\rel^\adim$, the set $A$ is pointwise differentiable of order~$1$ at
	$a$, $S \in \grass \adim \vdim$, and $P_1 \in \Hom ( S, S^\perp )$
	satisfies $\Tan (A,a) = \{ \chi + P_1 ( \chi ) \with \chi \in S \}$.

	Then the following two conditions are equivalent.
	\begin{enumerate}
		\item \label{item:inductive_blowup:diff} The set $A$ is
		pointwise differentiable of order~$k$ at $a$.
		\item \label{item:inductive_blowup:blowup} There exist
		homogeneous polynomial functions $P_i : S \to S^\perp$ of
		degree~$i$ for $i = 2, \ldots, k$ such that the following
		condition holds: If
		\begin{equation*}
			A_1 = \{ x - a \with x \in A \}, \quad A_i = \{ x -
			P_{i-1} ( \project S (x) ) \with x \in A_{i-1} \}
		\end{equation*}
		for $i =2, \ldots, k$, then $B_i = \{ \chi + P_i ( \chi )
		\with \chi \in S \}$ satisfy
		\begin{equation*}
			\lim_{s \to 0+} \dist (x, \beta_{i,s} \lIm A_i \rIm) =
			\dist (x,B_i) \quad \text{for $x \in \rel^\adim$ and
			$i = 2, \ldots, k$},
		\end{equation*}
		where $\beta_{i,s} : \rel^\adim \to \rel^\adim$ and
		$\beta_{i,s} (x) = s^{-1} \project S (x)+s^{-i}
		\perpproject{S} (x)$ for $x \in \rel^\adim$.
	\end{enumerate}
	In this case $\pt \Der^i A(a,S) = \Der^i (P_i \circ \project S) (0)$
	for $i = 1, \ldots, k$.
\end{theorem}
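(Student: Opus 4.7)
The plan is to translate to $a = 0$ and use Lemmas \ref{lemma:substraction_lemma} and \ref{lemma:converge_to_hom_poly_fct} as the twin engines: the first to pass between differentiability of $A$ at $a$ and of each $A_i$ at $0$, and the second to translate between blow-up convergence and pointwise differentiability of order $i$ of $A_i$ at $0$. The starting observation is that, since each $P_j$ maps $S$ into $S^\perp$, one has $\project{S} \circ P_j = 0$; consequently an easy induction on $i$ yields
\begin{equation*}
	A_i = \bigl\{ y - \tilde{P}_{i-1}(\project{S}(y - a)) \with y \in A \bigr\} \quad \text{for } i = 2, \ldots, k,
\end{equation*}
where $\tilde{P}_j = P_1 + P_2 + \cdots + P_j$. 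Thus each $A_i$ is obtained from $A - a$ by the single substitution of the form covered by Lemma \ref{lemma:substraction_lemma} with $f = \tilde{P}_{i-1}$.

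For the implication \eqref{item:inductive_blowup:diff} $\Rightarrow$ \eqref{item:inductive_blowup:blowup}, I would take $P_2, \ldots, P_k$ to be the homogeneous components of degree $\geq 2$ of the unique polynomial of degree $\leq k$ provided by \ref{thm:eq_diff_sets}\,\eqref{item:eq_diff_sets:polynomial}, noting that the linear part of this $k$-jet encodes the tangent plane and so must coincide with the given $P_1$. Lemma \ref{lemma:substraction_lemma} with $f = \tilde{P}_{i-1}$ then gives $A_i$ pointwise differentiable of order $k$ at $0$ with
\begin{equation*}
	\pt \Der^j A_i(0, S) = \Der^j\bigl((P_i + P_{i+1} + \cdots + P_k) \circ \project{S}\bigr)(0) \quad \text{for } j = 0, \ldots, k.
\end{equation*}
For $j \leq i$ each summand $P_l$ with $l > j$ contributes $0$ at the origin, so $\pt \Der^j A_i(0, S)$ vanishes for $j < i$ and equals $\Der^i(P_i \circ \project{S})(0)$ for $j = i$. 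This is exactly the hypothesis of Lemma \ref{lemma:converge_to_hom_poly_fct} applied with $i$ and $P_i$ in place of $k$ and $P$ there, so the required blow-up convergence $\dist(x, \beta_{i,s} \lIm A_i \rIm) \to \dist(x, B_i)$ as $s \to 0+$ follows.

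Conversely, it suffices to use the blow-up convergence at $i = k$: Lemma \ref{lemma:converge_to_hom_poly_fct} upgrades it to pointwise differentiability of order $k$ of $A_k$ at $0$ with $\pt \Der^j A_k(0, S) = \Der^j(P_k \circ \project{S})(0)$ for $j = 0, \ldots, k$, and the reverse direction of Lemma \ref{lemma:substraction_lemma} with $f = \tilde{P}_{k-1}$ transfers this to pointwise differentiability of order $k$ of $A$ at $a$; the postscript identities $\pt \Der^j A(a, S) = \Der^j(P_j \circ \project{S})(0)$ follow from the substraction formula upon noting that, for each $j \leq k$, only the degree-$j$ summand of $\tilde{P}_{k-1}$ contributes a nonzero $j$-th derivative at the origin. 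The main technical point in either direction is the verification of the transversality conditions $S^\perp \cap \Tan(\cdot, \cdot) = \{ 0 \}$ demanded by Lemma \ref{lemma:substraction_lemma}: for $A$ at $a$ this is immediate from $\Tan(A, a) = \{ \chi + P_1(\chi) \with \chi \in S \}$, while for $A_k$ at $0$ it is supplied automatically by Lemma \ref{lemma:converge_to_hom_poly_fct}, whose output value $\pt \Der^j A_k(0, S)$ is defined via \ref{def:definition_pt_differential_sets} only under this transversality.
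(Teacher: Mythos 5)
Your proposal is correct and follows essentially the same route as the paper's proof: both directions run on Lemmas \ref{lemma:substraction_lemma} and \ref{lemma:converge_to_hom_poly_fct}, with $P_2,\ldots,P_k$ taken as the homogeneous components of the jet of pointwise differentials, exactly as in the paper (which applies the subtraction lemma step by step rather than collapsing the compositions via $\project S \circ P_j = 0$, and in the converse invokes the blow-up for every $i$ rather than only $i = k$ --- both harmless variants). The only blemish is cosmetic: your displayed formula $A_i = \{ y - \tilde P_{i-1}(\project S (y-a)) \with y \in A\}$ omits the translation term $-a$ and is literally correct only after the announced normalisation $a = 0$.
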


\begin{proof}
	Assume $a = 0$.  Notice that $\pt \Der A (0,S) = \Der ( P_1
	\circ \project S ) (0)$ by \ref{lemma:converge_to_hom_poly_fct} and
	\ref{remark:char_pt_diff_order_1}.

	Suppose \eqref{item:inductive_blowup:diff} holds and define $P_i : S
	\to S^\perp$ by
	\begin{equation*}
		P_i(\chi) = \langle \chi^i/i!, \pt \Der^i A(0,S) \rangle \quad
		\text{for $\chi \in S$  and $i = 2, \ldots, k$}.
	\end{equation*}
	Then \ref{lemma:substraction_lemma} inductively implies that the sets
	$A_i$ are pointwise differentiable of order~$k$ at $0$ and
	\begin{equation*}
		\pt \Der^j A_i (0,S) = \tsum{l=i}k \Der^j ( P_l \circ \project
		S ) ( 0 ) \quad \text{for $i = 1, \ldots, k$ and $j = 0,
		\ldots, k$},
	\end{equation*}
	hence applying \ref{lemma:converge_to_hom_poly_fct} with $k$, $A$, and
	$P$ replaced by $i$, $A_i$, and $P_i$ yields
	\eqref{item:inductive_blowup:blowup}.

	Now, suppose \eqref{item:inductive_blowup:blowup} holds.  Then $A_i$
	is pointwise differentiable of order~$i$ and
	\begin{equation*}
		\pt \Der^j A_i (0,S) = \Der^j ( P_i \circ \project S ) (0)
		\quad \text{for $j = 0, \ldots, i$ and $i = 1, \ldots, k$}
	\end{equation*}
	by \ref{lemma:converge_to_hom_poly_fct}, hence
	\ref{lemma:substraction_lemma} yields
	\eqref{item:inductive_blowup:diff} and the postscript.
\end{proof}

\begin{remark} \label{remark:equiv_convergence_sets}
	The convergence in \eqref{item:inductive_blowup:blowup} is equivalent
	(by \ref{miniremark:diff_sets} and
	\cite[\hyperlink{2_10_21}{2.10.21}]{MR41:1976}) to ``convergence
	locally in Hausdorff distance'' of $\Clos \beta_{i,s} \lIm A_i \rIm$
	to $B_i$ as $s \to 0+$, see David \cite[3.1]{MR1979936} for a
	definition in the case of sequences of closed sets, and to
	``Kuratowski convergence'' of the same sets, see Beer
	\cite[Theorem~1]{MR801600}.
\end{remark}

\section{Higher order differentiability theory for functions}
\label{sect:diff_fcts}

This section provides the main differentiability theorem for functions in
\ref{thm:pt_diff_functions} which serves as a model for the case of sets
treated in Section~\ref{sect:diff_sets}.  In fact, the theorems employed in
its proof, \ref{thm:borel_maps}, \ref{thm:isakov}, and
\ref{thm:big_O_little_o}, will also be used to treat the case of sets.

\begin{theorem} [classical] \label{thm:borel_maps}
	Suppose $X$ and $Y$ are complete, separable metric spaces, and
	$f$ is a function mapping a subset of $X$ into $Y$.
	
	Then the following two statements are equivalent.
	\begin{enumerate}
		\item \label{item:borel_maps:borel_map} The domain of $f$ is a
		Borel subset of $X$ and $f$ is a Borel function.
		\item \label{item:borel_maps:borel_graph} The set $f$ is a
		Borel subset of $X \times Y$.
	\end{enumerate}
\end{theorem}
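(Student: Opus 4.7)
The plan is to establish the two implications separately, relying on the fact that $X$, $Y$, and hence $X \times Y$ are Polish (complete separable metric) spaces, and on the standard Lusin--Suslin theorem for the harder direction.

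For \eqref{item:borel_maps:borel_map}$\Rightarrow$\eqref{item:borel_maps:borel_graph}, I would exploit second countability of $Y$. Fix a countable base $\{U_n\}$ for the topology of $Y$. A pair $(x,y) \in \dmn f \times Y$ satisfies $y \neq f(x)$ if and only if there exists~$n$ with $y \in U_n$ and $f(x) \notin U_n$; equivalently,
\begin{equation*}
	(\dmn f \times Y) \without f = \bigcup_{n=1}^{\infty} f^{-1} \lIm Y \without U_n \rIm \times U_n.
\end{equation*}
By hypothesis $\dmn f$ is Borel in $X$ and each $f^{-1} \lIm Y \without U_n \rIm$ is Borel in $X$, so the right-hand side is a Borel subset of $X \times Y$, and hence so is $f$ itself.

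For \eqref{item:borel_maps:borel_graph}$\Rightarrow$\eqref{item:borel_maps:borel_map}, the key observation is that the projection $\pi : X \times Y \to X$ is continuous and, restricted to $f$, is injective since $f$ is a function. I would then apply the Lusin--Suslin theorem, which asserts that the injective Borel image of a Borel subset of a Polish space is Borel. This immediately yields that $\dmn f = \pi \lIm f \rIm$ is a Borel subset of $X$. For any Borel set $B \subset Y$, the set $f \cap (X \times B)$ is Borel in $X \times Y$ and is mapped injectively by $\pi$ onto $f^{-1} \lIm B \rIm$, so the same theorem shows that $f^{-1} \lIm B \rIm$ is Borel in $X$. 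Hence $f$ is a Borel function defined on a Borel set.

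The forward direction is elementary and uses only second countability of $Y$. The main obstacle is the converse, whose only nontrivial ingredient is the Lusin--Suslin theorem on injective Borel images. Since this is a classical result in descriptive set theory, I would cite a standard reference rather than reproduce its proof.
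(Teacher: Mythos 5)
Your proposal is correct and follows essentially the same route as the paper: the nontrivial implication uses the identity $f^{-1} \lIm B \rIm = \pi \lIm f \cap ( X \times B ) \rIm$ together with the fact that the projection restricted to $f$ is injective, invoking the Lusin--Suslin theorem on injective Borel images (the paper cites Federer's 2.2.10 for exactly this). The forward implication, which the paper dismisses as elementary, you simply write out via a countable base of $Y$, and that argument is correct.
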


\begin{proof}
	Defining $p : X \times Y \to X$ and $q : X \times Y$ by
	$p(x,y)=x$ and $q(x,y)=y$ for $(x,y) \in X \times Y$, one notices that
	$f^{-1} \lIm B \rIm = p \big \lIm f \cap q^{-1} \lIm B \rIm \big \rIm$
	for $B \subset Y$, hence \eqref{item:borel_maps:borel_graph} implies
	\eqref{item:borel_maps:borel_map} by
	\cite[\hyperlink{2_2_10}{2.2.10}]{MR41:1976}
	as $p | f$ is univalent.  The converse is
	elementary.
\end{proof}

\begin{theorem} \label{thm:isakov}
	Suppose $k$ is a nonnegative integer, $\vdim$ and $\adim$ are positive
	integers, $\vdim < \adim$, $0 \leq \alpha \leq 1$, $\gamma = k$ if
	$\alpha = 0$ and $\gamma = (k,\alpha)$ if $\alpha > 0$, $D$ is
	$\mathscr{L}^\vdim$ measurable, $f_i : D \to \bigodot^i ( \rel^\vdim,
	\rel^\codim )$ are $\mathscr{L}^\vdim \restrict D$ measurable for $i =
	0, \ldots, k$, and $\mathscr{L}^\vdim$~almost all $a \in D$ satisfy
	\begin{gather*}
		\lim_{r \to 0+} r^{-k} \sup | f-P_a | \lIm D \cap \cball ar
		\rIm = 0 \quad \text{if $\alpha = 0$}, \\
		\limsup_{r \to 0+} r^{-k-\alpha} \sup | f-P_a | \lIm D \cap
		\cball ar \rIm < \infty \quad \text{if $\alpha > 0$},
	\end{gather*}
	where $P_a : \rel^\vdim \to \rel^\codim$ and $P_a (x) = \sum_{i=0}^k
	\langle (x-a)^i/i!, f_i(a) \rangle$ for $x \in \rel^\vdim$.

	Then there exists a sequence of functions $g_j : \rel^\vdim \to
	\rel^\codim$ of class~$\gamma$ such that $\mathscr{L}^\vdim \big ( D
	\without \bigcup_{j=1}^\infty \{ x \with f_0 (x) = g_j(x) \} \big ) =
	0$.
\end{theorem}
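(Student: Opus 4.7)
The plan is to reduce to a countable collection of subsets on which the Taylor estimate is uniform, and then invoke Whitney's extension theorem on each piece. I would carry out the argument in four steps.

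\emph{Step 1 (measure-theoretic reduction).} By a standard Egorov/Lusin type decomposition of~$D$, I would produce a countable family of $\mathscr{L}^\vdim$ measurable sets $D_j \subset D$ with $\mathscr{L}^\vdim \big ( D \without \bigcup_j D_j \big ) = 0$, together with numbers $0 < r_j < \infty$ and $0 \leq C_j < \infty$, such that each $f_i | D_j$ is bounded and continuous, and such that for every $a \in D_j$ and every $x \in D_j \cap \cball a{r_j}$ one has $| f_0 (x) - P_a(x) | \leq \varepsilon_j(|x-a|) |x-a|^k$ if $\alpha = 0$, and $| f_0(x) - P_a(x) | \leq C_j |x-a|^{k+\alpha}$ if $\alpha > 0$, with $\varepsilon_j(t) \to 0$ as $t \to 0+$. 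Additionally, by the Lebesgue density theorem, I may further restrict so that every $a \in D_j$ is a density point of $D_j$; this is the condition that will make Lemma~\ref{lemma:poly_control} applicable.

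\emph{Step 2 (Whitney compatibility on each piece).} For $a,b \in D_j$ sufficiently close, the difference $P_a - P_b : \rel^\vdim \to \rel^\codim$ is a polynomial of degree at most $k$. Evaluating at any $x \in D_j$ close to both $a$ and $b$ gives
\begin{equation*}
	| P_a(x) - P_b(x) | \leq | f_0(x) - P_a(x) | + | f_0(x) - P_b(x) | \leq C_j \big ( |x-a|^{k+\alpha} + |x-b|^{k+\alpha} \big )
\end{equation*}
(with the obvious modification if $\alpha = 0$). Since every point of $D_j$ is a density point, in particular $D_j \cap \cball br$ satisfies the hypothesis on $X$ in Lemma~\ref{lemma:poly_control} for all sufficiently small $r$. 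Applying that lemma to $P_a - P_b$ on $\cball b{\lambda |b-a|}$ for a fixed large $\lambda$ yields
\begin{equation*}
	\| \Der^i ( P_a - P_b ) (b) \| \leq \Gamma' |b-a|^{k+\alpha-i} \quad \text{for $i = 0, \ldots, k$},
\end{equation*}
and taking $i = 0, \ldots, k$ together with $\Der^i P_a(b) = \sum_{j \geq i} \langle (b-a)^{j-i}/(j-i)!, f_j(a) \rangle$ produces exactly the Whitney compatibility estimates of class~$\gamma$ for the jet $(f_0, \ldots, f_k) | D_j$.

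\emph{Step 3 (Whitney extension).} Once the Whitney conditions of class~$\gamma$ are established on each $D_j$ (or its closure, using continuity of the $f_i | D_j$), the classical Whitney extension theorem, in the form for class~$k$ if $\alpha = 0$ and for class~$(k,\alpha)$ if $\alpha > 0$, yields a function $g_j : \rel^\vdim \to \rel^\codim$ of class~$\gamma$ with $g_j(x) = f_0(x)$ for every $x \in D_j$ (and, incidentally, $\Der^i g_j(x) = f_i(x)$). Since $\mathscr{L}^\vdim \big ( D \without \bigcup_j D_j \big ) = 0$, the resulting sequence $(g_j)$ satisfies the required cover property.

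The principal obstacle is Step~2: the hypothesis only constrains $f_0$, yet Whitney's theorem requires compatibility conditions on all the higher~$f_i$. Lemma~\ref{lemma:poly_control} together with the density-point reduction is precisely the mechanism that extracts these higher order estimates for free; once this is in hand, the rest is a routine application of the Whitney extension theorem and countable additivity.
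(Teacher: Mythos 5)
Your argument is correct in outline, but it takes a genuinely different route from the paper. The paper does not reprove this statement: after reducing to the case that $D$ is a Borel set of finite measure and invoking \cite[2.10.19\,(4)]{MR41:1976}, it quotes Isakov's Theorems~1 and~2 from \cite{MR897693-english} (together with \ref{miniremark:class_ka} to translate the H\"older case into class~$(k,\alpha)$). What you propose is essentially a self-contained proof of the implication of Isakov's theorem that is actually used, along the classical Calder\'on--Zygmund/Whitney lines of \cite{MR0136849}: a Lusin--Egorov decomposition into pieces $D_j$ with uniform estimates, extraction of Whitney compatibility for the \emph{whole} jet from control of $f_0$ alone by applying \ref{lemma:poly_control} to $P_a-P_b$ near density points, and then the Whitney extension theorem (for class~$k$ via \cite[3.1.14]{MR41:1976} if $\alpha=0$, and in its H\"older form, e.g.\ \cite[\printRoman 6.2.2.2]{MR0290095}, if $\alpha>0$). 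Your route buys independence from the cited Russian-literature result at the cost of essentially redoing its proof; the paper's route is shorter but externalises exactly the mechanism you describe.

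One detail you should not gloss over: applying \ref{lemma:poly_control} at scale $r$ comparable to $|a-b|$ requires $D_j \cap \cball b{r}$ to be $\Gamma^{-1}r$-dense in $\cball br$, and the Lebesgue density theorem yields this only for $r$ below a threshold depending on the point $b$, whereas Whitney's theorem needs the compatibility estimates uniformly on (compact parts of) $\Clos D_j$. So you must uniformise this threshold, and in the case $\alpha=0$ also the modulus $\varepsilon_j$, by a further countable decomposition; this is routine (for fixed $r$ the function sending $b$ to $\sup \dist ( \cdot, D_j ) \lIm \cball br \rIm$ is continuous, so the relevant sublevel sets are measurable), and pairs $a,b \in D_j$ with $|a-b|$ above the threshold are then handled on bounded pieces using the boundedness of the $f_i | D_j$ arranged in your Step~1. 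With these refinements, and passing from $D_j$ to $\Clos D_j$ by continuity of the jet, your Steps~2 and~3 go through and give the asserted sequence $g_j$.
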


\begin{proof}
	Assuming $D$ to be a Borel subset of $\rel^\vdim$ with
	$\mathscr{L}^\vdim ( D ) < \infty $ and noting
	\cite[\hyperlink{2_10_19}{2.10.19}\,(4)]{MR41:1976}, this follows from
	Isakov \cite[Theorem~1]{MR897693-english} if $\alpha = 0$ and Isakov
	\cite[Theorem~2]{MR897693-english}%
	\begin{footnote}
		{As the proof of \cite[Theorem~2]{MR897693-english} is omitted
		in that reference as ``completely analogous'' to
		\cite[Theorem~1]{MR897693-english}, the reader may find it
		helpful to notice that the presently needed case of
		\cite[Theorem~2]{MR897693-english} is in fact simpler than
		\cite[Theorem~1]{MR897693-english} provided one refers to
		\cite[\printRoman 6.2.2.2, \printRoman 6.2.3.1--3]{MR0290095}
		instead of \cite[\hyperlink{3_1_14}{3.1.14}]{MR41:1976} for
		the Whitney type extension theorem.}
	\end{footnote}
	and \ref{miniremark:class_ka} if $\alpha > 0$.
\end{proof}

\begin{remark}
	Isakov in fact provides a characterisation of the property in the
	conclusion (called ``$\mathscr{C}^{k+\alpha}$-rectifiability'' by
	Anzellotti and Serapioni in \cite[2.4]{MR1285779}) in terms of
	``approximate differentiability of the jet $f_i$, $i = 0, \ldots,
	k$'', see \cite{MR897693-english}.  A further characterisation using
	approximate partial derivatives of higher order is given by Lin and
	Liu \cite[1.5]{MR3164847}.
\end{remark}

\begin{theorem} \label{thm:big_O_little_o}
	Suppose $g : \rel^\vdim \to \{ t \with 0 \leq t \leq \infty \}$ and $0
	< l < \infty$.

	Then $\mathscr{L}^\vdim$ almost all $a$ satisfy that
	\begin{equation*}
		\limsup_{r \to 0+} r^{-l} \sup g \lIm \oball ar \rIm \quad
		\text{equals either $0$ or $\infty$}.
	\end{equation*}
\end{theorem}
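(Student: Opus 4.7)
The plan is to show that the exceptional set
\begin{equation*}
	B = \big \{ a \in \rel^\vdim \with 0 < \limsup_{r \to 0+} r^{-l} \sup g \lIm \oball ar \rIm < \infty \big \}
\end{equation*}
has $\mathscr{L}^\vdim$ outer measure zero. For each pair of positive integers $j$ and $m$, define
\begin{equation*}
	E_{j,m} = \big \{ a \in \rel^\vdim \with r^{-l} \sup g \lIm \oball ar \rIm \leq m \text{ for every } 0 < r \leq 2^{-j} \big \}.
\end{equation*}
Since $\{ a \with \limsup_{r \to 0+} r^{-l} \sup g \lIm \oball ar \rIm < \infty \} = \bigcup_{j,m} E_{j,m}$, it is enough to prove that, for each $j$ and $m$, $\mathscr{L}^\vdim$ almost all $a \in E_{j,m}$ satisfy $\limsup_{r \to 0+} r^{-l} \sup g \lIm \oball ar \rIm = 0$.

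The key estimate is a local H\"older bound: if $a \in E_{j,m}$ and $|y-a| < 2^{-j}$, choosing $r$ slightly above $|y-a|$ with $r \leq 2^{-j}$ yields $g(y) \leq \sup g \lIm \oball ar \rIm \leq m r^l$, and letting $r \searrow |y-a|$ gives $g(y) \leq m |y-a|^l$, whence
\begin{equation*}
	g(y) \leq m ( \dist ( y, E_{j,m} ) )^l \quad \text{whenever } \dist ( y, E_{j,m} ) < 2^{-j}.
\end{equation*}
Suppose, for contradiction, that some $a \in E_{j,m}$ has positive upper limit. Then one selects $c > 0$, radii $r_n \searrow 0$ with $r_n < 2^{-j}$, and points $y_n \in \oball a{r_n}$ satisfying $g(y_n) \geq c r_n^l$. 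Since $a \in E_{j,m}$ gives $\dist(y_n, E_{j,m}) \leq r_n < 2^{-j}$, the estimate above forces $\dist(y_n, E_{j,m}) \geq \rho_n := (c/m)^{1/l} r_n$, so $\oball{y_n}{\rho_n}$ is contained in $\oball a{r_n + \rho_n}$ and disjoint from $E_{j,m}$. Comparing volumes yields
\begin{equation*}
	\frac{\mathscr{L}^\vdim ( \oball a{r_n + \rho_n} \without E_{j,m} )}{\mathscr{L}^\vdim ( \oball a{r_n + \rho_n} )} \geq \bigg ( \frac{(c/m)^{1/l}}{1 + (c/m)^{1/l}} \bigg )^\vdim > 0
\end{equation*}
for every sufficiently large $n$, so $a$ is not a Lebesgue density point of $E_{j,m}$.

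The argument concludes by invoking the Lebesgue density theorem (see \cite[\hyperlink{2_9_12}{2.9.12}, \hyperlink{2_10_19}{2.10.19}\,(4)]{MR41:1976}) and taking the union over $j$ and $m$. The main technical point I anticipate is that, since $g$ is an arbitrary extended real valued function, the sets $E_{j,m}$ need not be $\mathscr{L}^\vdim$ measurable; this will be handled by replacing each $E_{j,m}$ by a Borel hull whose outer upper densities at every point coincide with those of $E_{j,m}$, and applying the density theorem to the hull in place of $E_{j,m}$ itself.
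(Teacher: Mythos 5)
Your proposal is correct and follows essentially the same route as the paper: both introduce the sets of points $a$ where $\sup g \lIm \oball ar \rIm \leq m r^l$ for all small $r$, derive the local bound $g(y) \leq m \, \dist (y, E)^l$, and conclude at $\mathscr{L}^\vdim$ almost every point of $E$ by a Lebesgue-density type argument (the paper phrases this as order-one pointwise differentiability of its sets $E_i$, which rests on the same density theorem you invoke). The only real difference is that your measurable-hull workaround is unnecessary: the sets $E_{j,m}$ are closed -- if $a_n \to a$ with $a_n \in E_{j,m}$ and $x \in \oball ar$ with $r \leq 2^{-j}$, apply the defining inequality for $a_n$ with a radius strictly between $|x-a|$ and $r$ -- which is exactly how the paper disposes of measurability.
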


\begin{proof}
	Define closed sets $E_i = \rel^\vdim \cap \{ y \with \text{$\sup g
	\lIm \oball yr \rIm \leq i r^l$ for $0 < r < 1/i$} \}$ for every
	positive integer $i$.  It will be sufficient to show
	\begin{equation*}
		\lim_{r \to 0+} r^{-l} \sup g \lIm \oball ar \rIm = 0 \quad
		\text{for $\mathscr{L}^\vdim$ almost all $a \in E_i$}.
	\end{equation*}

	Suppose $a \in E_i$ and $0 < r < 1/i$.

	Whenever $x \in \oball ar$ one chooses $y \in E_i$ with $|x-y| = \dist
	(x,E_i)$ and infers that $|x-y| \leq |x-a| < 1/i$ and $g(x) \leq i
	|y-x|^l$.  This implies
	\begin{equation*}
		r^{-l} \sup g \lIm \oball ar \rIm \leq i \big ( r^{-1} \sup
		\dist ( \cdot, E_i ) \lIm \oball ar \rIm \big )^l.
	\end{equation*}
	As $r$ approaches $0$ the right hand side of the preceding inequality
	approaches~$0$ provided $E_i$ is pointwise differentiable of order~$1$
	at~$a$ and $\Tan (E_i,a) = \rel^\vdim$ which is true at
	$\mathscr{L}^\vdim$ almost all $a \in E_i$ by
	\ref{example:subset_of_plane}.
\end{proof}

\begin{remark}
	Taking $\adim = \vdim$, $U = \rel^\vdim$, $V$ the varifold associated
	to $\mathscr{L}^\vdim$, and $q = l$, this is a special case of
	Kolasiński and the author \cite[4.4]{MR3625810}.  The proof
	is included for the convenience of the reader and is modelled on
	\cite[\hyperlink{2_9_17}{2.9.17}]{MR41:1976}.  A similar result with
	certain Lebesgue seminorms of $g$ replacing ``$\sup g \lIm \oball ar
	\rIm$'' was obtained by Calder{\'o}n and Zygmund in
	\cite[Theorem~10\,(ii)]{MR0136849}.  Further variants and comments on
	the history and the sharpness of general results of this type are
	contained in \cite[3.1--3.4]{snulmenn.isoperimetric} and
	\cite[4.1--4.5]{MR3625810}.
\end{remark}

\begin{theorem} \label{thm:pt_diff_functions}
	Suppose $k$ is a nonnegative integer, $\vdim$ and $\adim$ are positive
	integers, $\vdim < \adim$, $0 \leq \alpha \leq 1$, $\gamma = k$ if
	$\alpha = 0$ and $\gamma = (k,\alpha)$ if $\alpha > 0$, $U$ is an open
	subset of $\rel^\vdim$, $f : U \to \rel^\codim$, and $X$ is the set of
	$x$ at which $f$ is pointwise differentiable of order~$\gamma$.

	Then the following four statements hold.
	\begin{enumerate}
		\item \label{item:pt_diff_functions:borel} The functions $\pt
		\Der^i f$ are Borel functions whose domains are Borel subsets
		of $\rel^\vdim$ for $i = 0, \ldots, k$ and $X$ is a Borel
		subset of $\rel^\vdim$.
		\item \label{item:pt_diff_functions:rect} There exists a
		sequence of functions $g_j : \rel^\vdim \to \rel^\codim$ of
		class~$\gamma$ such that $\mathscr{L}^\vdim \big ( X \without
		\bigcup_{j=1}^\infty \{ x \with f(x) = g_j(x) \} \big ) = 0$.
		\item \label{item:pt_diff_functions:little_o} If $g :
		\rel^\vdim \to \rel^\codim$ is of class~$\gamma$ and $Y = \{ y
		\with f(y)=g(y) \}$, then
		\begin{gather*}
			\pt \Der^i f (a) = \Der^i g(a) \quad \text{for $i = 0,
			\ldots, k$}, \\
			\lim_{x \to a} |f(x)-g(x)|/|x-a|^{k+\alpha} = 0
		\end{gather*}
		at $\mathscr{L}^\vdim$ almost all $a \in X \cap Y$.
		\item \label{item:pt_diff_functions:rademacher} If $\alpha =
		1$, then $f$ is pointwise differentiable of order~$k+1$ at
		$\mathscr{L}^\vdim$ almost all $a \in X$.
	\end{enumerate}
\end{theorem}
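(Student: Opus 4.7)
For~\eqref{item:pt_diff_functions:borel}, follow the pattern of \cite[\hyperlink{3_1_1}{3.1.1}]{MR41:1976}: consider the complete separable metric space $W = \rel^\vdim \times \prod_{i=0}^k \bigodot^i(\rel^\vdim,\rel^\codim)$ and the subset $E \subset W$ of tuples $(a,\phi_0,\ldots,\phi_k)$ such that $a \in U$, $\phi_0 = f(a)$, and the associated polynomial $P(x) = \tsum{i=0}k \langle (x-a)^i/i!, \phi_i \rangle$ satisfies the sup-limit (for $\alpha = 0$) or sup-limsup (for $\alpha > 0$) condition of~\ref{miniremark:k_jet_for_class_gamma}. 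Writing this condition as a countable intersection over rational $\varepsilon$ and $r$ expresses $E$ as a Borel subset of $W$, once one observes that $(a,\phi,r) \mapsto \sup |f - P| \lIm \cball ar \cap U \rIm$ is Borel (for which one passes to a Borel representative of $f$; this is harmless because $f$ is continuous at every point of $X$ once $k \geq 1$). The uniqueness of the Peano $k$-jet in~\ref{miniremark:k_jet_for_class_gamma} makes the projection $E \to U$ onto $X$ injective, so \ref{thm:borel_maps} yields that $X$ is Borel and each $\pt\Der^i f$ is a Borel function with Borel domain.

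Statement~\eqref{item:pt_diff_functions:rect} is a direct application of~\ref{thm:isakov} with $D = X$ and $f_i = \pt\Der^i f$: measurability is furnished by~\eqref{item:pt_diff_functions:borel}, and the required local estimate at every $a \in D$ is precisely the definition of pointwise differentiability of order~$\gamma$.

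For~\eqref{item:pt_diff_functions:little_o}, at $\mathscr{L}^\vdim$ almost every $a \in Y$ the set $Y$ has density $1$ at $a$ by \cite[\hyperlink{2_10_19}{2.10.19}\,(4)]{MR41:1976}, which implies $\sup \dist(\cdot, Y) \lIm \cball ar \rIm = o(r)$. At such $a \in X \cap Y$ denote by $P_a$ the Peano $k$-jet of $f$ at $a$ and by $Q_a$ the Taylor polynomial of $g$ at $a$; the identity $f = g$ on $Y$ combined with the pointwise differentiability of $f$ and the Taylor bound for $g$ gives $\sup |P_a - Q_a| \lIm Y \cap \cball ar \rIm = o(r^k)$, so~\ref{thm:poly_uniqueness} forces $P_a = Q_a$, i.e.\ $\pt\Der^i f(a) = \Der^i g(a)$. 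The bound $|f(x) - g(x)| = o(|x-a|^{k+\alpha})$ is then immediate when $\alpha = 0$; when $\alpha > 0$, \ref{thm:big_O_little_o} applied with $l = k + \alpha$ to $|f - g|$ (extended arbitrarily off $U$) forces the relevant limsup to vanish almost everywhere, since at these $a$ the limsup is finite because both $|f - P_a|$ and $|g - Q_a|$ are $O(|x-a|^{k+\alpha})$ near~$a$.

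For~\eqref{item:pt_diff_functions:rademacher}, assume $\alpha = 1$. By~\eqref{item:pt_diff_functions:rect} cover $\mathscr{L}^\vdim$ almost all of $X$ by sets $\{f = g_j\}$ with $g_j$ of class~$(k,1)$; by \cite[\hyperlink{3_1_15}{3.1.15}]{MR41:1976}, up to a null set $\{f = g_j\}$ is the countable union of sets $\{f = g_j\} \cap \{g_j = h_{j,l}\} \subset \{f = h_{j,l}\}$ with $h_{j,l}$ of class~$k+1$. Applying~\eqref{item:pt_diff_functions:little_o} with $g = h_{j,l}$ (which is of class~$(k,1)$) shows that $\mathscr{L}^\vdim$ almost every such $a$ satisfies $|f(x) - h_{j,l}(x)| = o(|x-a|^{k+1})$, realising pointwise differentiability of order~$k+1$ at $a$ with witness $h_{j,l}$. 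The main obstacle will be the Borel measurability step in~\eqref{item:pt_diff_functions:borel}, in particular the careful reduction of the sup-functional to a Borel predicate in $(a,\phi,r)$ without a~priori regularity of $f$ off $X$; the remaining three parts then unfold linearly from the preceding auxiliary theorems \ref{thm:isakov}, \ref{thm:poly_uniqueness}, \ref{thm:big_O_little_o}, and the Lusin-type approximation \cite[\hyperlink{3_1_15}{3.1.15}]{MR41:1976}.
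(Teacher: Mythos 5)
Your treatment of \eqref{item:pt_diff_functions:borel} has a genuine gap, and it sits exactly at the point you yourself flag as the main obstacle. The theorem assumes nothing about $f$ beyond its being a function on $U$, so there is no ``Borel representative'' of $f$ to pass to; and even if $f$ were Lebesgue measurable, the reduction would be illegitimate, because pointwise differentiability and the differentials $\pt \Der^i f$ are genuinely pointwise notions: altering $f$ on a null set changes $X$ and $\dmn \pt \Der^i f$, so a Borel statement proved for the modified function does not transfer back to $f$. Continuity of $f$ at the points of $X$ does not rescue the argument either: the functional $(a,\phi,r) \mapsto \sup | f - P | \lIm \cball ar \rIm$ involves values of $f$ at points far from $X$, where $f$ is arbitrary, so it need not be Borel, and you cannot invoke regularity on $X$ while $X$ is the very set whose Borel character is to be proved (also, the case $k=0$ permitted by the hypotheses is left out by your ``once $k \geq 1$'' caveat).

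The repair -- and this is what the paper does -- is to avoid any measurability claim for the sup-functional altogether. For positive integers $i,j$ let $D_{i,j}$ consist of those $(a,\phi_0,\ldots,\phi_k)$ with $\big | f(x) - \tsum{l=0}k \langle (x-a)^l/l!, \phi_l \rangle \big | \leq |x-a|^k/i$ for every $x \in \oball a{1/j}$, and let $E_i$ be defined analogously with bound $i|x-a|^{k+\alpha}$ on $\oball a{1/i}$. For each fixed $x$ the defining condition cuts out a closed set of $(a,\phi)$: $f(x)$ is merely a constant, the polynomial evaluation is continuous in $(a,\phi)$, and the exceptional region where $|x-a| < 1/j$ and the strict reverse inequality holds is open; hence $D_{i,j}$ and $E_i$, being intersections over all $x$ of closed sets, are closed for a completely arbitrary $f$. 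The graph $F$ of the jet map is then $\bigcap_{i=1}^\infty \bigcup_{j=1}^\infty D_{i,j}$ if $\alpha = 0$ and $\bigcup_{i=1}^\infty E_i$ if $\alpha > 0$, hence Borel, and \ref{thm:borel_maps} together with your (correct) injectivity observation yields \eqref{item:pt_diff_functions:borel}. With that step repaired, your parts \eqref{item:pt_diff_functions:rect}--\eqref{item:pt_diff_functions:rademacher} coincide in substance with the paper's proof: \ref{thm:isakov} applied to $D = X$ and $f_i = \pt \Der^i f$; the uniqueness theorem \ref{thm:poly_uniqueness} at density points of $Y$ (via \ref{example:subset_of_plane}) plus \ref{thm:big_O_little_o} with $l = k+\alpha$; and the Lusin approximation \cite[3.1.15]{MR41:1976} for $\alpha = 1$ -- the only cosmetic difference being that the paper requires the functions in \eqref{item:pt_diff_functions:rect} to be of class $k+1$ directly instead of re-approximating the $g_j$ afterwards.
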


\begin{proof}
	Assume $U = \rel^\vdim$.

	Abbreviate $C = \rel^\vdim \times \prod_{l=0}^k \bigodot^l (
	\rel^\vdim, \rel^\codim )$.  Whenever $i$ and $j$ are positive
	integers define $D_{i,j}$ to consist of those
	$(a,\phi_0,\ldots,\phi_k) \in C$ satisfying
	\begin{equation*}
		\big | f(x)- \tsum{l=0}k \langle (x-a)^l/l!, \phi_l \rangle
		\big | \leq |x-a|^k/i \quad \text{for $x \in \oball a{1/j}$}
	\end{equation*}
	and $E_i$ to consist of those $(a,\phi_0,\ldots,\phi_k) \in C$
	satisfying
	\begin{equation*}
		\big | f(x)- \tsum{l=0}k \langle (x-a)^l/l!, \phi_l \rangle
		\big | \leq i |x-a|^{k+\alpha} \quad \text{for $x \in \oball
		a{1/i}$}.
	\end{equation*}
	Furthermore, define
	\begin{equation*}
		F = \big \{ (a,\pt \Der^0 f (a), \ldots, \pt \Der^k f(a))
		\with \text{$a \in \dmn \pt \Der^kf$ and $a \in X$} \big \}.
	\end{equation*}
	Since the sets $D_{i,j}$ and $E_i$ are closed and
	\begin{equation*}
		F = \bigcap_{i=1}^\infty \bigcup_{j=1}^\infty C_{i,j} \quad
		\text{if $\alpha = 0$}, \qquad F = \bigcup_{i=1}^\infty E_i
		\quad \text{if $\alpha > 0$},
	\end{equation*}
	$F$ is a Borel set.  Noting that the condition $a \in X$ in the
	definition of $F$ is redundant if $\alpha =0$, one infers that $\pt
	\Der^k f$ is a Borel function whose domain is a Borel set from
	\ref{thm:borel_maps}, hence the same holds for $\pt \Der^i f$ for $i =
	0, \ldots, k-1$.  As
	\begin{equation*}
		X = \big \{ a \with ( a , \pt \Der^0 f(a), \ldots, \pt \Der^k
		f (a)) \in F \big \},
	\end{equation*}
	$X$ is a Borel set and \eqref{item:pt_diff_functions:borel} follows.

	\eqref{item:pt_diff_functions:borel} and \ref{thm:isakov} yield
	\eqref{item:pt_diff_functions:rect}.  To prove the first half of
	\eqref{item:pt_diff_functions:little_o}, it is sufficient to apply
	\ref{thm:poly_uniqueness} with $S$, $Y$, $\Der^i P(a)$, and $X$
	replaced by $\rel^\vdim$, $\rel^\codim$, $\pt \Der (f-g)(a)$, and $Y$
	for $i = 0, \ldots, k$ whenever $a \in X$, the set $Y$ is pointwise
	differentiable of order~$1$ at~$a$, and $\Tan (Y,a) = \rel^\vdim$ by
	\ref{example:subset_of_plane}.  To deduce the second half of
	\eqref{item:pt_diff_functions:little_o} from its first half, one may
	assume $\alpha > 0$ and hence apply \ref{thm:big_O_little_o} with $g$
	and $l$ replaced by $|f-g|$ and $k+\alpha$.  To prove
	\eqref{item:pt_diff_functions:rademacher}, notice that in this case
	the functions~$g_j$ in \eqref{item:pt_diff_functions:little_o} may be
	required to be of class~$k+1$ by
	\cite[\hyperlink{3_1_15}{3.1.15}]{MR41:1976}, hence
	\eqref{item:pt_diff_functions:little_o} implies
	\eqref{item:pt_diff_functions:rademacher}.
\end{proof}

\begin{remark} \label{remark:set_of_continuity_points}
	In view of \ref{remark:pt_diff_fct_low_order}, the case $k = 0$ of
	\eqref{item:pt_diff_functions:borel} merely restates that the set of
	continuity points of $f$ is a Borel set, see for instance
	\cite[(6.90)\,(a)--(c)]{MR0367121}.
\end{remark}

\begin{remark} \label{remark:pt_diff_functions}
	The proof of \eqref{item:pt_diff_functions:borel} is a variation of
	the argument of \cite[\hyperlink{3_1_1}{3.1.1}]{MR41:1976} where the
	case $k=1$ is treated for continuous $f$.  The case $k=1$ for
	arbitrary $f$ occurs in Járai \cite{MR812302}.  In fact, it is
	shown there that $(\Der f)^{-1} \lIm C \rIm$ belongs to the class
	``$\mathscr{F}_{\sigma\delta}$'' whenever $C$ is a closed subset of
	$\Hom ( \rel^\vdim, \rel^\codim )$.  That class is also denoted
	``$\boldsymbol{\Pi}_3^0 ( \rel^\vdim )$'' within the Borel hierarchy,
	see \cite{MR1321597}.
\end{remark}

\begin{remark}
	The pattern proof of
	\eqref{item:pt_diff_functions:rect}--\eqref{item:pt_diff_functions:rademacher}
	is that of Liu \cite[1.6]{MR2414427}.
\end{remark}

\begin{remark}
	A related study of pointwise differentials of order $k$ in terms of
	general difference quotients is carried out by Zibman, see
	\cite{MR508884}.
\end{remark}

\section{Higher order differentiability theory for sets}
\label{sect:diff_sets}

In this section Theorem~\hyperlink BB is proven, see \ref{thm:diff_order_one}
and \ref{thm:derivative_Borel} for the Borel properties asserted in~\hyperlink
BB\,\eqref{intro:rademacher_sets:borel} and \ref{thm:rademacher_sets} for the
differentiability results asserted in~\hyperlink
BB\,\eqref{intro:rademacher_sets:approx}--\eqref{intro:rademacher_sets:points}.

\begin{miniremark} \label{miniremark:planes_over_planes}
	Suppose $\adim$ is a positive integer, $\vdim$ is an integer,
	and $0 \leq \vdim \leq \adim$.  Then the set
	\begin{equation*}
		( \grass \adim \vdim  \times \grass \adim \vdim ) \cap \big \{
		(S,T) \with S^\perp \cap T = \{ 0 \} \big \}
	\end{equation*}
	is an open subset of $\grass \adim \vdim \times \grass \adim \vdim$;
	in fact, $S^\perp \cap T = \{ 0 \}$ if and only if $\bigwedge_\vdim (
	\project T \circ \project S ) \neq 0$ for $S, T \in \grass \adim
	\vdim$.
\end{miniremark}

\begin{theorem} \label{thm:diff_order_one}
	Suppose $A \subset \rel^\adim$ and
	\begin{equation*}
		\tau =  \{ (a,S) \with S = \Tan (A,a) \} \cap \dmn \pt \Der A
		\subset \rel^\adim \times {\textstyle \bigcup_{\vdim=0}^\adim
		\grass \adim \vdim }.
	\end{equation*}

	Then $\tau$ is a Borel function whose domain is a Borel set and
	$\tau^{-1} \lIm \grass \adim \vdim \rIm$ is countably $\vdim$
	rectifiable whenever $\vdim = 0, \ldots, \adim$.
\end{theorem}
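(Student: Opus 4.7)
The plan is to mimic the argument for \ref{thm:pt_diff_functions}\,\eqref{item:pt_diff_functions:borel} by first exhibiting $\tau$ as a Borel subset of the complete separable metric space $\rel^\adim \times \bigcup_{\vdim=0}^\adim \grass\adim\vdim$ and then invoking \ref{thm:borel_maps}; the countable $\vdim$ rectifiability assertion will be handled separately by a local cone argument based on \ref{miniremark:cone}.

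For the Borel structure I would translate pointwise differentiability of order~$1$ into a single uniformly controllable distance inequality. By \ref{def:diff_sets} with $k=1$, $\alpha=0$, combined with \ref{thm:eq_diff_sets}\,\eqref{item:eq_diff_sets:add_manifold} (so the class~$1$ submanifold $B$ may be replaced by its tangent affine plane $a + S$ modulo an $o(r)$ error in distances), the pair $(a,S)$ belongs to $\tau$ if and only if $a \in \Clos A$, $S \in \bigcup_\vdim \grass\adim\vdim$, and
\begin{equation*}
    \lim_{r \to 0+} r^{-1} \sup \big\{ | \dist (x, A) - \dist(x, a+S) | \with x \in \cball ar \big\} = 0;
\end{equation*}
this apparently weaker condition on $S$ in fact forces $S = \Tan(A, a)$, since evaluating the limit along tangential and normal directions to $S$ identifies the tangent cone with $S$. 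Because $\dist(x, a+S) = | \perpproject S (x-a) |$ is jointly continuous in $(x, a, S)$ and $\dist(\cdot, A)$ is $1$-Lipschitz, the displayed sup is continuous in $(a, S, r)$ for $r > 0$, so restricting $r$ to positive rationals expresses $\tau$ as the intersection of $\Clos A \times \bigcup_\vdim \grass\adim\vdim$ with a countable combination of closed sets of the form $\bigcap_i \bigcup_j \bigcap_q$. Thus $\tau$ is Borel, and as it is a function by uniqueness of $\Tan(A, a)$ (see~\ref{remark:same_tangent_cone}), \ref{thm:borel_maps} yields the claimed Borel function/Borel domain property.

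For countable $\vdim$ rectifiability of $X = \tau^{-1} \lIm \grass\adim\vdim \rIm$ I would cover $X$ by countably many Lipschitz graphs using a countable dense collection $\{S_l\}$ in $\grass\adim\vdim$. By \ref{miniremark:cone} applied at each $a \in X$ and a routine perturbation estimate on the Grassmannian that transfers a cone condition for $\Tan(A, a)$ to a cone condition for a nearby $S_l$ with slightly larger constant, there exist positive integers $l, j, m$ with
\begin{equation*}
    A \cap \cball a{1/j} \subset \big\{ \chi \with | \perpproject{S_l} (\chi - a) | \leq m | \project{S_l} (\chi - a) | \big\};
\end{equation*}
call the set of such $a$'s $Y_{l, j, m}$. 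Subdividing each $Y_{l, j, m}$ into countably many subsets of diameter less than $1/j$ and approximating any $a' \in Y_{l, j, m}$ close to $a$ by a sequence in $A \cap \cball a{1/j}$, closedness of the cone yields $a' - a$ in the $m$-cone over $S_l$; hence $\project{S_l}$ is bi-Lipschitz on each such subset, presenting it as a Lipschitz image of a subset of $\rel^\vdim$.

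The main obstacle, compared to the function case of \ref{thm:pt_diff_functions}, is that the "tangent-space parameter" $S$ varies over the Grassmannian rather than over a fixed vector space, so both the Borel computation and the rectifiability argument must accommodate this varying parameter. The Borel step is handled by the joint continuity of $(x, a, S) \mapsto | \perpproject S (x - a) |$ and by reducing $r$ to rationals via continuity of the sup in $r$; the rectifiability step is handled by approximating $\Tan(A, a)$ by members of the fixed countable dense family $\{S_l\}$ and exploiting that the cone condition is closed under small perturbations of the reference plane.
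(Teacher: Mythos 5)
Your proposal is correct and follows essentially the same route as the paper: the Borel part is obtained exactly as in the paper by testing the order-one closeness condition against the affine planes $a+S$ (justified via \ref{remark:same_tangent_cone}, \ref{miniremark:cone}, and \ref{thm:eq_diff_sets}\,\eqref{item:eq_diff_sets:add_manifold}), writing $\tau$ as a countable intersection of unions of closed sets, and applying \ref{thm:borel_maps}, while the rectifiability part uses the same covering of $\tau^{-1} \lIm \grass \adim \vdim \rIm$ by cone-condition sets over a countable dense family of planes. The only difference is cosmetic: where the paper concludes by citing the rectifiability lemma \cite[3.3.5]{MR41:1976}, you re-derive the Lipschitz-graph decomposition by hand, which is fine.
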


\begin{proof}
	Let $V = \rel^\adim$ and $G = \bigcup_{\vdim = 0}^\adim \grass \adim
	\vdim$.  Endowing $\rel^V$ with the Cartesian product topology,%
	\begin{footnote}
		{Whenever $X$ and $Y$ are sets $Y^X$ denotes set of maps from
		$X$ into $Y$, see \cite[p.~669]{MR41:1976}.}
	\end{footnote}
	one notices that $\delta : V \times G \to \rel^V$ defined by
	\begin{equation*}
		\delta ( a,S ) (x) = \dist (x,\{a+\chi \with \chi \in S \} )
		\quad \text{for $(a,S) \in V \times G$ and $x \in V$}
	\end{equation*}
	is continuous and defines closed sets $C_{i,j}$ consisting of those
	$(a,S) \in V \times G$ with
	\begin{equation*}
		a \in \Clos A, \qquad \sup | \dist (\cdot,A) - \delta (a,S) |
		\lIm \oball ar \rIm \leq r/i \quad \text{for $0 < r < 1/j$}
	\end{equation*}
	whenever $i$ and $j$ are positive integers.  Observe that
	\begin{equation*}
		\tau = \bigcap_{i=1}^\infty \bigcup_{j=1}^\infty C_{i,j},
	\end{equation*}
	by \ref{remark:same_tangent_cone}, \ref{miniremark:cone}, and
	\ref{thm:eq_diff_sets}\,\eqref{item:eq_diff_sets:add_manifold}, hence
	$\tau$ is a Borel set.  Since $\tau$ is a function, it follows that
	$\tau$ is a Borel function whose domain is a Borel set by
	\ref{thm:borel_maps}.

	Suppose $\vdim$ is an integer and $0 \leq \vdim \leq \adim$.  Choose a
	countable dense subset $D$ of $\grass \adim \vdim$, define $E(i,S)$ to
	consist of $a \in \Clos A$ such that
	\begin{equation*}
		A \cap \oball a{1/i} \subset \big \{ \chi \with |
		\perpproject{S} (\chi-a) \leq i | \project S (\chi-a) | \big
		\}
	\end{equation*}
	for $i = 1, 2, 3, \ldots$ and $S \in D$.  Noting
	\ref{remark:same_tangent_cone} and \ref{miniremark:cone}, one infers
	from \ref{miniremark:planes_over_planes} that
	\begin{equation*}
		{\textstyle \tau^{-1} \lIm \grass \adim \vdim \rIm \subset
		\bigcup \{ E(i,S) \with \text{$i = 1, 2, 3, \ldots$ and $S \in
		D$} \}}.
	\end{equation*}
	Therefore the set $\tau^{-1} \lIm \grass \adim \vdim \rIm$ is
	countably $\vdim$ rectifiable by
	\cite[\hyperlink{3_3_5}{3.3.5}]{MR41:1976}.
\end{proof}

\begin{lemma} \label{lemma:convergence}
	Suppose $B \subset \rel^\adim$ and $h_j$ is a sequence of univalent
	maps of $\rel^\adim$ onto $\rel^\adim$ satisfying $\lim_{j \to \infty}
	h_j(x)=x$ for $x \in \rel^\adim$, and
	\begin{equation*}
		\lim_{j \to \infty} \sup \big \{ | h_j^{-1} (\chi)-\chi |
		\with \chi \in K \big \} = 0 \quad \text{whenever $K$ is a
		compact subset of $\rel^\adim$}.
	\end{equation*}

	Then there holds
	\begin{equation*}
		\lim_{j \to \infty} \dist (x,h_j \lIm B \rIm) = \dist (x, B )
		\quad \text{for $x \in \rel^\adim$}.
	\end{equation*}
\end{lemma}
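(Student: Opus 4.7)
The plan is to prove pointwise convergence by establishing matching $\limsup$ and $\liminf$ bounds, handling the two directions separately. The hypothesis gives us asymmetric information: $h_j \to \id{\rel^\adim}$ pointwise, while $h_j^{-1} \to \id{\rel^\adim}$ uniformly on compact sets. This asymmetry dictates how each direction is proven.

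First I would dispose of the case $B = \varnothing$, in which $h_j \lIm B \rIm = \varnothing$ as well and both sides equal $\infty$. Assume $B \neq \varnothing$, fix $x \in \rel^\adim$, and pick $b_0 \in B$. For the $\limsup$ direction, given $\varepsilon > 0$ choose $b \in B$ with $|x-b| \leq \dist(x,B) + \varepsilon$. Then $h_j(b) \in h_j \lIm B \rIm$, so
\begin{equation*}
    \dist(x, h_j \lIm B \rIm) \leq |x - h_j(b)| \leq |x-b| + |b - h_j(b)|,
\end{equation*}
and the pointwise convergence $h_j(b) \to b$ yields $\limsup_j \dist(x, h_j \lIm B \rIm) \leq \dist(x, B) + \varepsilon$; let $\varepsilon \to 0+$.

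For the $\liminf$ direction, the point is that the uniform-on-compacts hypothesis on $h_j^{-1}$ can only be invoked on a fixed compact set, so I first need to confine the relevant points to one. Using the trial point $h_j(b_0)$ and pointwise convergence, the sequence $\dist(x, h_j \lIm B \rIm)$ is bounded by some $M < \infty$. For each $j$ pick $b_j \in B$ with $|x - h_j(b_j)| \leq \dist(x, h_j \lIm B \rIm) + 1/j$, and set $K = \cball x{M+1}$. Then $h_j(b_j) \in K$ for all sufficiently large $j$, so writing $b_j = h_j^{-1}(h_j(b_j))$ and applying the uniform convergence hypothesis gives
\begin{equation*}
    |b_j - h_j(b_j)| \leq \sup \{ |h_j^{-1}(\chi) - \chi| \with \chi \in K \} \to 0.
\end{equation*}
Therefore $\dist(x,B) \leq |x - b_j| \leq |x - h_j(b_j)| + |b_j - h_j(b_j)| \leq \dist(x, h_j \lIm B \rIm) + 1/j + o(1)$, and taking $\liminf$ yields $\dist(x,B) \leq \liminf_j \dist(x, h_j \lIm B \rIm)$.

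The only delicate point is the step that ensures $h_j(b_j)$ lies in a fixed compact set independently of $j$; once that is arranged, the uniform-on-compacts hypothesis applies directly. Everything else is a routine triangle-inequality manipulation.
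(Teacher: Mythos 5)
Your proof is correct and follows essentially the same route as the paper: the $\limsup$ bound comes from the pointwise convergence $h_j(b)\to b$, and the $\liminf$ bound from the uniform convergence of $h_j^{-1}$ on a fixed closed ball about $x$ (the paper uses $\cball x{\dist(x,B)}$ directly rather than near-minimizers $b_j$ confined to $\cball x{M+1}$, but this is only a cosmetic difference).
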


\begin{proof}
	Suppose $x \in \rel^\adim$.  Clearly, $\limsup_{j \to \infty} \dist
	(x, h_j \lIm B \rIm ) \leq \dist (x, B )$.  Moreover, if $\varepsilon
	> 0$ and $r = \dist (x,B) \geq 0$, $j$ is a positive integer, and
	\begin{equation*}
		|h_j^{-1} ( \chi ) - \chi | \leq \varepsilon \quad \text{for
		$\chi \in \cball xr$},
	\end{equation*}
	then $|h_j(b)-x| \geq r - \varepsilon$ for $b \in B$, as $|h_j(b)-x|
	\leq r$ implies $|b-h_j(b)| \leq \varepsilon$.  It follows that $\dist
	(x,B) \leq \liminf_{j \to \infty} \dist (x,h_j \lIm B \rIm)$.
\end{proof}

\begin{lemma} \label{lemma:varying_poly_fcts}
	Suppose $k$ and $\adim$ are positive integers, $G =
	\bigcup_{\vdim=0}^\adim \grass \adim \vdim$, $V = \rel^\adim$, $C$ is
	the set of
	\begin{equation*}
		{\textstyle (a,S,\phi_0, \ldots, \phi_k) \in V \times G \times
		\prod_{i=0}^k \bigodot^i ( V, V )}
	\end{equation*}
	such that $\phi_0 = \perpproject{S} (a)$ and
	\begin{equation*}
		\phi_i ( v_1, \ldots, v_i ) = \phi_i ( \project{S} (v_1),
		\ldots, \project{S} (v_i) ) \in S^\perp \quad \text{for $v_1,
		\ldots, v_i \in V$, $i = 1, \ldots, k$},
	\end{equation*}
	$\rel^V$ is endowed with the Cartesian product topology,
	and $\delta : C \to \rel^V$ satisfies
	\begin{align*}
		& \delta (a,S,\phi_0, \ldots, \phi_k) (x) = \dist (x, \{
		\chi+P(\chi) \with \chi \in S \} ) \\
		& \qquad \text{where $P (\chi) = \tsum{i=0}k \langle
		(\chi-\project S (a) )^i/i!, \phi_i \rangle$},
	\end{align*}
	whenever $(a,S,\phi_0, \ldots, \phi_k) \in C$ and $x \in V$.

	Then $C$ is a closed subset of $V \times G \times \prod_{i=0}^k
	\bigodot^i ( V, V )$ and $\delta$ is continuous.
\end{lemma}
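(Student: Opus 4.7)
The plan is to verify the two claims in sequence: first that $C$ is closed, by checking that each of its defining conditions cuts out a closed subset of $V \times G \times \prod_{i=0}^k \bigodot^i ( V, V )$; and then that $\delta$ is continuous by establishing both an upper and a lower bound for $\lim_{j \to \infty} \delta ( a_j, S_j, \phi_0^{(j)}, \ldots, \phi_k^{(j)} ) ( x )$ along an arbitrary convergent sequence.  Throughout, I would use that $G = \bigcup_{\vdim=0}^\adim \grass \adim \vdim$ is topologised as a disjoint union, so the integer $\dim S$ is locally constant on $G$, and that on each $\grass \adim \vdim$ the assignment $S \mapsto \project{S}$ is continuous by the very definition of the topology on $\grass \adim \vdim$.

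For closedness, I would fix $\vdim$ and work inside $V \times \grass \adim \vdim \times \prod_{i=0}^k \bigodot^i ( V, V )$.  The equation $\phi_0 = \perpproject{S}(a)$ rewrites as $\phi_0 + \project{S}(a) - a = 0$, which is continuous in $(a, S, \phi_0)$.  For each fixed $(v_1, \ldots, v_i) \in V^i$, the difference $\phi_i ( v_1, \ldots, v_i ) - \phi_i ( \project{S} v_1, \ldots, \project{S} v_i )$ is continuous in $(S, \phi_i)$, so the factorisation condition is an intersection of closed sets and hence closed; likewise the condition $\project S \circ \phi_i = 0$ (the restatement of $\phi_i \lIm V^i \rIm \subset S^\perp$) is closed.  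Intersecting these closed conditions over $i = 0, \ldots, k$ produces $C$.

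For continuity of $\delta$, I would fix $x \in V$ and a sequence $(a_j, S_j, \phi_0^{(j)}, \ldots, \phi_k^{(j)})$ in $C$ converging to $(a, S, \phi_0, \ldots, \phi_k)$, and let $P_j, P$ and $B_j, B$ denote the associated polynomials and graphs.  Since convergence in $G$ forces the sequence eventually to lie in a single $\grass \adim \vdim$, one has $\project{S_j} \to \project S$ in $\Hom ( V, V )$.  For the upper bound $\limsup_j \dist ( x, B_j ) \leq \dist ( x, B )$, given $\chi \in S$ I would set $\chi_j = \project{S_j} ( \chi ) \in S_j$, note $\chi_j \to \chi$ and $\project{S_j}(a_j) \to \project{S}(a)$, and conclude $\chi_j + P_j ( \chi_j ) \to \chi + P ( \chi )$ by continuity of polynomial evaluation in all of its parameters.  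For the lower bound $\liminf_j \dist ( x, B_j ) \geq \dist ( x, B )$, I would choose $y_j \in B_j$ with $| x - y_j | \to \liminf_j \dist ( x, B_j )$, set $\chi_j = \project{S_j} ( y_j ) \in S_j$, use boundedness of $y_j$ (hence of $\chi_j$) to pass to a subsequence with $\chi_j \to \chi_\infty$, and deduce $\chi_\infty \in S$ from $\chi_j = \project{S_j} ( \chi_j ) \to \project{S} ( \chi_\infty )$; then $y_j = \chi_j + P_j ( \chi_j ) \to \chi_\infty + P ( \chi_\infty ) \in B$ yields the bound.

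The main technical point, which is really only a bookkeeping matter, is keeping careful track of the three simultaneous convergences $\chi_j \to \chi_\infty$, $\project{S_j}(a_j) \to \project{S}(a)$, and $\phi_i^{(j)} \to \phi_i$ when passing to the limit in $P_j(\chi_j)$; nothing more delicate is needed because the algebra of polynomial evaluation is jointly continuous on finite-dimensional spaces.  There is no obstacle from different Grassmannian components, since each $\grass \adim \vdim$ is clopen in $G$, so I can reduce both claims to one $\vdim$ at a time.
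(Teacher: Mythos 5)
Your argument is correct, but the continuity part follows a genuinely different route from the paper's. You prove the pointwise convergence $\dist(x,B_j)\to\dist(x,B)$ directly by a two-sided estimate: for the upper bound you transport a point $\chi+P(\chi)$ of $B$ to $\project{S_j}(\chi)+P_j(\project{S_j}(\chi))\in B_j$ and use joint continuity of the evaluation $(\chi,\project S(a),\phi_0,\ldots,\phi_k)\mapsto\tsum{i=0}k\langle(\chi-\project S(a))^i/i!,\phi_i\rangle$; for the lower bound you take near-minimisers $y_j\in B_j$, note that $\project{S_j}(y_j)$ is the graph parameter of $y_j$ (since the $\phi_{i,j}$ take values in $S_j^\perp$), extract a convergent subsequence, and identify the limit as a point of $B$. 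The paper instead writes $B_j=h_j\lIm B\rIm$ for explicit bijections $h_j=g_j^{-1}\circ f_j\circ g$, where $f_j\in\mathbf{O}(\adim)$ carries $S$ to $S_j$ with $f_j\to\id{\rel^\adim}$ (using the homogeneous-space structure of the Grassmannian) and $g,g_j$ are the shear maps subtracting the polynomials, and then invokes the separate Lemma~\ref{lemma:convergence} on distance functions of images under maps converging to the identity. Your version is self-contained and more elementary, at the price of the subsequence and boundedness bookkeeping (note you should say explicitly why the $y_j$ stay bounded, e.g.\ because $a_j\in B_j$ and $a_j\to a$, and that the liminf is attained along a subsequence); the paper's version is more modular, reusing the auxiliary convergence lemma and the orthogonal-group action. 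Your treatment of closedness of $C$ (intersection of conditions continuous in $(a,S,\phi_i)$, with $\dim$ locally constant on $G$) matches what the paper dismisses as trivial.
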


\begin{proof}
	The first statement is trivial.

	To prove the second statement, suppose $(a_j,S_j,\phi_{0,j}, \ldots,
	\phi_{k,j} )$ is a sequence in $C$ converging to $(a,S,\phi_0, \ldots,
	\phi_k ) \in C$ as $j \to \infty$, $P_j$ and $P$ are the associated
	polynomial functions, and
	\begin{equation*}
		B_j = \{ \chi + P_j (\chi) \with \chi \in S_j \}, \quad B = \{
		\chi + P ( \chi ) \with \chi \in S \}.
	\end{equation*}
	One may assume that $S_j \in \grass \adim {\dim S}$ for every positive
	integer $j$.  Hence, as $\mathbf{O} (n)$ operates on the homogeneous
	space $\grass \adim {\dim S}$ by a transitive left action, see
	\cite[\hyperlink{2_7_1}{2.7.1},
	\hyperlink{3_2_28}{3.2.28}\,(2)\,(4)]{MR41:1976}, there exists a
	sequence $f_j \in \mathbf{O} (n)$ with $f_j \lIm S \rIm = S_j$ and
	$f_j \to \id{V}$ as $j \to \infty$.  Define $g_j : V \to V$ and $g : V
	\to V$ by
	\begin{equation*}
		g_j (x) = x - P_j ( \project{(S_j)} (x)), \quad g(x) =
		x-P(\project S(x))
	\end{equation*}
	whenever $x \in V$ and $j$ is a positive integer and notice
	that $g_j$ map $V$ univalently onto $V$ and
	\begin{equation*}
		g_j^{-1} (x) = x + P_j ( \project{(S_j)}(x)) \quad \text{for
		$x \in V$}.
	\end{equation*}
	Since $B_j = h_j \lIm B \rIm$ for $h_j = g_j^{-1} \circ f_j \circ
	g$, applying \ref{lemma:convergence} yields the conclusion.
\end{proof}

\begin{theorem} \label{thm:derivative_Borel}
	Suppose $k$ and $\adim$ are positive integers, $0 \leq \alpha \leq 1$,
	$\gamma = k$ if $\alpha = 0$ and $\gamma = (k,\alpha)$ if $\alpha
	> 0$, $A \subset \rel^\adim$, and $X$ is the set of $a \in \rel^\adim$
	such that $A$ is pointwise [strongly pointwise] differentiable of
	order~$\gamma$ at $a$.

	Then $\pt \Der^k A$ is a Borel function whose domain is a Borel set
	and $X$ is a Borel set.
\end{theorem}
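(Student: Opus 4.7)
The strategy is to imitate the proof of \ref{thm:pt_diff_functions} in the set setting, using \ref{lemma:varying_poly_fcts} to cope with the polynomial approximant living on the varying plane $S$. Write $V = \rel^\adim$, $G = \bigcup_{\vdim = 0}^\adim \grass \adim \vdim$, and let $C$ and $\delta : C \to \rel^V$ be as in \ref{lemma:varying_poly_fcts}, so that $C$ is closed and $\delta$ is continuous when $\rel^V$ carries the product topology.

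For positive integers $i, j$ I would introduce closed subsets of $C$ that quantify the conditions of \ref{def:diff_sets}; for instance, the set of those $(a, S, \phi_0, \ldots, \phi_k) \in C$ with $a \in \Clos A$ and
\begin{equation*}
	\sup | \dist ( \cdot, A ) - \delta (a, S, \phi_0, \ldots, \phi_k) | \lIm \oball ar \rIm \leq r/i \quad \text{for $0 < r < 1/j$},
\end{equation*}
plus companion sets encoding $\sup \delta ( a, S, \phi_0, \ldots, \phi_k) \lIm A \cap \oball ar \rIm \leq r^k/i$ on the same range of $r$ (or the analogous inequality $\leq i r^{k+\alpha}$ on $\oball a{1/i}$ in the case $\alpha > 0$), together with the two-sided analogues in the strong case. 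Closedness of each reduces, via pointwise continuity of $\delta$ and the triangle inequality, to the slack provided by the strict inequality $0 < r < 1/j$, exactly as in the proof of \ref{thm:diff_order_one}. Taking countable intersections and unions in the pattern of the proof of \ref{thm:pt_diff_functions} then produces a Borel set $F \subset C$ whose members are precisely the tuples for which $B = \{ \chi + P(\chi) \with \chi \in S \}$ witnesses pointwise [strongly pointwise] differentiability of order~$\gamma$ of $A$ at $a$.

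To extract the content of the theorem I would intersect $F$ with the Borel constraints $a \in \dmn \tau$, $\dim S = \dim \tau(a)$, and $S^\perp \cap \tau(a) = \{ 0 \}$ --- the first two from \ref{thm:diff_order_one}, the third from \ref{miniremark:planes_over_planes} --- to obtain a Borel set $F_0$. By the uniqueness of the $k$~jet of the witnessing function asserted in \ref{thm:eq_diff_sets}, for each $(a, S)$ occurring in $F_0$ the entries $\phi_0, \ldots, \phi_k$ are uniquely determined, hence the projection $(a, S, \phi_0, \ldots, \phi_k) \mapsto (a, S, \phi_k)$ restricted to $F_0$ is univalent; then \cite[\hyperlink{2_2_10}{2.2.10}]{MR41:1976} implies that its image, which equals the graph of $\pt \Der^k A$, is a Borel subset of $V \times G \times \bigodot^k ( V, V )$, and \ref{thm:borel_maps} yields that $\pt \Der^k A$ is a Borel function with Borel domain. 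For $X$, I would intersect $F_0$ further with the Borel graph $\{ (a, S, \phi) \with S = \tau(a) \}$ of $\tau$ and project univalently onto the first coordinate, again invoking \cite[\hyperlink{2_2_10}{2.2.10}]{MR41:1976}.

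The principal technical obstacle is ensuring closedness of the quantitative approximation sets despite the moving plane $S$; this is precisely the role of \ref{lemma:varying_poly_fcts}, which guarantees that $\delta$ depends continuously on the full tuple $(a, S, \phi_0, \ldots, \phi_k)$ even though the domain of the underlying polynomial itself shifts with $S$. Once this is established, the descriptive-set-theoretic skeleton of the proof of \ref{thm:pt_diff_functions} carries over with only bookkeeping adjustments.
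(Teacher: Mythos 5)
Your proposal is correct and is essentially the paper's own argument: the same closed sets built from the distance functions $\delta$ of \ref{lemma:varying_poly_fcts}, the same countable intersections and unions producing a Borel set of witnessing tuples $(a,S,\phi_0,\ldots,\phi_k)$, and the same descriptive-set-theoretic step (\ref{thm:borel_maps} and \cite[\hyperlink{2_2_10}{2.2.10}]{MR41:1976}, combined with \ref{thm:diff_order_one}) to pass to $\pt \Der^k A$ and to $X$; intersecting with the constraints on $\tau$ is harmless but redundant, since members of your $F$ automatically satisfy them. One small correction: when $\alpha > 0$, or in the bracketed case, the projected image of your $F_0$ is only the graph of $\pt \Der^k A$ restricted to $\{ (a,S) \with a \in X \}$, so for the assertion about $\pt \Der^k A$ itself you should use the unbracketed $\alpha = 0$ instance of your construction, where (as the paper notes) the condition $a \in X$ is redundant and the image is the full graph.
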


\begin{proof}
	Let $C$ and $\delta$ as in \ref{lemma:varying_poly_fcts}.  Whenever
	$i$ and $j$ are positive integers define $C_{i,j}$ to consist of those
	$(a,S,\phi_0,\ldots,\phi_k) \in C$ with $a \in \Clos A$ satisfying
	\begin{equation*}
		\sup | \dist ( \cdot, A ) - \delta (a,S,\phi_0, \ldots,
		\phi_k) | \lIm \oball ar \rIm \leq r/i \quad \text{for $0 < r
		< 1/j$},
	\end{equation*}
	$D_{i,j}$ to consist of those $(a,S,\phi_0,\ldots,\phi_k) \in C$ with
	$a \in \Clos A$ satisfying
	\begin{gather*}
		\sup \delta (a,S,\phi_0, \ldots, \phi_k) \lIm A \cap \oball ar
		\rIm \leq r^k/i \quad \text{for $0 < r < 1/j$}, \\
		\left [ \sup | \dist ( \cdot, A ) - \delta (a,S,\phi_0,
		\ldots, \phi_k) | \lIm \oball ar \rIm \leq r^k/i \quad
		\text{for $0 < r < 1/j$}, \right ]
	\end{gather*}
	and $E_i$ to consist of those $(a,S,\phi_0,\ldots,\phi_k) \in C$ with
	$a \in \Clos A$ satisfying
	\begin{gather*}
		\sup \delta ( a,S, \phi_0, \ldots, \phi_k ) \lIm A \cap \oball
		ar \rIm \leq ir^k \quad \text{for $0 < r < 1/i$}. \\
		\left [ \sup | \dist (\cdot, A) -\delta ( a,S, \phi_0, \ldots,
		\phi_k ) | \lIm \oball ar \rIm \leq ir^k \quad
		\text{for $0 < r < 1/i$}. \right ]
	\end{gather*}
	Furthermore, define
	\begin{equation*}
		F = \big \{ \big (a,S, \pt \Der^0 A (a,S), \ldots, \pt \Der^k
		A(a,S) \big ) \with \text{$(a,S) \in \dmn \pt \Der^k A$, $a
		\in X$} \big \}
	\end{equation*}
	and notice that the condition $a \in X$ is redundant in the
	unbracketed case if $\alpha = 0$.  Since the sets $C_{i,j}$,
	$D_{i,j}$, and $E_i$ are closed by \ref{lemma:varying_poly_fcts} and
	\begin{gather*}
		F = \bigcap_{i=1}^\infty \bigcup_{j=1}^\infty ( C_{i,j} \cap
		D_{i,j}) \quad \text{if $\alpha = 0$}, \qquad
		F = \left ( \bigcap_{i=1}^\infty \bigcup_{j=1}^\infty C_{i,j}
		\right ) \cap \bigcup_{i=1}^\infty E_i  \quad \text{if $\alpha
		> 0$}
	\end{gather*}
	by \ref{remark:same_tangent_cone}, \ref{miniremark:cone}, and
	\ref{thm:eq_diff_sets}\,\eqref{item:eq_diff_sets:add_manifold}, $F$ is
	a Borel set. Consequently, \ref{thm:borel_maps} implies that $\pt
	\Der^i A$ are Borel functions whose domains are Borel sets for $i = 0,
	\ldots, k$.  As
	\begin{gather*}
		X = \big \{ a \with (a,\tau(a), \pt \Der^0 A ( a, \tau (a) ),
		\ldots, \pt \Der^k A ( a, \tau (a) ) ) \in F \big \},
	\end{gather*}
	one may use \ref{thm:diff_order_one} to deduce that $X$ is a Borel
	set.
\end{proof}

\begin{lemma} \label{lemma:subset_pt_diff_order_gamma}
	Suppose $k$ and $\adim$ are positive integers, $\vdim$ is an integer,
	$0 \leq \vdim \leq \adim$, $0 \leq \alpha \leq 1$, $\gamma = k$ if
	$\alpha = 0$, and $\gamma = (k,\alpha)$ if $\alpha > 0$, $A \subset
	\rel^\adim$, $a \in \Clos A$, the set $A$ is pointwise differentiable
	of order~$\gamma$ at $a$ with $\Tan (A,a) \in \grass \adim \vdim$, $D
	\subset S \in \grass \adim \vdim$, $f : S \to S^\perp$ is of
	class~$1$, $x = \project S (a)$, the set $D$ is pointwise
	differentiable of order~$1$ at $x$ with $\Tan (D,x) = S$, and
	\begin{equation*}
		B = \{ \chi + f (\chi) \with \chi \in D \} \subset \Clos A.
	\end{equation*}

	Then $B$ is pointwise differentiable of order~$\gamma$ at $a$ and
	\begin{gather*}
		\pt \Der^i A (a,\cdot) = \pt \Der^i B (a,\cdot) \quad
		\text{for $i = 0, \ldots, k$}.
	\end{gather*}
\end{lemma}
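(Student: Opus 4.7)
The strategy is to exhibit a single class-$\gamma$ submanifold $M$ that simultaneously realises the pointwise differentiability of order~$\gamma$ at~$a$ of both $A$ and $B$ in the sense of \ref{def:diff_sets}, and to match the differentials via the uniqueness clause in \ref{thm:eq_diff_sets}. The natural choice is $M = \{ \chi + h(\chi) \with \chi \in S \}$, where $h : S \to S^\perp$ is the class-$\gamma$ function supplied by applying \ref{thm:eq_diff_sets} to $A$ at $(a,S)$.

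I would begin with the tangential preliminaries. Since $B$ is locally a $C^1$-graph over $D$ with $\Tan(D, x) = S$, one has $a = x + f(x) \in \Clos B$ and $\Tan(B, a) = ( \id S + \Der f(x) ) \lIm S \rIm$, a $\vdim$-dimensional plane transverse to $S^\perp$. Combined with $\Tan(A, a) \in \grass \adim \vdim$, the inclusion $B \subset \Clos A$ gives $\Tan(B, a) \subset \Tan(A, a)$, so dimension counting forces $\Tan(A, a) = \Tan(B, a)$ and $S^\perp \cap \Tan(A, a) = \{0\}$. In view of \ref{miniremark:cone} one may restrict to a neighbourhood of~$a$ in which $A$ lies in a cone over $S$ centred at~$a$. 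Then \ref{thm:eq_diff_sets}\,\eqref{item:eq_diff_sets:polynomial} yields a class-$\gamma$ function $h : S \to S^\perp$, with $k$-jet $P$ at~$x$ encoding the differentials of $A$, for which
\begin{equation*}
	\sup \big \{ | \perpproject{S}(\eta) - h ( \project S (\eta) ) | \with \eta \in A,\ | \project S ( \eta ) - x | \leq r \big \}
\end{equation*}
is $o(r^k)$ if $\alpha = 0$ and $O(r^{k+\alpha})$ if $\alpha > 0$.

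Next I would verify the two conditions of \ref{def:diff_sets} for the pair $(B, M)$. The one-sided bound $\sup \dist ( \cdot, M ) \lIm B \cap \cball ar \rIm$ transfers verbatim from the corresponding bound for $A$ via $B \subset \Clos A$ and continuity of $\dist (\cdot, M)$. For the two-sided $o(r)$-bound, \ref{miniremark:diff_sets} reduces matters to showing $\sup \dist ( \cdot, B ) \lIm M \cap \oball a{2r} \rIm = o(r)$: given $\chi + h(\chi) \in M$ with $|\chi - x| \leq 2r$, pick $\chi' \in D$ realising $\dist(\chi, D)$, which is $o(r)$ by the order-$1$ pointwise differentiability of $D$ at~$x$ (cf.\ \ref{remark:char_pt_diff_order_1}), and estimate
\begin{equation*}
	| ( \chi + h(\chi) ) - ( \chi' + f(\chi') ) | \leq ( 1 + \Lip ( h | \cball{x}R ) ) | \chi - \chi' | + | h(\chi') - f (\chi') |
\end{equation*}
for a suitable fixed $R > 0$; the second term is $o(r^k)$ by substituting $\eta = \chi' + f(\chi') \in B \subset \Clos A$ into the displayed bound above, hence $o(r)$ since $k \geq 1$.

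Consequently $M$ realises pointwise differentiability of order~$\gamma$ of $B$ at~$a$, and since the same class-$\gamma$ function $h$ represents both $A$ and $B$ near~$a$, \ref{thm:eq_diff_sets}\,\eqref{item:eq_diff_sets:add_manifold} combined with \ref{remark:uniqueness_derivative_sets} gives $\pt \Der^i A ( a, \cdot ) = \pt \Der^i B ( a, \cdot )$ for $i = 0, \ldots, k$. The chief obstacle is the tangential preliminary in the second paragraph, namely extracting $a = x + f(x)$ and the transversality $S^\perp \cap \Tan(A, a) = \{0\}$ from the dimensional and inclusion hypotheses (the apparent circularity between $a \in \Clos B$ and $\Tan(B,a) \subset \Tan(A,a)$ has to be resolved by an argument internal to $B$ using only the graph structure and $\Tan(D,x) = S$); the remainder is a systematic bookkeeping chain involving \ref{def:diff_sets}, \ref{miniremark:diff_sets}, and \ref{thm:eq_diff_sets}.
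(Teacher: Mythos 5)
Your argument is correct in substance and follows essentially the same route as the paper: both compare $A$ and $B$ to a single graph over $S$ carrying the $k$~jet of $A$ at $(a,S)$, get the one-sided closeness of $B$ to that graph from $B \subset \Clos A$, supply the remaining first-order information from the order-$1$ differentiability of $D$ at $x$, and match the differentials by jet uniqueness (\ref{thm:poly_uniqueness} via \ref{thm:eq_diff_sets}); the two-sided $o(r)$ estimate you re-derive by hand is exactly what the paper obtains by citing \ref{thm:eq_diff_sets}\,\eqref{item:eq_diff_sets:add_half_manifold} for the pairs $(A,C)$ and $(B,C)$, with $C$ the jet graph, after \ref{corollary:cmp_fct_set} settles the tangential facts.

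One clarification on your ``chief obstacle'': the relation $a = x + f(x)$ cannot be extracted from the stated hypotheses, so the internal argument you hope for does not exist. For instance, with $\adim = 2$, $\vdim = 1$, $S = \rel \times \{0\}$, $A = (\rel \times \{0\}) \cup (\rel \times \{1\})$, $a = (0,0)$, $D = S$, and $f$ constantly equal to $(0,1)$, every hypothesis of the lemma holds, yet $a \notin \Clos B$, so $B$ is not pointwise differentiable at $a$. Thus $a \in \Clos B$ (equivalently $\perpproject{S}(a) = f(x)$) is a tacit hypothesis: it is automatic in the applications in \ref{thm:rademacher_sets}, where $a \in B$, and the paper's own proof takes $a = x + f(x)$ for granted when it invokes \ref{corollary:cmp_fct_set}. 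You should simply assume it rather than try to prove it; with that understanding, your tangential paragraph (the inclusion $\Der g(x) \lIm S \rIm \subset \Tan(B,a) \subset \Tan(A,a)$ for $g(\chi) = \chi + f(\chi)$, forcing equality and $S^\perp \cap \Tan(A,a) = \{0\}$) and the subsequent estimates are sound.
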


\begin{proof}
	Possibly replacing $A$ by $\Clos A$, one may assume $A$ to be closed,
	hence $a \in A$ and $B \subset A$.  Noting $x \in \Clos D$ by
	\ref{remark:same_tangent_cone}, one may also assume $x \in D$ possibly
	replacing $D$ by $D \cup \{ x \}$.  Define
	\begin{equation*}
		C = \big \{ \chi + \tsum{i=0}k \langle ( \chi-x )^i/i!, \pt
		\Der^i A (a,S) \rangle \with \chi \in S \big \}.
	\end{equation*}
	Applying \ref{corollary:cmp_fct_set} with $k$, $\alpha$, $X$, $f$, and
	$A$ replaced by $1$, $0$, $D$, $f | D$, and $B$, one obtains that $B$
	is pointwise differentiable of order~$1$ at~$a$, $\Tan ( B,a ) \in
	\grass \adim \vdim$, and $S^\perp \cap \Tan (B,a) = \{ 0 \}$, hence
	$\Tan (A,a) = \Tan (B,a)$.  In particular, one may assume that
	\begin{equation*}
		A \subset \big \{ \chi \with | \perpproject{S} ( \chi-a ) |
		\leq \kappa | \project S ( \chi-a ) | \big \} \quad \text{for
		some $0 \leq \kappa < \infty$}
	\end{equation*}
	by \ref{miniremark:cone}.  Therefore, noting
	\ref{remark:agreement_strong_normal}, twice applying
	\ref{thm:eq_diff_sets}\,\eqref{item:eq_diff_sets:add_half_manifold}
	with $(A,B)$ replaced by $(A,C)$ and $(B,C)$ respectively yields the
	conclusion.
\end{proof}

\begin{theorem} \label{thm:rademacher_sets}
	Suppose $k$ and $\adim$ are positive integers, $\vdim$ is an integer,
	$0 \leq \vdim \leq \adim$, $A \subset \rel^\adim$, $0 \leq \alpha \leq
	1$, $\gamma = k$ if $\alpha = 0$ and $\gamma = (k,\alpha)$ if $\alpha
	> 0$, and $X$ is the set of $a \in \rel^\adim$ such that $A$ is
	pointwise [strongly pointwise] differentiable of order $\gamma$ at $a$
	with $\dim \Tan (A,a) = \vdim$.

	Then the following three statements hold.
	\begin{enumerate}
		\item \label{item:rademacher_sets:approx} There exists a
		countable collection of $\vdim$~dimensional submanifolds
		of class~$\gamma$ of $\rel^\adim$ covering $\mathscr{H}^\vdim$
		almost all of $X$.
		\item \label{item:rademacher_sets:closeness} If $B$ is an
		$\vdim$ dimensional submanifold of class~$\gamma$ of
		$\rel^\adim$, then $\mathscr{H}^\vdim$ almost all $a \in B
		\cap X$ satisfy $\pt \Der^i A (a,\cdot) = \pt \Der^i B
		(a,\cdot)$ for $i = 0, \ldots, k$ and
		\begin{gather*}
			\lim_{r \to 0+} r^{-k-\alpha} \sup \dist ( \cdot, B )
			\lIm A \cap \cball ar \rIm =0. \\
			\left [ \lim_{r \to 0+} r^{-k-\alpha} \sup | \dist (
			\cdot, A ) - \dist ( \cdot, B ) | \lIm \cball ar \rIm
			= 0. \right ]
		\end{gather*}
		\item \label{item:rademacher_sets:points} If $\alpha = 1$,
		then $A$ is pointwise [strongly pointwise] differentiable of
		order~$k+1$ at $\mathscr{H}^\vdim$ almost all $a \in X$.
	\end{enumerate}
\end{theorem}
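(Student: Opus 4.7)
The plan mirrors the proof of Theorem~\ref{thm:pt_diff_functions}, using the graphical characterisation of set-differentiability in Theorem~\ref{thm:eq_diff_sets} and Corollary~\ref{corollary:cmp_fct_set} as the bridge from sets to functions, and relying on the Borel properties from Theorem~\ref{thm:derivative_Borel} together with the rectifiability structure of Theorem~\ref{thm:diff_order_one}.

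For \eqref{item:rademacher_sets:approx}, Theorem~\ref{thm:diff_order_one} shows that $X$ is countably $\vdim$ rectifiable, so a countable decomposition via a dense subset of $\grass \adim \vdim$ covers $\mathscr{H}^\vdim$ almost all of $X$ by Lipschitzian graphs $B_j = \{ \chi + f_j(\chi) \with \chi \in S_j \}$, where each $f_j : S_j \to S_j^\perp$ is Lipschitzian after Kirszbraun extension from the relevant rectifiable piece. Lemma~\ref{lemma:subset_pt_diff_order_gamma} transfers pointwise differentiability of order $\gamma$ with matching differentials from $A$ to each $B_j$ at $\mathscr{H}^\vdim$ almost every $a \in B_j \cap X$; Corollary~\ref{corollary:cmp_fct_set} together with Remark~\ref{remark:cmp_fct_set} then translates this into pointwise differentiability of order~$\gamma$ of $f_j$ at the corresponding projected points. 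Applying Theorem~\ref{thm:pt_diff_functions}\,\eqref{item:pt_diff_functions:rect} to each $f_j$ furnishes a countable family of class-$\gamma$ functions whose graphs are the required submanifolds.

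For \eqref{item:rademacher_sets:closeness}, let $B$ be an $\vdim$ dimensional submanifold of class $\gamma$; at $\mathscr{H}^\vdim$ almost every $a \in B \cap X$, Example~\ref{example:subset_of_plane} applied to $B \cap X$ combined with Remark~\ref{remark:same_tangent_cone} and Theorem~\ref{thm:eq_diff_sets} forces $\Tan(A,a) = \Tan(B,a) = S$ for some $S \in \grass \adim \vdim$. Writing $B$ locally as $\{ \chi + g(\chi) \with \chi \in S \}$ with $g$ of class $\gamma$, apply Theorem~\ref{thm:eq_diff_sets}\,\eqref{item:eq_diff_sets:add_fct} and Theorem~\ref{thm:poly_uniqueness} to identify the $k$~jet of $g$ at $\project S(a)$ with the polynomial associated with $A$ at $(a,S)$, yielding $\pt \Der^i A(a, \cdot) = \pt \Der^i B(a, \cdot)$ for $i = 0, \ldots, k$. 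To sharpen the $O(r^{k+\alpha})$ estimate to $o(r^{k+\alpha})$ when $\alpha > 0$, apply Theorem~\ref{thm:big_O_little_o} with $l = k+\alpha$ to each of a countable family of non-negative auxiliary functions
\begin{equation*}
	y \mapsto \sup \dist (\cdot, B_j) \lIm A \cap \cball yr \rIm
\end{equation*}
(replaced by $y \mapsto \sup | \dist(\cdot, A) - \dist(\cdot, B_j) | \lIm \cball yr \rIm$ in the bracketed strong case), indexed over positive rational $r$ and the class-$\gamma$ submanifolds $B_j$ from \eqref{item:rademacher_sets:approx}, whose ``$0$ or $\infty$'' dichotomy delivers the little-$o$ conclusion $\mathscr{H}^\vdim$ almost everywhere on $B \cap X$. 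Then \eqref{item:rademacher_sets:points} follows from \eqref{item:rademacher_sets:approx} and \eqref{item:rademacher_sets:closeness} by upgrading class-$(k,1)$ submanifolds to class-$(k+1)$ ones via the Lusin-type approximation \cite[\hyperlink{3_1_15}{3.1.15}]{MR41:1976}, as in \cite[1.6]{MR2414427}.

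The principal obstacle is the little-$o$ sharpening in \eqref{item:rademacher_sets:closeness}: whereas Theorem~\ref{thm:pt_diff_functions}\,\eqref{item:pt_diff_functions:little_o} applies Theorem~\ref{thm:big_O_little_o} directly to the single function $|f-g|$, for sets no single map encodes the relative position of $A$ and $B$. Selecting the countable family of distance-type auxiliary functions tied to the approximating submanifolds from \eqref{item:rademacher_sets:approx} rich enough that their simultaneous dichotomy settles the scaling behaviour at $\mathscr{H}^\vdim$ almost every $a \in B \cap X$, together with verifying the Borel nature required for measurability, is the technically delicate point.
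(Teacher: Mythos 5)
The little-$o$ step in your treatment of \eqref{item:rademacher_sets:closeness} --- which you yourself flag as the delicate point --- does not work as described, and this is the heart of the theorem. Theorem~\ref{thm:big_O_little_o} applies to a single function on $\rel^\vdim$ and its dichotomy is in the radius of the ball about the \emph{varying centre}; in your auxiliary functions $y \mapsto \sup \dist(\cdot,B_j)\lIm A\cap\cball yr\rIm$ the radius $r$ is a fixed (rational) index. For fixed $r$ one has $\sup g_{j,r}\lIm\oball as\rIm \geq g_{j,r}(a)=\sup\dist(\cdot,B_j)\lIm A\cap\cball ar\rIm$, a constant in $s$, so $\limsup_{s\to 0+}s^{-k-\alpha}\sup g_{j,r}\lIm\oball as\rIm$ is automatically $\infty$ unless that constant vanishes; the dichotomy is vacuous and says nothing about the behaviour as $r\to 0+$, which is what \eqref{item:rademacher_sets:closeness} asserts. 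What is actually needed is an auxiliary function on the $\vdim$~plane whose sup over $\cball{\project S(a)}{r}$ encodes the deviation of $A$ from $B$ in $\cball a{cr}$, so that the radius in \ref{thm:big_O_little_o} coincides with the radius in the conclusion; and one must discard the sheets of $A$ lying over the same projected point but far from the graph, since otherwise the fibrewise supremum is not even $O(r^{k+\alpha})$ at the relevant points. The paper does this by writing $B$ locally as the graph of $f:S\to S^\perp$, truncating $A$ vertically to $A_j=A\cap\{\chi\with|\perpproject S(\chi)-f(\project S(\chi))|<1/(2j)\}$, introducing cone--condition sets $C_j$ and base sets $D_j\subset S$, and taking $g_j(x)$ to be the supremum of $|\perpproject S(\chi)-f(\project S(\chi))|$ over the fibre $A_j\cap\project S^{-1}\lIm\{x\}\rIm$ (augmented by $\dist(x+f(x),A_j)$ in the bracketed case); \ref{thm:eq_diff_sets} gives the finite limsup on $D_j$, \ref{thm:big_O_little_o} upgrades it almost everywhere, and since $A_j$ is a relative neighbourhood in $A$ the estimate transfers back to $\dist(\cdot,B)$ via \ref{miniremark:diff_sets}. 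Note also that \eqref{item:rademacher_sets:closeness} concerns an arbitrary given $B$; routing through the manifolds $B_j$ of \eqref{item:rademacher_sets:approx} would require a further argument transferring the little-$o$ from $\dist(\cdot,B_j)$ to $\dist(\cdot,B)$, which you do not supply and which is essentially a statement of the same kind.

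There is also a gap in \eqref{item:rademacher_sets:approx}: Remark~\ref{remark:cmp_fct_set} only applies when the base set is a neighbourhood in $S$, whereas your set $D=\project S\lIm B_j\cap X\rIm$ is merely a rectifiable piece; the Kirszbraun extension $f_j$ is just Lipschitzian and need not be pointwise differentiable of order~$\gamma$ at the projected points, so Theorem~\ref{thm:pt_diff_functions}\,\eqref{item:pt_diff_functions:rect} cannot be invoked. The repair is the paper's route: apply \ref{thm:isakov} directly to $f_j$ restricted to the measurable set $D$ with the jet data $x\mapsto\pt\Der^i A(x+f_j(x),S)$, whose measurability is exactly what \ref{thm:derivative_Borel} supplies, and then conclude with \cite[\hyperlink{3_2_29}{3.2.29}]{MR41:1976}. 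Your jet identification via \ref{lemma:subset_pt_diff_order_gamma}, \ref{corollary:cmp_fct_set}, and \ref{thm:poly_uniqueness}, and your deduction of \eqref{item:rademacher_sets:points} from \eqref{item:rademacher_sets:approx} and \eqref{item:rademacher_sets:closeness} via \cite[\hyperlink{3_1_15}{3.1.15}]{MR41:1976}, follow the paper and are fine.
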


\begin{proof}
	Notice that $X$ is a countably $\vdim$ rectifiable Borel set by
	\ref{thm:diff_order_one} and \ref{thm:derivative_Borel} and $X \subset
	\Clos A$ by \ref{remark:same_tangent_cone}.  In view of
	\ref{remark:diff_sets_zero_dim} one may assume $\vdim > 0$.

	In order to prove \eqref{item:rademacher_sets:approx}, suppose $S \in
	\grass \adim \vdim$, $f : S \to S^\perp$ is of class~$1$, and define
	Borel sets $B = X \cap \{ \chi + f ( \chi ) \with \chi \in S \}$ and
	$D = S \cap \{ \chi \with \chi + f ( \chi ) \in X \}$.  If $a \in B$,
	$x = \project S(a)$, and $D$ is pointwise differentiable of order~$1$
	at~$x$ with $\Tan (D,x)=S$, then the set $B$ is pointwise
	differentiable of order~$\gamma$ at $a$ and
	\begin{equation*}
		\pt \Der^i A ( a, \cdot ) = \pt \Der^i B ( a, \cdot ) \quad
		\text{for $i = 0, \ldots, k$}
	\end{equation*}
	by
	\ref{lemma:subset_pt_diff_order_gamma}, hence
	\ref{corollary:cmp_fct_set} with $x$, $X$, $f$, and $A$ replaced by
	$\project S (a)$, $D$, $f|D$, and $B$ yields
	\begin{gather*}
		\lim_{r \to 0+} r^{-k} \sup | f-P_x | \lIm D \cap \cball x r
		\rIm = 0 \quad \text{if $\alpha = 0$}, \\
		\limsup_{r \to 0+} r^{-k-\alpha} \sup | f-P_x | \lIm D \cap
		\cball x r \rIm < \infty \quad \text{if $\alpha > 0$},
	\end{gather*}
	where $P_x : S \to S^\perp$ and $P_x ( \chi ) = \sum_{i=0}^k \langle (
	\chi - x )^i/i!, \pt \Der^i A (a,S) \rangle$ for $\chi \in S$.  Since
	these conditions hold for $\mathscr{H}^\vdim$ almost all $a \in B$
	with $x = \project S(a)$ by \ref{example:subset_of_plane}, there exist
	$g_j : S \to S^\perp$ of class~$\gamma$ such that
	\begin{equation*}
		{\textstyle \mathscr{H}^\vdim \big ( D \without
		\bigcup_{j=1}^\infty \{ x \with f(x)=g_j(x) \} \big ) = 0}
	\end{equation*}
	by \ref{thm:isakov} and \ref{thm:derivative_Borel}.  Consequently,
	\eqref{item:rademacher_sets:approx} follows from
	\cite[\hyperlink{3_2_29}{3.2.29}]{MR41:1976}.
	
	In order to prove \eqref{item:rademacher_sets:closeness}, it is
	sufficient to consider $B = \{ \chi + f ( \chi ) \with \chi \in S \}$
	corresponding to $S \in \grass \adim \vdim$ and $f : S \to S^\perp$ of
	class~$\gamma$ with $\Lip f < \infty$ by \ref{remark:hoelder_ok} and
	\cite[\hyperlink{3_1_19}{3.1.19}\,(5)]{MR41:1976}.  Define
	\begin{equation*}
		Y = ( B \cap X ) \cap \big \{ y \with \text{$\pt \Der^i A ( y,
		\cdot ) = \pt \Der^i B ( y, \cdot )$ for $i = 0, \ldots, k$}
		\big \}
	\end{equation*}
	and $D = S \cap \{ x \with x + f(x) \in X \}$.  In view of
	\ref{example:subset_of_plane}, twice applying
	\ref{lemma:subset_pt_diff_order_gamma} with $(A,B)$ replaced by $\big
	(A,B \cap \project{S}^{-1} \lIm D \rIm \big )$ and $\big (B,B \cap
	\project{S}^{-1} \lIm D \rIm \big )$ respectively yields
	\begin{equation*}
		\{ x + f (x) \with x \in S, \density^\vdim (
		\mathscr{H}^\vdim \restrict S \without \Clos D, x ) = 0 \}
		\subset Y, \quad \mathscr{H}^\vdim ( B \cap X \without Y ) =
		0.
	\end{equation*}
	Let $\kappa = \Lip f$ and $\lambda = 2^{-1} (1+\kappa)^{-1}$.  For
	each positive integer $j$ define
	\begin{equation*}
		C_j = \rel^\adim \cap \Big \{ a \with A \cap \oball a{1/j}
		\subset \big \{ \chi \with | \perpproject S ( \chi-a) | \leq j
		| \project{S} (\chi-a) | \big \} \Big \}
	\end{equation*}
	and notice that $Y \subset \bigcup_{j=1}^\infty C_j$ by
	\ref{miniremark:cone}. Next, define for $j = 1, 2, 3, \ldots$ sets
	\begin{equation*}
		A_j = A \cap \big \{ \chi \with | \perpproject S(\chi) - f (
		\project S ( \chi ) ) | < 1/(2j) \big \}, \quad D_j = \{ x
		\with x + f(x) \in C_j \cap Y \}
	\end{equation*}
	and functions $g_j : S \to \{ t \with 0 \leq t \leq \infty
	\}$ by
	\begin{multline*}
		g_j(x) = h(x), \qquad \Big [ g_j(x)= \sup \{ h(x), \dist
		(x+f(x),A_j) \} , \Big ] \\
		\text{where $h(x) = \sup \big ( \{ 0 \} \cup \{ |
		\perpproject{S} (\chi)-f( \project S ( \chi ) ) | \with \chi
		\in A_j \cap \project S^{-1} \lIm \{ x \} \rIm \} \big )$},
	\end{multline*}
	for $x \in S$.  The proof of \eqref{item:rademacher_sets:closeness}
	will be concluded by showing that for each positive integer $j$ the
	conclusion of \eqref{item:rademacher_sets:closeness} holds at $x+f(x)$
	for $\mathscr{H}^\vdim$ almost all $x \in D_j$.
	
	Evidently, the set $A_j$ is pointwise [strongly pointwise]
	differentiable of order~$\gamma$ at $y$ with $\pt \Der^i A_j (y,
	\cdot) = \pt \Der^i B(y,\cdot)$ for $i = 0, \ldots, k$ whenever $y \in
	Y$ and $j$ is a positive integer.  Next, it will be shown that
	\begin{equation*}
		A_j \cap \project S^{-1} \lIm \cball x{\lambda/j} \rIm \subset
		\big \{ \chi \with | \perpproject S ( \chi-a ) | \leq j |
		\project S ( \chi - a ) | \big \}
	\end{equation*}
	whenever $x \in D_j$, $a = x+f(x)$, and $j$ is a positive integer; in
	fact, if $\chi \in A_j$ and $| \project S ( \chi ) - x | \leq
	\lambda/j$, then defining $b = \project S ( \chi ) + f ( \project
	S ( \chi ) )$ one estimates
	\begin{equation*}
		|b-a| \leq ( 1 + \kappa ) | \project S (b-a) | \leq 1/(2j),
		\quad |\chi-a| \leq | \chi - b | + | b - a |  < 1/j,
	\end{equation*}
	hence $| \perpproject{S} ( \chi-a ) | \leq j | \project S ( \chi - a )
	|$ as $a \in C_j$.  Applying
	\ref{thm:eq_diff_sets}\,\eqref{item:eq_diff_sets:add_fct}
	$\big
	[$\ref{thm:eq_diff_sets}\,\eqref{item:eq_diff_sets:add_manifold}\,\eqref{item:eq_diff_sets:add_fct}$\big]$
	with $\kappa$, $A$ and $a$ replaced by $j$, $A_j \cap \project{S}^{-1}
	\lIm \cball x{\lambda/j} \rIm$ and $x+f(x)$ then yields
	\begin{equation*}
		\limsup_{r \to 0+} r^{-k-\alpha} \sup g_j \lIm \cball xr \rIm
		< \infty \quad \text{whenever $x \in D_j$ and $j = 1, 2, 3,
		\ldots$}
	\end{equation*}
	implying, by \ref{thm:big_O_little_o}, that
	\begin{equation*}
		\lim_{r \to 0+} r^{-k-\alpha} \sup g_j \lIm \cball xr \rIm = 0
		\quad \text{for $\mathscr{H}^\vdim$ almost all $x \in D_j$}
	\end{equation*}
	whenever $j$ is a positive integer.  Since
	\begin{gather*}
		\sup \dist ( \cdot, B ) \lIm A_j \cap \cball ar \rIm \leq \sup
		g_j \lIm \cball {\project{S} (a)} r \rIm \\
		\Big [ \sup \big ( \dist ( \cdot, B ) \lIm A_j \cap \cball ar
		\rIm \cup \dist ( \cdot, A_j ) \lIm B \cap \cball ar \rIm \big
		) \leq \sup g_j \lIm \cball {\project{S} (a)} r \rIm \Big ]
	\end{gather*}
	for $a \in C_j \cap Y$ and $0 < r < \infty$ and $A_j$ is a
	neighbourhood of $a$ relative to $A$ for such $a$, the conclusion of
	\eqref{item:rademacher_sets:closeness} now follows noting
	\ref{miniremark:diff_sets}.

	To prove \eqref{item:rademacher_sets:points}, consider $a \in B \cap
	X$ satisfying the conditions of \eqref{item:rademacher_sets:closeness}
	with respect to an $\vdim$ dimensional submanifold $B$ of class~$k+1$
	of $\rel^\adim$, as $\mathscr{H}^\vdim$ almost all $a \in X$ do by
	\eqref{item:rademacher_sets:approx},
	\cite[\hyperlink{3_1_15}{3.1.15}]{MR41:1976}, and
	\eqref{item:rademacher_sets:closeness}.  These conditions imply
	firstly that
	\begin{equation*}
		\lim_{r \to 0+} r^{-1} \sup | \dist (\cdot,A) - \dist (\cdot,
		B ) | \lIm \cball ar \rIm = 0
	\end{equation*}
	by \ref{miniremark:cone} and
	\ref{thm:eq_diff_sets}\,\eqref{item:eq_diff_sets:add_manifold} with
	$B$ replaced by $\Tan (A,a)$, and then that $A$ is pointwise [strongly
	pointwise] differentiable of order $k+1$ at $a$.
\end{proof}

\begin{remark}
	In the terminology of Anzellotti and Serapioni
	\eqref{item:rademacher_sets:approx}~states that $X$ is the union of a
	countable family whose members are
	``$(\mathscr{H}^\vdim,\vdim)$~rectifiable of
	class~$\mathscr{C}^{k,\alpha}$'', see \cite[3.1]{MR1285779}.
\end{remark}

\section{Approximate versus pointwise differentiation} \label{sect:example}

In this section it is shown in \ref{example:hesitate_to_vanish} that
approximate differentiability of every positive integer order does not entail
almost everywhere pointwise differentiability of order strictly larger than
$k$ for functions of class~$k$.  This will be used to contrast a result on
varifolds, see \ref{corollary:approx_implies_pt} and
\ref{remark:better_than_c2}.  The main lemma in this regard is
\ref{lemma:hesitate_to_vanish} which provides for every closed set and every
modulus of continuity a function of class~$k$ which realises the closed set as
its zero set and whose decay near that set is controlled from above and below
by the given modulus of continuity.

In this section, as in \cite[p.~993]{MR3528825}, each statement asserting
the existence of a number $\Gamma$ will give rise to a function depending on
the parameters determining it whose name is $\Gamma_{\text{x.y}}$, where x.y
denotes the number of the statement.

\begin{lemma} \label{lemma:cz_distance}
	Suppose $A$ is a closed subset of of $\rel^\vdim$, $\delta :
	\rel^\vdim \to \rel$ satisfies $\delta (x) = \dist (x,A)$ for $x \in
	\rel^\vdim$, $U = \{ x \with \delta (x) < 1 \}$, and $k$ is a positive
	integer.

	Then there exists a function $g : U \without A \to \rel$ of class
	$\infty$ such that
	\begin{gather*}
		\Gamma^{-1} \leq g(x)/\delta (x) \leq 1 \quad \text{and} \quad
		\big \| \Der^i g (x) \big \| \leq \Gamma \delta(x)^{1-i}
		\quad \text{for $i = 0, \ldots, k$}
	\end{gather*}
	whenever $x \in U \without A$, where $1 \leq \Gamma < \infty$ is
	determined by $k$ and $\vdim$.
\end{lemma}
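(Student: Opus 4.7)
The plan is to regularise $\delta$ through a classical Whitney construction: decompose $\rel^\vdim \without A$ into essentially disjoint closed cubes whose sizes are comparable to their distances from $A$, and form a smoothed version of $\delta$ by averaging against a subordinate partition of unity; see for instance Stein \cite{MR607898} for the prototype of this argument.

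More precisely, one first invokes the Whitney decomposition to obtain a countable family of closed cubes $\{Q_j\}$ with pairwise disjoint interiors covering $\rel^\vdim \without A$ and for which there exist $0 < c_1 \leq c_2 < \infty$ depending only on $\vdim$ with $c_1 \, \mathrm{diam}(Q_j) \leq \dist(Q_j, A) \leq c_2 \, \mathrm{diam}(Q_j)$ for every $j$. Next, I would construct a smooth partition of unity $\{\phi_j\}$ of $\rel^\vdim \without A$ subordinate to slight dilates $Q_j^\ast$ of these cubes, satisfying, for some $N = N(\vdim)$ and $C = C(k,\vdim)$,
\begin{equation*}
	\sum_j \phi_j = 1 \text{ on } \rel^\vdim \without A, \quad \| \Der^i \phi_j (x) \| \leq C \, \mathrm{diam}(Q_j)^{-i} \text{ for $i = 0, \ldots, k$,}
\end{equation*}
with no more than $N$ of the $\phi_j$ being nonzero at any given point. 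Selecting for each $j$ a reference point $x_j \in Q_j$, define
\begin{equation*}
	g(x) = c \sum_j \phi_j (x) \, \delta (x_j) \quad \text{for } x \in U \without A,
\end{equation*}
with $c = c(\vdim) \in (0,1]$ a normalising factor to be chosen.

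For the two-sided comparability of $g$ and $\delta$, the Whitney property together with the $1$-Lipschitzianity of $\delta$ imply $\delta (x_j)/\delta (x) \in [ c', C' ]$ for constants $c', C' > 0$ depending only on $\vdim$ whenever $x \in \spt \phi_j$, which after taking $c = 1/C'$ yields $\Gamma^{-1} \leq g/\delta \leq 1$. For the derivative bounds with $i \geq 1$, I would exploit the key identity $\sum_j \Der^i \phi_j (x) = \Der^i ( \sum_j \phi_j )(x) = 0$ to rewrite
\begin{equation*}
	\Der^i g(x) = c \sum_j \Der^i \phi_j (x) \, ( \delta (x_j) - \delta (x) ),
\end{equation*}
and then estimate $| \delta (x_j) - \delta (x) | \leq |x_j - x| \leq C'' \delta (x)$ together with $\mathrm{diam}(Q_j)^{-i} \leq C''' \delta (x)^{-i}$, which jointly yield $\| \Der^i g(x) \| \leq \Gamma \delta (x)^{1-i}$ with $\Gamma$ depending only on $k$ and $\vdim$.

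The main obstacle here is bookkeeping rather than conceptual: one needs to fix the normalising constant $c$ so that the sharp upper bound $g \leq \delta$ holds (rather than merely $g \leq \Gamma \delta$) while simultaneously preserving the lower bound and the derivative estimates. Since every estimate above scales linearly in $c$, this is achieved by the explicit choice $c = 1/C'$ indicated above, and smoothness of $g$ of class $\infty$ follows from the fact that only finitely many $\phi_j$ are nonzero near any point of $U \without A$.
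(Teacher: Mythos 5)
Your construction is correct and is essentially the paper's proof: the paper disposes of this lemma by quoting the regularised distance of Ziemer \cite[3.6.1]{MR1014685}, whose proof is exactly the Whitney cube decomposition with a subordinate partition of unity, the identity $\sum_j \Der^i \phi_j = 0$ for $i \geq 1$, and a normalisation to force $g \leq \delta$. So you have reproduced the proof of the cited result rather than found a different route.
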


\begin{proof}
	This is immediate from \cite[3.6.1]{MR1014685}.
\end{proof}

\begin{definition}
	$\omega$ is termed \emph{modulus of continuity} if and only if it is a
	function $\omega : \{ t \with 0 \leq t \leq 1 \} \to \{ t \with 0 \leq
	t \leq 1 \}$ satisfying
	\begin{gather*}
		\lim_{t \to 0+} \omega (t) = 0, \qquad \text{$\omega (t) = 0$
		if and only if $t=0$ whenever $0 \leq t \leq 1$}, \\
		\text{$\omega(s) \leq \omega(t)$ whenever $0 \leq s \leq t
		\leq 1$}.
	\end{gather*}
\end{definition}

\begin{lemma} \label{lemma:modulus_of_continuity}
	Suppose $\omega$ is a modulus of continuity.

	Then there exists a modulus of continuity $\psi$ such that $\psi | \{
	t \with t > 0 \}$ is of class~$\infty$ relative to $\{ t \with 0 < t
	\leq 1 \}$ and $\omega (t/4) \leq \psi (t) \leq \omega (t)$ for $0
	\leq t \leq 1$.%
	\begin{footnote}
		{If $A \subset \rel$ and $f : A \to \rel$ then $f$ is of class
		$\infty$ relative to $A$ if and only if there exist an open
		subset $U$ of $\rel$ and $g : U \to \rel$ of class $\infty$
		with $A \subset U$ and $f=g|A$, see \cite[3.1.22]{MR41:1976}.}
	\end{footnote}
\end{lemma}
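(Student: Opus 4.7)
The plan is to regularise $\omega$ by a convolution on the multiplicative (equivalently, logarithmic) scale, so that $\psi$ inherits monotonicity from $\omega$ while gaining smoothness from the mollifying kernel.

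To implement this, I would fix $\rho$ of class~$\infty$ on $\rel$ with $\rho \geq 0$, $\spt \rho \subset \{ s \with 0 < s < \log 2 \}$, and $\int \rho(s) \ud s = 1$. After extending $\omega$ to $\{ t \with 0 \leq t < \infty \}$ by $\omega(t) = \omega(1)$ for $t \geq 1$, I would set $\psi(0) = 0$ and
\begin{equation*}
\psi(t) = \int \omega(te^{-s}) \rho(s) \ud s \quad \text{whenever $0 < t \leq 1$}.
\end{equation*}
For $s \in \spt \rho$ and $t \in \{ r \with 0 < r \leq 1 \}$, one has $t e^{-s} \in \{ r \with t/2 < r < t \}$, so monotonicity of $\omega$ yields
\begin{equation*}
\omega(t/4) \leq \omega(t/2) \leq \psi(t) \leq \omega(t) \leq 1.
\end{equation*}
This sandwich bound gives $\psi(t) \in \{ r \with 0 \leq r \leq 1 \}$ with $\psi(t) > 0$ if and only if $t > 0$, and $\psi(t) \to 0$ as $t \to 0+$; monotonicity of the integrand in $t$ moreover makes $\psi$ non-decreasing. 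Hence $\psi$ is a modulus of continuity satisfying the claimed inequality.

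For smoothness, the substitution $u = t e^{-s}$ rewrites the definition as
\begin{equation*}
\psi(t) = \int \omega(u) \rho(\log(t/u)) u^{-1} \ud u \quad \text{whenever $0 < t < \infty$},
\end{equation*}
where the integrand is compactly supported in $u \in \{ r \with t/2 < r < t \}$. Since $\rho$ is of class~$\infty$ and $\omega$ is bounded and measurable, repeated differentiation under the integral sign shows $\psi$ to be of class~$\infty$ on the open set $\{ t \with 0 < t < \infty \}$; taking this open set as $U$ and $\psi | U$ as $g$ supplies the extension required by the notion of class~$\infty$ relative to $\{ t \with 0 < t \leq 1 \}$.

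The principal subtlety is that $\omega$ is assumed only to be monotone, and need not be continuous away from $0$, so an additive mollification would not automatically respect the multiplicative sandwich $\omega(t/4) \leq \psi(t) \leq \omega(t)$. The logarithmic convolution above sidesteps this: the multiplicative bounds follow from the support condition on $\rho$, while $C^\infty$ regularity comes from the fact that, after the substitution, all $t$-dependence is absorbed into the smooth kernel $\rho(\log(t/u))$.
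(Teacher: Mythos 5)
Your proposal is correct. The bounds are justified: since the kernel is a probability density supported in $\{ s : 0 < s < \log 2 \}$, the argument $t e^{-s}$ stays in the interval between $t/2$ and $t$, so monotonicity of $\omega$ gives $\omega(t/2) \leq \psi(t) \leq \omega(t)$ (even slightly sharper than the required $\omega(t/4)$ lower bound), the remaining modulus-of-continuity properties follow from this sandwich together with $\psi(0)=0$ and pointwise monotonicity of the integrand, and your substitution $u = t e^{-s}$ correctly moves all $t$-dependence into the smooth kernel so that differentiation under the integral sign (legitimate because the effective support in $u$ stays in a fixed compact subset of $\{ t : t > 0 \}$ as $t$ varies near a given point, $\omega$ being bounded and Borel measurable as a monotone function) yields a class~$\infty$ extension on the open set $\{ t : t > 0 \}$. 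The paper proceeds differently in implementation though with the same underlying idea of regularising $\omega$ while shifting its argument only by a bounded multiplicative factor: it prescribes the values $\psi(2^{-i}) = \omega(2^{-i-1})$ at dyadic points and interpolates monotonically by a smooth partition of unity, after which the estimate $\omega(t/4) \leq \psi(t) \leq \omega(t)$ follows for $2^{-i-1} \leq t \leq 2^{-i}$ purely from monotonicity, with no integration or measurability considerations at all. Your logarithmic-scale convolution buys a cleaner closed formula, a sharper constant ($2$ in place of $4$), and an automatic smoothness proof, at the modest cost of invoking differentiation under the integral sign; the paper's dyadic interpolation is more elementary but leaves the construction of the interpolant as a routine exercise. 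Both arguments are complete and correct.
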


\begin{proof}
	Constructing (for instance by means of a partition of unity) a modulus
	of continuity $\psi$ such that $\psi | \{ t \with t > 0 \}$ is of
	class $\infty$ relative to $\{ t \with 0 < t \leq 1 \}$ and $\psi
	(2^{-i}) = \omega ( 2^{-i-1} )$ whenever $i$ is a nonnegative integer,
	one readily verifies the conclusion.
\end{proof}

\begin{lemma} \label{lemma:hesitate_to_vanish}
	Suppose $A$ is a closed subset of $\rel^\vdim$, $\delta : \rel^\vdim
	\to \rel$ satisfies $\delta (x) = \dist (x,A)$ for $x \in\rel^\vdim$,
	$U = \{ x \with \delta (x) < 1 \}$, $\omega$ is a modulus of
	continuity, and $k$ is a positive integer.

	Then there exists $f : U \to \rel$ of class $k$ such that $f | U
	\without A$ is of class $\infty$ and
	\begin{gather*}
		f(x) \geq \Gamma^{-1} \omega ( \delta(x)/\Gamma ) \delta(x)^k
		\quad \text{for $x \in U \without A$}, \\
		\Der^i f (a) = 0 \quad \text{for $a \in A$}, \qquad
		\big \| \Der^i f(x) \big \| \leq \Gamma
		\omega ( \delta (x) ) \delta(x)^{k-i} \quad \text{for $x \in U
		\without A$}
	\end{gather*}
	whenever $i = 0, \ldots, k$, where $1 \leq \Gamma < \infty$ is
	determined by $k$ and $\vdim$.
\end{lemma}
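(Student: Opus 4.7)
The plan is to define $f = (\psi \circ g) \cdot g^k$ on $U \setminus A$ and $f \equiv 0$ on $A$, where $g$ is the smooth regularisation of $\delta$ furnished by \ref{lemma:cz_distance} and $\psi$ is a smooth analogue of $\omega$ enjoying Mikhlin-type derivative bounds. First I would invoke \ref{lemma:cz_distance} to produce $g$ of class $\infty$ on $U \setminus A$ satisfying $\Gamma^{-1} \delta \leq g \leq \delta$ and $\|\Der^i g\| \leq \Gamma \delta^{1-i}$ for $i = 0, \ldots, k$. Next I would construct $\psi : [0,1] \to [0,1]$, smooth on $(0,1]$, with $\omega(t/4) \leq \psi(t) \leq \omega(t)$ and the sharper bound $|\psi^{(l)}(t)| \leq C_l t^{-l} \omega(t)$ for $l = 0, \ldots, k$: choosing a smooth dyadic partition of unity $\{\chi_i\}_{i \geq 0}$ of $(0,1]$ with $\spt \chi_i \subset [2^{-i-1}, 2^{-i+1}]$, $\chi_i(2^{-i}) = 1$, and $|\chi_i^{(l)}(t)| \leq C_l t^{-l}$, and setting $\psi(t) = \sum_{i \geq 0} \omega(2^{-i-1}) \chi_i(t)$, the derivative estimate follows because $\sum_i \chi_i^{(l)} \equiv 0$ for $l \geq 1$, forcing $\psi^{(l)}$ on the annulus $[2^{-i-1}, 2^{-i}]$ to depend only on the differences $\omega(2^{-i-1}) - \omega(2^{-i-2})$, each bounded by $\omega(t)$.

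On $U \setminus A$, Fa\`a di Bruno combined with the bounds on $\psi^{(l)}$ and $\Der^l g$ would give $\|\Der^j(\psi \circ g)\| \leq C_j \omega(\delta) \delta^{-j}$, while induction yields $\|\Der^j(g^k)\| \leq C_j \delta^{k-j}$ for $j = 0, \ldots, k$. Leibniz's rule applied to $f = (\psi \circ g) \cdot g^k$ then produces the required upper bound $\|\Der^i f\| \leq \Gamma \omega(\delta) \delta^{k-i}$. The lower bound $f(x) \geq \omega(g(x)/4) g(x)^k \geq \Gamma^{-1} \omega(\delta(x)/\Gamma) \delta(x)^k$ follows from $\psi(t) \geq \omega(t/4)$, the monotonicity of $\omega$, and the comparability of $g$ with $\delta$.

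The main obstacle is promoting $f$ to a function of class $k$ on all of $U$. Since $\omega(\delta(x)) \delta(x)^{k-i} \to 0$ as $\delta(x) \to 0$ for each $i = 0, \ldots, k$, extending $\Der^i f$ by $0$ on $A$ yields continuous functions on $U$. To verify that these extensions are the genuine differentials of $f$ on $A$, I would argue by induction on $i$: the case $i = 0$ is immediate from $|f(x)| \leq \Gamma \omega(|x-a|) |x-a|^k$ for $a \in A$; and assuming $\Der^{i-1} f$ exists continuously on $U$ and vanishes on $A$, differentiability of $\Der^{i-1} f$ at $a \in A$ with differential $0$ is obtained by integrating the continuously extended $\Der^i f$ along segments joining $a$ to nearby $x$, using that the extension vanishes on $A$ and decays to $0$ on $U \setminus A$ so that passage of the segment through $A$ does not spoil the estimate. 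The subtlety is precisely that such segments can intersect $A$ in a complicated set, so the standard mean value argument must be replaced by one leveraging the continuous extension of $\Der^i f$ across $A$.
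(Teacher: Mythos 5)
Your construction is correct, and it takes a genuinely different route from the paper's. The paper also builds on the regularised distance $g$ of \ref{lemma:cz_distance}, but instead of the product $(\psi\circ g)\cdot g^k$ it sets $f = h\circ g$, where $h$ is the $k$-fold antiderivative of a smoothed modulus furnished by \ref{lemma:modulus_of_continuity}, i.e.\ $h^{(k)}=\omega$ on $[0,1)$ and $h=0$ on $(-\infty,0]$; the integral representation of $h^{(i)}$ then gives $|h^{(i)}(y)|\le\omega(y)\,y^{k-i}/(k-i)!$ and $h(y)\ge 2^{-k}\omega(y/2)\,y^k/k!$ for free, so one application of the composition formula yields both bounds without any derivative estimates on the smoothed modulus. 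Your route needs the extra Mikhlin-type bounds $|\psi^{(l)}(t)|\le C_l\,t^{-l}\omega(t)$, which \ref{lemma:modulus_of_continuity} does not provide, but your dyadic partition-of-unity construction does deliver them (on $[2^{-i-1},2^{-i}]$ one has $\psi=\omega(2^{-i-2})+(\omega(2^{-i-1})-\omega(2^{-i-2}))\chi_i$, whence the estimate), and the Fa\`a di Bruno--Leibniz bookkeeping then gives the same upper and lower bounds; the paper's version is merely leaner, integration replacing your partition of unity.

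The one step to tighten is the passage to class $k$ across $A$. The segment-integration argument you sketch is both unnecessary and not justified as stated: recovering increments of $\Der^{i-1}f$ by integrating $\Der^{i}f$ along a segment meeting $A$ requires the restriction of $\Der^{i-1}f$ to the segment to be absolutely continuous, which continuity plus differentiability off a closed set does not give (Cantor-type behaviour must be excluded, e.g.\ by exploiting that $\Der^{i-1}f$ vanishes on $A$ to obtain a Lipschitz bound across the exceptional set). None of this is needed: since $\delta(x)\le|x-a|$ for $a\in A$, your bounds give
\begin{equation*}
	\| \Der^{i-1} f(x)-\Der^{i-1} f(a) \| \le \Gamma\,\omega(|x-a|)\,|x-a|^{k-i+1}=o(|x-a|) \quad \text{as $x\to a$}
\end{equation*}
(for $x\in A$ the left hand side vanishes by induction), which is precisely differentiability of $\Der^{i-1}f$ at $a$ with differential $0$; continuity of $\Der^{i}f$ at $a$ follows likewise because $\omega(\delta(x))\delta(x)^{k-i}\to 0$. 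This direct estimate is what the paper's ``inductively one infers'' refers to, and with it your proof is complete.
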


\begin{proof}
	In view of \ref{lemma:modulus_of_continuity} the problem reduces to
	the case that $\omega | \{ t \with t > 0 \}$ is of class $\infty$
	relative to $\{ t \with 0 < t \leq 1 \}$.
	
	In this case there exists a function $h : \{ y \with y < 1 \} \to
	\rel$ of class $k$ such that%
	\begin{footnote}
		{If $g$ maps a subset of $\rel$ into $\rel$ and $g$ is $k$
		times differentiable at $x$, then $g^{(k)}(x) \in \rel$
		denotes the $k$-th derivative of $g$ at $x$, see
		\cite[\hyperlink{3_1_11}{3.1.11}]{MR41:1976}.}
	\end{footnote}
	\begin{gather*}
		h(y) = 0 \quad \text{for $- \infty < y \leq 0$}, \qquad
		h^{(k)} (y) = \omega (y) \quad \text{for $0 \leq y < 1$}.
	\end{gather*}
	Then \cite[\hyperlink{3_1_11}{3.1.11}]{MR41:1976} implies
	\begin{gather*}
		h^{(i)} (y) =  \big ( y^{k-i}/(k-i)! \big ) \tint 01
		(k-i)(1-t)^{k-i-1} \omega (ty) \ud \mathscr{L}^1 \, t
	\end{gather*}
	for $0 \leq y < 1$ and $i = 0, \ldots, k$.  Consequently, one obtains
	the estimates
	\begin{gather*}
		h(y) \geq (y^k/k!) \omega (y/2) \tint {1/2}1 k(1-t)^{k-1} \ud
		\mathscr{L}^1 \, t = 2^{-k} \omega (y/2) y^k/k!, \\
		\big | h^{(i)} (y) \big | \leq \omega (y) y^{k-i}/(k-i)!
		\quad \text{for $i = 0, \ldots, k$}
	\end{gather*}
	whenever $0 \leq y < 1$.

	Next, choose $g$ as in \ref{lemma:cz_distance}, abbreviate $\Delta =
	\Gamma_{\ref{lemma:cz_distance}} ( k, \vdim )$, and define $f : U \to
	\rel$ by $f(a) = 0$ for $a \in A$ and $f(x)=h(g(x))$ for $x \in U
	\without A$.  Defining $S(i)$ to be the set of all $k$ termed
	sequences $\alpha$ of nonnegative integers with $\sum_{j=1}^k j
	\alpha_j = i$, one estimates, using \cite[\hyperlink{1_10_5}{1.10.5},
	\hyperlink{3_1_11}{3.1.11}]{MR41:1976}, the estimates for $h$, and
	\ref{lemma:cz_distance},
	\begin{align*}
		\big \| \Der^i f (x) \big \| /i! & \leq \sum_{\alpha \in S(i)}
		\big | h^{(\sum \alpha)} (g(x)) \big | \prod_{j=1}^k \big ( \|
		\Der^j g(x) \|/j! \big )^{\alpha_j}/(\alpha_j)! \\
		& \leq \omega (g(x)) \sum_{\alpha \in S(i)} \big (
		g(x)^{k-\sum \alpha} / (k-\tsum{}{} \alpha)! \big ) \big (
		\Delta^{\sum \alpha} / \alpha! \big ) \delta (x)^{(\sum
		\alpha)-i} \\
		& \leq \Delta_i \omega (\delta(x)) \delta(x)^{k-i},
	\end{align*}
	for $x \in U \without A$ and $i = 0, \ldots, k$, where $\Delta_i =
	\sum_{\alpha \in S(i)} \big ( \Delta^{\sum \alpha}/\alpha!  \big )
	\big / \big (k-\sum \alpha \big )!$.  Inductively one infers
	that $f$ is $i$ times differentiable with $\Der^i f(a) = 0$ for $a \in
	A$ and $\Der^i f$ is continuous for $i = 0, \ldots, k$.  Since the
	estimates for $h$ and \ref{lemma:cz_distance} yield
	\begin{gather*}
		f(x) \geq 2^{-k} \omega ( g(x)/2 ) g(x)^k/k! \geq
		(2\Delta)^{-k} \omega ( \delta (x)/ (2\Delta)) \delta (x)^k/k!
		\quad \text{for $x \in U \without A$},
	\end{gather*}
	one may take $\Gamma = \sup \big ( \{ (2 \Delta)^k k!\} \cup \{
	\Delta_i i! \with i = 0, \ldots, k \} \big )$ in the present case.
\end{proof}

\begin{miniremark} [see for instance \protect{\cite[7.8]{MR3625810}}]
	\label{miniremark:planes}
	If $\vdim$ is a positive integer, then $\unitmeasure \vdim \leq 6$.%
	\begin{footnote}
		{By definition $\unitmeasure \vdim =
		\measureball{\mathscr{L}^\vdim}{\cball 01}$, see
		\cite[2.7.16\,(1)]{MR41:1976}.}
	\end{footnote}
\end{miniremark}

\begin{miniremark} [see \protect{\cite[5.1.9]{MR41:1976}}]
	\label{miniremark:pp_and_qq}
	Suppose $\vdim$ is a positive integer and $\adim = \vdim+1$.  Then
	$\Mypp : \rel^\adim \to \rel^\vdim$ and $\Myqq : \rel^\adim \to \rel$
	satisfy $\Mypp ( z ) = (z_1, \ldots, z_\vdim )$ and $\Myqq (z) =
	z_\adim$ for $z = (z_1, \ldots, z_\adim ) \in \rel^\adim$.
\end{miniremark}

\begin{example} \label{example:hesitate_to_vanish}
	Suppose $k$ and $\vdim$ are positive integers.

	Then there exist a closed subset $A$ of $\rel^\vdim$ and a nonnegative
	function $f : \rel^\vdim \to \rel$ of class $k$ satisfying
	\begin{gather*}
		\Der^i f(a) = 0 \quad \text{whenever $a \in A$ and $i = 0,
		\ldots, k$}, \qquad \mathscr{L}^\vdim (A) > 0, \\
		\text{$\limsup_{x \to a} |x-a|^{-k-\alpha} f(x) = \infty$ for
		$0 < \alpha \leq 1$} \quad \text{for $\mathscr{L}^\vdim$
		almost all $a \in A$}.
	\end{gather*}
	In particular, if $0 < \alpha \leq 1$, then $\rel^\adim \cap \{ z
	\with \Myqq (z) = f ( \Mypp (z) ) \}$ is not pointwise differentiable of
	order $(k,\alpha)$ at $c$ for $\mathscr{H}^\vdim$ almost all $c \in
	\Mypp^\ast \lIm A \rIm$, see \ref{miniremark:pp_and_qq}.%
	\begin{footnote}
		{The adjoint linear map $\Mypp^\ast : \rel^\vdim \to \rel^\adim$
		associated to $\Mypp$ satisfies $\Mypp^\ast (x) = (x_1, \ldots,
		x_\vdim, 0 ) \in\rel^\adim$ for $x = (x_1, \ldots, x_\vdim )
		\in \rel^\vdim$, see \cite[1.7.4]{MR41:1976}.}
	\end{footnote}
\end{example}

\begin{proof} [Construction]
	Evidently, it is sufficient to prove the assertion with ``$f :
	\rel^\vdim \to \rel$'' replaced by ``$f : \{ x \with \dist (x,A) < 1 \}
	\to \rel$''.  Define a modulus of continuity $\omega$ by
	\begin{gather*}
		\omega (0) = 0, \qquad \omega (t) = ( 1+ \log (1/t) )^{-1}
		\quad \text{for $0 < t \leq 1$}.
	\end{gather*}
	By Kolasiński and the author \cite[2.5]{MR3625810}, there
	exist $B \subset \{ r \with r > 0 \}$, a Borel subset $X$ of
	$\rel^\vdim$, and a collection $F$ of open balls in $\rel^\vdim$ such
	that
	\begin{gather*}
		\inf B = 0, \quad \mathscr{L}^\vdim (X) > 0, \quad
		\mathscr{L}^\vdim ( ( \Clos X) \without X ) = 0,
	\end{gather*}
	and such that for $a \in X$ and $r \in B$
	there exists $U \in F$ with
	\begin{gather*}
		U \subset \oball ar \without X \quad \text{and} \quad
		\mathscr{L}^\vdim (U) \geq \omega (r) r^\vdim.
	\end{gather*}
	Define $A = \Clos X$, take $\delta$ and $f$ as in
	\ref{lemma:hesitate_to_vanish}, and let $\Delta = 6^k
	\Gamma_{\ref{lemma:hesitate_to_vanish}} (k,\vdim)$.  If $a \in
	X$, $r \in B$, and $U = \oball xs \in F$ are related as above then one
	estimates, using \ref{miniremark:planes},
	\begin{gather*}
		|x-a| \leq r, \quad \delta(x) \geq s \geq \omega (r)^{1/\vdim}
		\unitmeasure \vdim^{-1/\vdim} r \geq \omega (|x-a|)^{1/\vdim}
		|x-a|/6, \\
		f(x) \geq \Delta^{-1} \omega \big ( \omega (|x-a|)^{1/\vdim}
		|x-a|/\Delta \big ) \omega ( |x-a| )^{k/\vdim} |x-a|^k.
	\end{gather*}
	The principal conclusion now follows from the definition of $\omega$
	and the postscript is a consequence of \ref{corollary:cmp_fct_set}.
\end{proof}

\section{Intersecting a stationary varifold with a plane}
\label{sect:varifolds}

In this section a regularity property of the support of the weight measure of
a stationary integral varifold is proven in
\ref{thm:pointwise_approx_pt_diff}.  Namely, if near one of its points it is
contained in the union of a plane of the same dimension as the varifold and a
set with density zero at that point then it is strongly pointwise
differentiable of every positive integer order at that point.  This applies to
almost all points of the intersection with such a plane, see
\ref{corollary:approx_implies_pt}.  The differentiability condition obtained
not only encodes a vanishing phenomenon of infinite order but additionally
places a strong restriction on the size of ``holes'' the varifold may have
near that point both with respect to distance and Hausdorff measure, see
\ref{miniremark:cone},
\ref{thm:eq_diff_sets}\,\eqref{item:eq_diff_sets:add_fct}, and Kolasiński
and the author \cite[10.4, 11.7, 11.8]{MR3625810}.

\begin{lemma} \label{lemma:zero}
	Suppose $\vdim$ and $\adim$ are positive integers, $2 \leq \vdim \leq
	\adim$, $a \in \rel^\adim$, $0 < r < \infty$, $V \in \Var_\vdim (
	\oball ar )$, $\delta V =0$, $\density^\vdim ( \| V \|, x ) \geq 1$
	for $\| V \|$ almost all $x$, $T \in \grass \adim \vdim$, $0 < l
	< \infty$,
	\begin{gather*}
		\| V \| ( \oball as \without \{ x \with x-a \in T \} ) \leq
		2^{-\vdim-l} \unitmeasure \vdim s^\vdim \quad \text{for
		$0 < s \leq r$},
	\end{gather*}
	and $\psi (s) = \sup \{ \dist (x-a,T) \with x \in \oball as \cap \spt
	\| V \| \}$ for $0 < s \leq r$.

	Then there holds
	\begin{gather*}
		\psi (s) \leq 2^l (s/r)^l \psi (r) \quad \text{for
		$0 < s \leq r$}.
	\end{gather*}
\end{lemma}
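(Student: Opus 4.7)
The plan is to reduce the assertion to the one-step dyadic decay
\[
\psi(s) \leq 2^{-l} \psi(2s) \quad \text{for every } 0 < s \leq r/2,
\]
and then iterate. Granted such a decay, induction yields $\psi(r/2^j) \leq 2^{-jl}\psi(r)$ for every nonnegative integer $j$; for arbitrary $s \in (0,r]$ one picks $j \geq 0$ with $r/2^{j+1} < s \leq r/2^j$ (the case $s = r$ being trivial since $2^l \geq 1$), and then monotonicity of $\psi$ together with $s/r > 2^{-j-1}$ gives $\psi(s) \leq \psi(r/2^j) \leq 2^{-jl}\psi(r) \leq 2^l(s/r)^l\psi(r)$.

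To establish the one-step decay I would introduce the function $u : \rel^\adim \to \rel$ defined by $u(y) = \dist(y-a, T) = |\perpproject{T}(y-a)|$, which is nonnegative, $1$-Lipschitz, and convex. Because $V$ is stationary, the convexity of $u$ promotes it to a weak subsolution of the Laplace equation on $V$ in the sense required by Michael and Simon: the tangential Laplacian $\operatorname{tr}_S \Der^2 u$ is nonnegative for every $S \in \grass{\adim}{\vdim}$, and this passes to the distributional level against $\|V\|$ via $\delta V = 0$. Applying the local maximum estimate \cite[3.4]{MR0344978} on the ball $\oball{x_0}{s}$ for any $x_0 \in \spt\|V\| \cap \oball{a}{s}$ and any $0 < s \leq r/2$ (noting $\oball{x_0}{s} \subset \oball{a}{2s} \subset \oball{a}{r}$) gives an estimate of the form
\[
u(x_0) \leq \Lambda\, s^{-\vdim} \int_{\oball{x_0}{s}} u \, d\|V\|
\]
for a constant $\Lambda$ depending only on $\vdim$. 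Since $u$ vanishes on $a+T$ and $u \leq \psi(2s)$ throughout $\oball{a}{2s}$, the hypothesis yields
\[
\int_{\oball{x_0}{s}} u \, d\|V\| \leq \psi(2s)\,\|V\|\bigl(\oball{a}{2s} \setminus (a+T)\bigr) \leq \psi(2s) \cdot 2^{-\vdim-l}\unitmeasure{\vdim}(2s)^\vdim,
\]
so that $u(x_0) \leq \Lambda\, \unitmeasure{\vdim}\, 2^{-l}\,\psi(2s)$. Taking the supremum over $x_0 \in \spt\|V\| \cap \oball{a}{s}$ then produces $\psi(s) \leq \Lambda\,\unitmeasure{\vdim}\,2^{-l}\,\psi(2s)$.

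The main obstacle is to verify that the multiplicative factor $\Lambda\,\unitmeasure{\vdim}$ coming out of Michael and Simon's mean-value inequality can be taken to be at most $1$ for the convex subsolution $u$ on the stationary integral varifold $V$ with density at least $1$. The precise factor $2^{-\vdim-l}$ in the hypothesis is engineered so that the ratio $(2s)^\vdim/s^\vdim = 2^\vdim$ absorbs into the $\unitmeasure{\vdim}$-normalisation of the mean value and leaves exactly $2^{-l}$; matching this constant is what reduces the argument to the clean one-step decay. The paper's introductory remark that the present implementation is ``considerably simpler than previous approaches'' suggests that once the identification of $u$ as a subsolution and the correct sharp form of \cite[3.4]{MR0344978} (for stationary varifolds whose density-bound is supplied by Allard's monotonicity) are in hand, no further ingredient is required.
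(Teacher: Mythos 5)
You take essentially the same route as the paper: the convex function $\dist(\cdot-a,T)$ is a nonnegative subsolution on the stationary varifold, Michael and Simon's local maximum estimate \cite[3.4]{MR0344978} gives the mean-value bound, the measure hypothesis turns it into the one-step decay $\psi(s)\leq 2^{-l}\psi(2s)$, and dyadic iteration yields the stated inequality. The ``main obstacle'' you leave open is in fact settled by the citation itself: applying \cite[3.4]{MR0344978} with the mean curvature terms and their parameter $\Lambda$ replaced by $0$, and using the density hypothesis (which, by the monotonicity formula, holds at every point of $\spt\|V\|$), the constant is exactly $\unitmeasure{\vdim}^{-1}$, so the factor you call $\Lambda\,\unitmeasure{\vdim}$ equals $1$; this is precisely how the paper invokes it. The only detail you gloss over is that $\dist(\cdot-a,T)$ is not smooth, so the subsolution inequality is first verified for convex functions of class $\infty$ via Allard \cite[7.5\,(1)\,(2)]{MR0307015} and then obtained by convolution, as in the paper.
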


\begin{proof}
	If $f : \oball ar \to \rel$ is a convex function, then
	\begin{gather*}
		\tint{}{} f (x) D ( \grad \zeta ) (x) \bullet \project S
		\ud V\,(x,S) \geq 0 \quad \text{for $0 \leq \zeta \in
		\mathscr{D} ( \oball ar, \rel )$};
	\end{gather*}
	in fact, convolution reduces the problem to the case that $f$ is of
	class $\infty$ which follows from Allard
	\cite[7.5\,(1)\,(2)]{MR0307015}. Take $f(x) = \dist (x-a,T)$ for $x
	\in \oball ar$, abbreviating $\phi (s) = s^{-\vdim} \tint{\oball as}{}
	f \ud \| V \|$ for $0 < s \leq r$, and denote by $e_1, \ldots,
	e_\adim$ the standard orthonormal base of $\rel^\adim$.  Applying
	Michael and Simon \cite[3.4]{MR0344978} with $M$, $U$, $\mu$, $\tilde
	g^{ij}(x)$, $\mathscr{H}_i(x)$, $\chi$, $\Lambda$, $\xi$, and
	$\varrho$ replaced by $\oball ar$, $\oball ar$, $\| V \|$,
	$\project{\Tan^\vdim ( \| V \|, x)}(e_i) \bullet e_j$, $0$, $f$, $0$,
	$x$, and $s/2$ for $x \in \oball a{s/2} \cap \spt \| V \|$ implies
	\begin{gather*}
		\psi (s/2) \leq \unitmeasure{\vdim}^{-1} 2^\vdim \phi (s)
		\quad \text{for $0 < s \leq r$}.
	\end{gather*}
	Moreover, Hölder's inequality yields
	\begin{gather*}
		\phi (s) \leq 2^{-\vdim-l} \unitmeasure \vdim \psi (s)
		\quad \text{for $0 < s \leq r$}.
	\end{gather*}
	Together one obtains the conclusion; in fact, it is evident if $s \geq
	r/2$ and if it holds for some $0 < s \leq r$ then
	\begin{gather*}
		\psi (s/2) \leq 2^{-l} \psi (s) \leq (s/r)^l \psi
		(r)
	\end{gather*}
	and the conclusion holds for $s/2$.
\end{proof}

\begin{theorem} \label{thm:pointwise_approx_pt_diff}
	Suppose $\vdim$ and $\adim$ are positive integers, $2 \leq \vdim \leq
	\adim$, $U$ is an open subset of $\rel^\adim$, $V \in \IVar_\vdim ( U
	)$, $\delta V = 0$, and
	\begin{gather*}
		a \in \spt \| V \|, \quad T \in \grass \adim \vdim, \quad
		\density^\vdim ( \| V \| \restrict U \cap \{ x \with x-a
		\notin T \}, a ) = 0.
	\end{gather*}

	Then $\spt \| V \|$ is strongly pointwise differentiable of every
	positive integer order at $a$, $\Tan ( \spt \| V \|, a) = T$, and
	\begin{equation*}
		\lim_{s \to 0+} s^{-l} \tint{\oball as \times \grass \adim
		\vdim}{} \| \project S - \project T \|^2 \ud V \, (x,S) = 0
		\quad \text{for $0 < l < \infty$}.
	\end{equation*}
\end{theorem}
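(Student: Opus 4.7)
The plan is to iterate Lemma~\ref{lemma:zero} to extract height decay of arbitrarily high polynomial order, then combine this with stationarity and integrality to obtain both strong pointwise differentiability and the tilt excess decay. I fix $0 < l < \infty$. The hypothesis $\density^\vdim(\|V\| \restrict U \cap \{x \with x - a \notin T\}, a) = 0$ supplies some $0 < r < \infty$ with $\cball{a}{r} \subset U$ and
\begin{equation*}
    \|V\|(\oball{a}{s} \without \{x \with x - a \in T\}) \leq 2^{-\vdim-l} \unitmeasure{\vdim} s^\vdim \quad \text{for } 0 < s \leq r.
\end{equation*}
Since $V$ is integral and stationary, $\density^\vdim(\|V\|, \cdot) \geq 1$ holds $\|V\|$-almost everywhere, so Lemma~\ref{lemma:zero} applies to $V \restrict \oball{a}{r}$ and yields $\psi(s) \leq 2^l (s/r)^l \psi(r)$ for $0 < s \leq r$, where $\psi(s) = \sup\{\dist(x - a, T) \with x \in \oball{a}{s} \cap \spt \|V\|\}$. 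As $l$ was arbitrary, the height decays faster than any polynomial: $\psi(s) = o(s^l)$ as $s \to 0+$ for every positive $l$.

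Taking $B = a + T$, this immediately yields $\Tan(\spt\|V\|, a) \subset T$ together with the one-sided estimate $\lim_{r \to 0+} r^{-k} \sup \dist(\cdot, B) \lIm A \cap \cball{a}{r} \rIm = 0$ for every positive integer $k$, since $\dist(x, B) = \dist(x - a, T)$. The main obstacle is the reverse covering
\begin{equation*}
    \lim_{r \to 0+} r^{-k} \sup \dist(\cdot, A) \lIm B \cap \cball{a}{r} \rIm = 0,
\end{equation*}
which combined with \ref{miniremark:diff_sets} and the one-sided estimate furnishes strong pointwise differentiability of every order via \ref{def:diff_sets}. The height decay alone does not preclude $\spt\|V\|$ from missing large portions of $a + T$, so I would appeal to the Almgren multi-valued Lipschitz approximation from Kolasi{\'n}ski and the author \cite[10.4]{kol-men:decay.v2}: the integrality, stationarity, the height bound $\psi(s) = o(s^l)$, and the density lower bound provide its hypothesis and yield a multi-valued graph over $T$ covering $B \cap \cball{a}{r}$ up to error controlled by $\psi(r)$; this also secures the reverse inclusion $T \subset \Tan(\spt\|V\|, a)$.

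For the tilt excess $s^{-l} \int \|\project{S} - \project{T}\|^2 \, dV \to 0$, my plan is a Caccioppoli-type argument exploiting stationarity: testing the first variation of $V$ against an appropriate cutoff of the vector field $x \mapsto \perpproject{T}(x - a)$ produces a variational identity bounding the tilt excess in $\oball{a}{s/2}$ by a constant multiple of $s^{-2} \psi(s)^2 \|V\|(\oball{a}{s})$. Since $\psi$ decays faster than any polynomial and the mass $\|V\|(\oball{a}{s})$ grows at most like $s^\vdim$ by monotonicity, the tilt excess inherits the same faster-than-polynomial decay, completing the three claims.
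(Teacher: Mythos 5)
Your plan is essentially the paper's proof: iterate \ref{lemma:zero} over all $l$ to obtain super-polynomial height decay, control the ``holes'' via Kolasi{\'n}ski--Menne \cite[10.4]{kol-men:decay.v2}, and derive the tilt-excess decay from a tilt--height (Caccioppoli) estimate, which the paper simply quotes as Allard \cite[8.13]{MR0307015}; the only cosmetic difference is that the paper assembles strong pointwise differentiability of every order through \ref{thm:eq_diff_sets}\,\eqref{item:eq_diff_sets:add_half_manifold} applied to the projection $\project T \lIm \spt \| V \| \rIm$, whereas you combine the two one-sided distance estimates directly via \ref{miniremark:diff_sets}. The one point to adjust is the order of steps: in the paper the tilt decay and the convergence of the rescaled varifolds to an integer-multiplicity plane (Allard \cite[3.4, 4.6\,(3), 6.4]{MR0307015}) are established \emph{before} \cite[10.4]{kol-men:decay.v2} is invoked, since that result is fed by these data rather than by the height bound alone; as your Caccioppoli step needs only stationarity, the height decay and the monotonicity mass bound, you can simply perform it first and then quote \cite[10.4]{kol-men:decay.v2} with its actual hypotheses.
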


\begin{proof}
	Assume $a = 0$ and abbreviate $A = \spt \| V \|$.  Then
	\ref{lemma:zero} yields
	\begin{gather*}
		\lim_{s \to 0+} s^{-l} \sup \dist ( \cdot, T ) \lIm A \cap
		\cball 0s \rIm = 0 \quad \text{for $0 < l < \infty$},
	\end{gather*}
	in particular one may assume that $A \subset \big \{ \chi \with
	|\perpproject T (\chi) | \leq | \project T ( \chi ) | \big \}$.
	Notice that $1 \leq \density^\vdim ( \| V \|, 0 ) < \infty$ by Allard
	\cite[5.1\,(2), 8.6]{MR0307015}.  One infers
	\begin{gather*}
		\lim_{s \to 0+} s^{-\vdim-2(l-1)} \tint{\oball 0s \times
		\grass \adim \vdim}{} \| \project S - \project T \|^2 \ud
		V\,(x,S) = 0 \quad \text{for $0 < l < \infty$}
	\end{gather*}
	from Allard \cite[8.13]{MR0307015}.  This implies
	\begin{equation*}
		\lim_{s \to 0+} s^{-\vdim} \tint{}{} f (s^{-1}x,S) \ud V \,
		(x,S) = \density^\vdim ( \| V \|, 0 ) \tint T{} f(x,T) \ud
		\mathscr{H}^\vdim \, x
	\end{equation*}
	whenever $f : \rel^\adim \times \grass \adim \vdim \to \rel$ is a
	continuous function with compact support and that $\density^\vdim ( \|
	V \|, 0 )$ is an integer by Allard \cite[3.4, 4.6\,(3),
	6.4]{MR0307015}, hence one observes that
	\begin{gather*}
		\lim_{s \to 0+} s^{-\vdim-2(l-1)} \mathscr{H}^\vdim ( T \cap
		\cball 0s \without \project{T} \lIm A \rIm ) = 0, \\
		\lim_{s \to 0+} s^{-1-2(l-1)/\vdim} \sup \dist ( \cdot,
		\project{T} \lIm A \rIm ) \lIm T \cap \cball 0s \rIm = 0
	\end{gather*}
	for $0 < l < \infty$ by Kolasiński and the author
	\cite[10.4]{MR3625810}.  Therefore $\project T \lIm A \rIm$ is
	strongly pointwise differentiable of every positive integer order
	at~$0$ and $\Tan ( \project T \lIm A \rIm, 0 ) = T$ by
	\ref{miniremark:diff_sets} and \ref{remark:same_tangent_cone}.  Now,
	the conclusion follows from
	\ref{thm:eq_diff_sets}\,\eqref{item:eq_diff_sets:add_half_manifold}
	with $S$ and $B$ replaced by $T$ and $T$.
\end{proof}

\begin{corollary} \label{corollary:approx_implies_pt}
	Suppose $\vdim$ and $\adim$ are positive integers, $2 \leq \vdim \leq
	\adim$, $U$ is an open subset of $\rel^\adim$, $V \in \IVar_\vdim
	(U)$, $\delta V = 0$, and $T \in \grass \adim \vdim$.

	Then $\spt \| V \|$ is strongly pointwise differentiable of every
	positive integer order at $a$, $\Tan ( \spt \| V \|, a ) = T$, and
	\begin{equation*}
		\lim_{s \to 0+} s^{-l} \tint{\oball as \times \grass \adim
		\vdim}{} \| \project S - \project T \|^2 \ud V \, (x,S) = 0
		\quad \text{for $0 < l < \infty$}
	\end{equation*}
	for $\mathscr{H}^\vdim$ almost all $a \in T \cap U$.
\end{corollary}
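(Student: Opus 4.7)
The plan is to derive the corollary directly from Theorem~\ref{thm:pointwise_approx_pt_diff}. Since $a + T = T$ for every $a \in T$, one has $\{ x \with x - a \notin T \} = U \without T$, and therefore the hypothesis of Theorem~\ref{thm:pointwise_approx_pt_diff} at such an $a$ simplifies to
\begin{equation*}
	\density^\vdim \big ( \| V \| \restrict ( U \without T ), a \big ) = 0.
\end{equation*}
When this density condition holds at a point $a \in T \cap U \cap \spt \| V \|$, the theorem delivers precisely the three assertions of the corollary. Accordingly, the entire task reduces to verifying the preceding vanishing of the density at $\mathscr{H}^\vdim$ almost every $a \in T \cap U$.

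The key step is a classical density-measure comparison. Set $\sigma = \| V \| \restrict ( U \without T )$, which is a Radon measure on $U$ with $\sigma ( T ) = 0$. By Federer~\cite[\hyperlink{2_10_19}{2.10.19}\,(3)]{MR41:1976}, for each $t > 0$ the set
\begin{equation*}
	F_t = T \cap U \cap \big \{ a \with \limsup_{r \to 0+} r^{-\vdim} \sigma ( \cball ar ) \geq t \big \}
\end{equation*}
is $\mathscr{H}^\vdim$ measurable and satisfies $\mathscr{H}^\vdim ( F_t ) \leq \Gamma \, t^{-1} \sigma ( T ) = 0$ for some constant $\Gamma$ depending only on $\vdim$. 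Taking a countable union as $t$ runs through $\{ 1/i \with i = 1, 2, \ldots \}$ yields $\density^\vdim ( \sigma, a ) = 0$ at $\mathscr{H}^\vdim$ almost every $a \in T \cap U$, as required.

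Applying Theorem~\ref{thm:pointwise_approx_pt_diff} at each such point that additionally lies in $\spt \| V \|$---which is the only range in which the assertion $\Tan ( \spt \| V \|, a ) = T$ can possibly hold, consistently with the abstract's phrasing about the intersection of the support with the plane---then produces all three conclusions of the corollary. There is no genuine obstacle beyond having Theorem~\ref{thm:pointwise_approx_pt_diff} available: the deduction is essentially a one-line observation about the set $\{ x \with x - a \notin T \}$ combined with a standard differentiation estimate.
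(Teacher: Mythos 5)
Your proposal is correct and is essentially the paper's own argument: the printed proof just cites Federer 2.10.19\,(4) together with Theorem \ref{thm:pointwise_approx_pt_diff}, i.e.\ exactly your reduction to the vanishing of $\density^\vdim ( \| V \| \restrict ( U \without T ), a )$ at $\mathscr{H}^\vdim$ almost every $a \in T \cap U$ (applied at points of $\spt \| V \|$). Your covering estimate merely re-derives the content of that Federer item (modulo the routine localisation and outer-regularity step hidden in the inequality $\mathscr{H}^\vdim ( F_t ) \leq \Gamma t^{-1} \sigma ( T )$), so the two proofs coincide in substance.
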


\begin{proof}
	This is a consequence of
	\cite[\hyperlink{2_10_19}{2.10.19}\,(4)]{MR41:1976} and
	\ref{thm:pointwise_approx_pt_diff}.
\end{proof}

\begin{remark} \label{remark:better_than_c2}
	The behaviour of $\spt \| V \|$ exhibited in
	\ref{corollary:approx_implies_pt} is not shared by all closed $\vdim$
	dimensional submanifolds of $\rel^{\vdim+1}$ of class $2$ by
	\ref{example:hesitate_to_vanish} with $k=2$.
\end{remark}

\begin{remark}
	In case $\vdim = 1$ a complete description of the structure
	$\mathscr{H}^1$ almost everywhere of $\spt \| V \|$ was obtained by
	Allard and Almgren in \cite[p.~89]{MR0425741}.
\end{remark}

\appendix

\section{Items employed from Federer's treatise} \label{sect:appendix}

For the convenience of the reader, Table~\ref{table:fed69} provides a brief
list of the results employed from \cite{MR41:1976}.  Items which merely
provide background are not listed.

\begin{table}[htb]
	\begin{tabular}{|lp{285pt}|}
		\hline
		Number & Description \\
		\hline
		\raisebox{\ht\strutbox}{\hypertarget{1_10_2}}%
		{1.10.2} &  Algebra of symmetric forms. \\
		\raisebox{\ht\strutbox}{\hypertarget{1_10_4}}%
		{1.10.4} & Polynomial functions and Taylor's formula.
		\\
		\raisebox{\ht\strutbox}{\hypertarget{1_10_5}}%
		{1.10.5} & Estimates of seminorms related to symmetric
		forms. \\
		\raisebox{\ht\strutbox}{\hypertarget{2_2_7}}%
		{2.2.7} & Basic properties of Lipschitzian maps. \\
		\raisebox{\ht\strutbox}{\hypertarget{2_2_10}}%
		{2.2.10} & Mapping properties of Borel and Suslin
		sets, see also \cite[15.1]{MR1321597}. \\
		\raisebox{\ht\strutbox}{\hypertarget{2_7_1}}%
		{2.7.1} & Transitive left actions on homogeneous
		spaces.  \\
		\raisebox{\ht\strutbox}{\hypertarget{2_9_17}}%
		{2.9.17} & A differentiation theorem for general measures. \\
		\raisebox{\ht\strutbox}{\hypertarget{2_10_19}}%
		{2.10.19} & Properties of densities. \\
		\raisebox{\ht\strutbox}{\hypertarget{2_10_21}}%
		{2.10.21} & Includes Ascoli theorem for Lipschitzian
		functions.  \\
		\raisebox{\ht\strutbox}{\hypertarget{3_1_1}}%
		{3.1.1} & First order differentials. \\
		\raisebox{\ht\strutbox}{\hypertarget{3_1_11}}%
		{3.1.11} & Higher differentials: Taylor formula,
		$k$~jets, composition formula. \\
		\raisebox{\ht\strutbox}{\hypertarget{3_1_14}}%
		{3.1.14} & Whitney's extension theorem. \\
		\raisebox{\ht\strutbox}{\hypertarget{3_1_15}}%
		{3.1.15} & Lusin type approximation of functions of class
		$(k,1)$ by functions of class $k+1$, see also Whitney
		\cite[Theorem~4]{MR0043878}. \\
		\raisebox{\ht\strutbox}{\hypertarget{3_1_18}}%
		{3.1.18} & Consequences of the inverse function
		theorem. \\
		\raisebox{\ht\strutbox}{\hypertarget{3_1_19}}%
		{3.1.19} & Characterisations of submanifolds of
		Euclidean space. \\
		\raisebox{\ht\strutbox}{\hypertarget{3_1_21}}%
		{3.1.21} & Tangent cones and their mapping properties.
		\\
		\raisebox{\ht\strutbox}{\hypertarget{3_2_28}}%
		{3.2.28} & Includes Grassmann manifolds treated as homogeneous
		spaces.  \\
		\raisebox{\ht\strutbox}{\hypertarget{3_2_29}}%
		{3.2.29} & Characterising countably
		$(\mathscr{H}^\vdim,\vdim)$ rectifiable sets by
		coverings consisting of submanifolds of class~$1$. \\
		\raisebox{\ht\strutbox}{\hypertarget{3_3_5}}%
		{3.3.5} & A basic rectifiability lemma. \\
		\hline
	\end{tabular}
	\caption{Items employed from \protect{\cite{MR41:1976}}.}
	\label{table:fed69}
\end{table}

\medskip \noindent \textsc{Affiliations}

\medskip \noindent Institute of Mathematics, University of Leipzig \newline
Augustusplatz 10, 04109 \textsc{Leipzig}, \textsc{Germany} \smallskip \newline
Max Planck Institute for Mathematics in the Sciences \newline Inselstraße 22,
04103 \textsc{Leipzig}, \textsc{Germany}

\medskip \noindent \textsc{Email addresses}

\medskip \noindent
\href{mailto:Ulrich.Menne@math.uni-leipzig.de}{Ulrich.Menne@math.uni-leipzig.de}
\quad \href{mailto:Ulrich.Menne@mis.mpg.de}{Ulrich.Menne@mis.mpg.de}

\end{document}